\newtheorem{theorem}{Theorem}
\newtheorem{lemma}{Lemma}
\theoremstyle{remark}
\newtheorem{remark}{Remark}
\begin{document}
\date{}
\title{Numerical solution of nonlinear parabolic systems by block monotone iterations}
\author{Mohamed Al-Sultani
\\
Institute of Fundamental Sciences, Massey University,\\
 Palmerston North, New Zealand\\
E-mail: M.Al-Sultani@massey.ac.nz}\maketitle
\begin{abstract}
    This paper deals with  investigating numerical methods for solving  coupled system of nonlinear parabolic  problems. We utilize block monotone iterative methods based on   Jacobi and Gauss--Seidel methods to solve difference schemes which approximate the coupled system of nonlinear parabolic problems, where reaction functions are quasi-monotone nondecreasing or nonincreasing. In the view of upper and lower solutions method, two monotone upper and lower sequences of solutions are constructed, where the monotone property ensures the theorem on existence of solutions to problems with quasi-monotone  nondecreasing and nonincreasing  reaction functions. Construction of initial upper and lower solutions is presented. The sequences of solutions generated by the block Gauss--Seidel method converge not slower than by the block Jacobi method.
 \end{abstract}
\section{Introduction}
Several problems in the chemical, physical and engineering sciences are characterized by coupled systems of
nonlinear parabolic equations \cite{P92}. 
 In this paper, we construct  block monotone iterative methods for solving  the coupled system of nonlinear parabolic  equations
\begin{align}\label{bp1}
&u_{\alpha,t} -L_{\alpha}u_{\alpha}(x,y,t)+f_{\alpha}(x,y,t,u)=0,\quad (x,y,t)\in  Q_{T}=\omega \times (0,T], \\
&\alpha=1,2,\quad\omega=\{(x,y):0<x<l_{1},\quad 0<y<l_{2}\},\nonumber\\
 & u(x,y,t)=g(x,y,t),\quad (x,y,t)\in \partial Q_{T}, \nonumber \\
 &  u(x,y,0)=\psi(x,y),\quad (x,y)\in \overline{\omega}, \nonumber
\end{align}
where $u=(u_{1},u_{2})$, $g=(g_{1},g_{2})$, $f=(f_{1}, f_{2})$, $\psi=(\psi_{1},\psi_{2})$,  $\partial Q_{T}= \partial \omega\times (0,T]$, and $\partial \omega$
is the boundary of $\omega$. The differential operators $L_{\alpha}$, $\alpha=1,2$,  are defined by
\begin{align*}
 &L _{\alpha}u_{\alpha}(x,y,t)= \varepsilon_{\alpha}(u_{\alpha,xx}+u_{\alpha,yy})-v_{\alpha,1}(x,y,t)u_{\alpha,x}-v_{\alpha,2}(x,y,t)u_{\alpha,y},\\
  & \alpha=1,2,
\end{align*}
where
$\varepsilon_{\alpha}$, $\alpha=1,2$, are positive constants diffusion coefficients. It is assumed that the functions $f_{\alpha}$, $g_{\alpha}$, $v_{\alpha}$, $\alpha=1,2$,
are smooth in their respective domains.

The aim of this paper is to construct and  investigate  block monotone iterative methods
based on Jacobi and Gauss--Seidel methods  for solving coupled systems of nonlinear parabolic equations  with
quasi-monotone nondecreasing or quasi-monotone nonincreasing   reaction functions $f_{\alpha}$, $\alpha=1,2$,
which satisfy the inequalities
\begin{equation*}
-\frac{\partial f_{\alpha}}{\partial u_{\alpha^{\prime}}}\geq0,\quad (x,y,t)\in \overline{Q}_{T},\quad \alpha^{\prime}\neq \alpha,\quad \alpha, \alpha^{\prime}=1,2,
\end{equation*}
when $f_{\alpha}$, $\alpha=1,2$,  are  quasi-monotone nondecreasing, and
\begin{equation*}
-\frac{\partial f_{\alpha}}{\partial u_{\alpha^{\prime}}}\leq0,\quad (x,y,t)\in \overline{Q}_{T},\quad \alpha^{\prime}\neq \alpha,\quad \alpha, \alpha^{\prime}=1,2,
\end{equation*}
when $f_{\alpha}$, $\alpha=1,2$, are  quasi-monotone nonincreasing.
\section{Properties of solutions to system (\ref{bp1})}
 We introduce the following notation:
\begin{equation}\label{bp2}
  f_{\alpha}(x,y,t,u_{\alpha},u_{\alpha^{\prime}})=\left\{ \begin{array}{ll}
f_{1}(x,y,t,u_{1},u_{2}),\quad \alpha=1,\\
f_{2}(x,y,t,u_{1},u_{2}),\quad \alpha=2,
\end{array}\right.
\alpha\neq \alpha^{\prime}.
\end{equation}
 Two vector functions $\widetilde{u}(x,y,t)=(\widetilde{u}_{1},\widetilde{u}_{2})$ and
$\widehat{u}(x,y,t)=(\widehat{u}_{1},\widehat{u}_{2})$, are called ordered  upper and lower solutions to (\ref{bp1}), if they satisfy the inequalities
\begin{subequations}\label{bp3}
\begin{equation}\label{bp3a}
\widehat{u}(x,y,t)\leq \widetilde{u}(x,y,t),\quad(x,y,t)\in \overline{Q}_{T},
\end{equation}
\begin{equation}\label{bp3b}
\widehat{u}_{\alpha,t}-\mbox{L}_{\alpha}\widehat{u}_{\alpha}+f_{\alpha}(x,y,t,\widehat{u})\leq0 \leq \widetilde{u}_{\alpha,t} -\mbox{L}_{\alpha}\widetilde{u}_{\alpha}+f_{\alpha}(x,y,t,\widetilde{u}),        
\end{equation}
\begin{equation}\label{bp3c}
(x,y,t)\in Q_{T},\quad \widehat{u}(x,y,t)\leq g(x,y,t) \leq \widetilde{u}(x,y,t),\quad(x,y,t)\in \partial Q_{T},
\end{equation}
\begin{equation*}
\widehat{u}(x,y,0)\leq \psi(x,y) \leq \widetilde{u}(x,y,0),\quad (x,y)\in \overline{\omega},\quad \alpha=1,2,
\end{equation*}
\end{subequations}
when the reaction functions $f_{\alpha}$, $\alpha=1,2$, are quasi-monotone nondecreasing,
and if they satisfy the inequalities
\begin{subequations}\label{bp4}
\begin{equation}\label{bpa}
\widehat{u}(x,y,t)\leq \widetilde{u}(x,y,t),\quad(x,y,t)\in \overline{Q}_{T},
\end{equation}
\begin{equation}\label{bpb}
\widehat{u}_{\alpha,t}-\mbox{L}_{\alpha}\widehat{u}_{\alpha}+f_{\alpha}(x,y,t,\widehat{u}_{\alpha},\widetilde{u}_{\alpha^{\prime}})\leq0 \leq \widetilde{u}_{\alpha,t} -\mbox{L}_{\alpha}\widetilde{u}_{\alpha}+f_{\alpha}(x,y,t,\widetilde{u}_{\alpha},\widehat{u}_{\alpha^{\prime}}),        
\end{equation}
\begin{equation*}\label{bpc}
(x,y,t)\in Q_{T},\quad \widehat{u}(x,y,t)\leq g(x,y,t) \leq \widetilde{u}(x,y,t),\quad(x,y,t)\in \partial Q_{T},
\end{equation*}
\begin{equation*}
\widehat{u}(x,y,0)\leq \psi(x,y) \leq \widetilde{u}(x,y,0),\quad (x,y)\in \overline{\omega},\quad \alpha=1,2,
\end{equation*}
\end{subequations}
when the reaction functions $f_{\alpha}$, $\alpha=1,2$, are quasi-monotone   nonincreasing.

For a given ordered upper $\widetilde{u}$ and lower $\widehat{u}$ solutions, a sector $\langle \widehat{u}, \widetilde{u}\rangle$ is defined as follows:
\begin{equation*}
  \langle \widehat{u}, \widetilde{u}\rangle=\left\{u(x,y,t):\quad \widehat{u}(x,y,t)\leq u(x,y,t)\leq \widetilde{u}(x,y,t),\quad (x,y,t)\in \overline{Q}_{T} \right\}.
\end{equation*}
In the sector $\langle \widehat{u}, \widetilde{u}\rangle$, the vector  function $f(x,y,t,u)$ is assumed to satisfy the constraints
\begin{equation}\label{bp5}
  0\leq\frac{\partial f_{\alpha}(x,y,t,u)}{\partial u_{\alpha}}\leq c_{\alpha}(x,y,t),\quad u\in \langle \widehat{u},\widetilde{u}\rangle,\quad (x,y,t)\in \overline{Q}_{T},\quad \alpha=1,2,
\end{equation}
where $c_{\alpha}(x,y,t)$, $\alpha=1,2$, are nonnegative   bounded functions.

The vector function $f(x,y,t,u)$ is called quasi-monotone nondecreasing in the sector $\langle \widehat{u}, \widetilde{u}\rangle$ if it satisfies the conditions
\begin{equation}\label{bp6}
-\frac{\partial f_{\alpha}(x,y,t,u)}{\partial u_{\alpha^{\prime}}}\geq0,\quad u\in \langle \widehat{u},\widetilde{u}\rangle, \quad (x,y,t)\in \overline{Q}_{T},\quad \alpha^{\prime}\neq \alpha,\quad \alpha, \alpha^{\prime}=1,2,
\end{equation}
and $f(x,y,t,u)$ is called quasi-monotone nonincreasing if it satisfies the conditions
\begin{equation}\label{bp7}
-\frac{\partial f_{\alpha}(x,y,t,u)}{\partial u_{\alpha^{\prime}}}\leq0,\quad u\in \langle \widehat{u},\widetilde{u}\rangle, \quad (x,y,t)\in \overline{Q}_{T},\quad \alpha^{\prime}\neq \alpha,\quad \alpha, \alpha^{\prime}=1,2.
\end{equation}
\begin{theorem}
Let $\widetilde{u}$ and $\widehat{u}$ be ordered upper and lower solutions of problem (\ref{bp1}),
  $f$ in (\ref{bp1}) be quasi-monotone nondecreasing  (\ref{bp6}) or quasi-monotone nonincreasing   (\ref{bp7}) in the sector $\langle \widehat{u}, \widetilde{u}\rangle$ and satisfy (\ref{bp5}). Then problem (\ref{bp1}) has a unique solution in the sector $\langle \widehat{u}, \widetilde{u}\rangle$.
\end{theorem}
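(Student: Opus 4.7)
The plan is to establish existence via a monotone iteration built on $\widetilde u$ and $\widehat u$, and uniqueness via a maximum-principle/Gronwall argument applied to the difference of two solutions. The starting observation is that (\ref{bp1}) can be rewritten, for each $\alpha=1,2$, as
\[
u_{\alpha,t}-L_\alpha u_\alpha+c_\alpha u_\alpha \;=\; c_\alpha u_\alpha-f_\alpha(x,y,t,u),
\]
with $c_\alpha$ as in (\ref{bp5}). After this shift, the linear operator $\partial_t-L_\alpha+c_\alpha I$ has nonnegative zeroth-order coefficient and therefore obeys a strong maximum principle on $Q_T$, while the right-hand side is nondecreasing in $u_\alpha$ because $c_\alpha-\partial f_\alpha/\partial u_\alpha\ge 0$. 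Each iterate will be defined by freezing the right-hand side at the previous iterate and solving a scalar linear parabolic problem with the original data inherited from (\ref{bp1}); classical linear parabolic theory provides a unique smooth iterate at every step.

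For the quasi-monotone nondecreasing case, starting from $\overline u^{(0)}=\widetilde u$ and $\underline u^{(0)}=\widehat u$, I would use the uncoupled iteration
\[
\overline u^{(n)}_{\alpha,t}-L_\alpha\overline u^{(n)}_\alpha+c_\alpha\overline u^{(n)}_\alpha \;=\; c_\alpha\overline u^{(n-1)}_\alpha-f_\alpha\bigl(x,y,t,\overline u^{(n-1)}\bigr),
\]
and analogously for $\underline u^{(n)}$. Using (\ref{bp3b}), the first step $\overline u^{(1)}\le\overline u^{(0)}$ follows from the maximum principle applied to the difference; induction combined with (\ref{bp5})--(\ref{bp6}) then produces the nested ordering
\[
\widehat u\;\le\;\underline u^{(1)}\;\le\;\cdots\;\le\;\underline u^{(n)}\;\le\;\overline u^{(n)}\;\le\;\cdots\;\le\;\overline u^{(1)}\;\le\;\widetilde u.
\]
Pointwise monotone limits $\underline u^{*},\overline u^{*}$ exist, and standard parabolic Schauder estimates on the linear iteration yield enough compactness to pass to the limit in $f_\alpha$, so both limits are classical solutions lying in $\langle\widehat u,\widetilde u\rangle$.

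For the quasi-monotone nonincreasing case the sign in (\ref{bp6}) is reversed and the single-sequence argument breaks down. Instead I couple the two sequences in the same way as (\ref{bpb}), taking
\[
\overline u^{(n)}_{\alpha,t}-L_\alpha\overline u^{(n)}_\alpha+c_\alpha\overline u^{(n)}_\alpha \;=\; c_\alpha\overline u^{(n-1)}_\alpha-f_\alpha\bigl(x,y,t,\overline u^{(n-1)}_\alpha,\underline u^{(n-1)}_{\alpha'}\bigr),
\]
and symmetrically for $\underline u^{(n)}_\alpha$ with the $\alpha'$-argument replaced by $\overline u^{(n-1)}_{\alpha'}$. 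The same maximum-principle induction, now using (\ref{bp7}) to control the cross terms, delivers the same nested ordering and a pair of classical limit solutions in the sector.

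Uniqueness is then obtained in one stroke for both cases: if $u,v\in\langle\widehat u,\widetilde u\rangle$ are two solutions and $w_\alpha:=u_\alpha-v_\alpha$, the mean-value theorem together with (\ref{bp5}) and either (\ref{bp6}) or (\ref{bp7}) produces a linear coupled system
\[
w_{\alpha,t}-L_\alpha w_\alpha+a_{\alpha\alpha}w_\alpha+a_{\alpha\alpha'}w_{\alpha'}=0,
\]
with bounded coefficients and homogeneous boundary and initial data. Applying the maximum principle to $e^{-\lambda t}w_\alpha$ for $\lambda$ large (equivalently, a Gronwall estimate on $\sum_\alpha\|w_\alpha(\cdot,t)\|_\infty$) forces $w\equiv 0$, which in particular identifies the two iterative limits and closes the argument. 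The main obstacle is the nonincreasing case: the induction must simultaneously track four inequalities because each sequence depends on the opposite one, and the sign manipulations only work because of the cross-monotonicity assumption (\ref{bp7}); by contrast, the parabolic regularity needed to pass to the limit in the nonlinearity is routine once the iterates are pinned inside the bounded sector.
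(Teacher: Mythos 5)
The paper does not actually prove this theorem; it defers entirely to Theorems 8.3.1 and 8.3.2 of Pao's book, and your sketch is a faithful reconstruction of exactly that argument: shift by $c_\alpha u_\alpha$ so that $\Gamma_\alpha(u)=c_\alpha u_\alpha-f_\alpha(\cdot,u)$ is monotone, run the uncoupled Picard-type iteration in the nondecreasing case and the cross-coupled one in the nonincreasing case, squeeze with the maximum principle, and pass to the limit by parabolic regularity. The construction of the sequences, the nested ordering, and the Gronwall-type uniqueness are all correct and match the cited proof.

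There is one logical wrinkle you should repair, in the quasi-monotone nonincreasing case. There the two limits $\overline u^{*}$ and $\underline u^{*}$ of the coupled iteration are \emph{not} a priori solutions of (\ref{bp1}); passing to the limit only shows that the pair satisfies the coupled system
\begin{equation*}
\overline u^{*}_{\alpha,t}-L_\alpha\overline u^{*}_\alpha+f_\alpha\bigl(x,y,t,\overline u^{*}_\alpha,\underline u^{*}_{\alpha'}\bigr)=0,\qquad
\underline u^{*}_{\alpha,t}-L_\alpha\underline u^{*}_\alpha+f_\alpha\bigl(x,y,t,\underline u^{*}_\alpha,\overline u^{*}_{\alpha'}\bigr)=0,
\end{equation*}
i.e.\ they form a pair of quasi-solutions. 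Your closing step applies the uniqueness estimate to ``two solutions $u,v$'' and then says this ``in particular identifies the two iterative limits''; but you cannot invoke uniqueness of solutions of (\ref{bp1}) before you know the limits \emph{are} solutions. The order must be reversed: apply the same mean-value/Gronwall computation directly to $w_\alpha=\overline u^{*}_\alpha-\underline u^{*}_\alpha$ using the coupled limit equations above (the resulting linear system has the same bounded coefficients and homogeneous data), conclude $w\equiv 0$, and only then does $\overline u^{*}=\underline u^{*}$ become a genuine solution of (\ref{bp1}), after which the uniqueness argument for arbitrary solutions in the sector goes through as you wrote it. The computation is identical to the one you already have, so the gap is one of arrangement rather than substance, but as written the existence claim in the nonincreasing case is not yet established at the point where you invoke it.
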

The proof of the theorem can be found in  Theorems 8.3.1 and 8.3.2,  \cite{P92}.
\section{The nonlinear difference scheme}
\subsection{The statement of  the nonlinear difference\\ scheme}
On  $\overline{\omega}$ and [0,T], we introduce a rectangular mesh $\overline{\Omega}^{h}=\overline{\Omega}^{hx}\times\overline{\Omega}^{hy}$ and $ \overline{\Omega}^{\tau}$, such that
\begin{align}\label{bp8}
&\overline{\Omega}^{hx}=\{x_{i},\quad  i=0,1,\ldots, N_{x};\quad x_{0}=0, \quad x_{N_{x}}=l_{1}; \ \  hx=x_{i+1}-x_{i}\},  \\
&\overline{\Omega}^{hy}=\{y_{j},\quad  j=0,1,\ldots, N_{y};\quad y_{0}=0, \quad y_{N_{y}}=l_{2}; \ \ hy=y_{j+1}-y_{j}\},\nonumber \\
& \overline{\Omega}^{\tau}=\{t_{m},\quad m=0,1,\ldots,N_{\tau};\quad t_{0}=0,\quad t_{N_{\tau}}=T;\ \ \tau=t_{m}-t_{m-1} \}.\nonumber
\end{align}
For a mesh function $U(p,t_{m})=(U_{1}(p,t_{m}),U_{2}(p,t_{m}))$, $(p,t_{m}) \in \overline{\Omega}^{h}\times\overline{\Omega}^{\tau}$, $p=(x_{i},y_{j})$,
we use   the implicit  difference  scheme
\begin{eqnarray}\label{bp9}
&&\left(\mathcal{L}_{\alpha}^{h}(p,t_{m})+\tau^{-1}\right)U_{\alpha}(p,t_{m}) +f_{\alpha}(p,t_{m},U)-\tau^{-1} U_{\alpha}(p,t_{m-1})=0,\\
&&\quad (p,t_{m})\in \Omega^{h\tau}=\Omega^{h}\times \Omega^{\tau},\quad U(p,t_{m})=g(p,t_{m}),\quad (p,t_{m})\in \partial \Omega^{h\tau},\nonumber\\
&&\quad U(p,0)=\psi(p),\quad p\in\overline{\Omega}^{h},\nonumber
\end{eqnarray}
\begin{align*}
 \mathcal{L}_{\alpha}^{h}(p,t_{m})U_{\alpha}(p,t_{m})=&-\varepsilon_{\alpha}\left( D^{2}_{x}U_{\alpha}(p,t_{m})+D^{2}_{y}U_{\alpha}(p,t_{m})\right)\nonumber \\
  &+v_{\alpha,1}(p,t_{m})D^{1}_{x}U_{\alpha}(p,t_{m}) \nonumber\\
  &+v_{\alpha,2}(p,t_{m})D^{1}_{y}U_{\alpha}(p,t_{m}),\quad \alpha=1,2,
\end{align*}
where $\partial\Omega^{h}$ is the boundary of $\Omega^{h}$.
It is assumed that the functions  $v_{\alpha,1}(p,t_{m})$ and $v_{\alpha,2}(p,t_{m})$ , $(p,t_{m})\in\overline{\Omega}^{h\tau}$, $\alpha=1,2$,
 are nonnegative, $D^{2}_{x}U_{\alpha}(p,t_{m})$, $D^{2}_{y}U_{\alpha}(p,t_{m})$ and $D^{1}_{x}U_{\alpha}(p,t_{m})$, $D^{1}_{y}U_{\alpha}(p,t_{m})$, $\alpha=1,2$,  are, respectively, the central difference and backward difference approximations to the second and first derivatives:
\begin{align*}
&D^{2}_{x}U_{\alpha}(x_{i},y_{j},t_{m}) =\frac{U_{\alpha,i-1,j,m}-2U_{\alpha,ij,m}+U_{\alpha,i+1,j,m}}{h_{x}^{2}},\\
&D^{2}_{y}U_{\alpha}(x_{i},y_{j},t_{m}) =\frac{U_{\alpha,i,j-1,m}-2U_{\alpha,ij,m}+U_{\alpha,i,j+1,m}}{h_{y}^{2}}, \\
&D^{1}_{x}U_{\alpha}(x_{i},y_{j},t_{m}) =\frac{U_{\alpha,ij,m}-U_{\alpha,i-1,j,m}}{h_{x}}, \\
&D^{1}_{y}U_{\alpha}(x_{i},y_{j},t_{m}) =\frac{U_{\alpha,ij,m}-U_{\alpha,i,j-1,m}}{h_{y}},\quad \alpha=1,2,
\end{align*}
where $U_{\alpha,ij,m}\equiv U_{\alpha}(x_{i},y_{j},t_{m})$.
\begin{remark}\label{bp10}
An approximation of the first derivatives $u_{x}$ and $u_{y}$ depends on the signs of $v_{\alpha,1}(x,y,t)$ and $v_{\alpha,2}(x,y,t)$ , $\alpha=1,2$.
When $v_{\alpha,1}(x,y,t)$ and $v_{\alpha,2}(x,y,t)$, $\alpha =1,2$, are nonpositive, then $u_{x}$ and $u_{y}$ are approximated by the  forward difference formula. The first derivatives  $u_{x}$ and $u_{y}$ are approximated by  using both forward or backward difference formulae when $v_{\alpha,1}(x,y,t)$ and $v_{\alpha,2}(x,y,t)$,  $\alpha=1,2$, have variable signs.
\end{remark}
On each time level $t_{m}$, $m\geq1$, we introduce the linear problems
\begin{align}\label{bp11}
&\Big( \mathcal{L}_{\alpha}^{h}(p,t_{m})+\left(\tau^{-1}+ k_{\alpha}(p,t_{m})\right)I\Big)W_{\alpha}(p,t_{m})=\varphi_{\alpha}(p,t_{m}),\quad p\in \Omega^{h},\nonumber\\
&  \alpha=1,2,\quad U(p,t_{m})=g(p,t_{m}),\quad p\in \partial \Omega^{h},
\end{align}
where $I$ is the identity operator and  $k_{\alpha}(p,t_{m})$, $\alpha=1,2$, are nonnegative bounded mesh  functions. We now formulate the maximum
principle for the difference operators $ \mathcal{L}_{\alpha}^{h}(p,t_{m})+(\tau^{-1}+ k_{\alpha}(p,t_{m}))I$, $\alpha=1,2$,
and give an estimate of the solution to (\ref{bp11}).
\begin{lemma}\label{bp12}
 \begin{itemize}
\item[(i)] If $W_{\alpha}(p,t_{m})$, $\alpha=1,2$,  satisfy  the conditions
  \begin{align*}
 &\left( \mathcal{L}_{\alpha}^{h}(p,t_{m})+(\tau^{-1}+k_{\alpha}(p,t_{m}))I\right) W_{\alpha}(p,t_{m})\geq0 \ (\leq 0),\quad p\in \Omega^{h}, \nonumber\\
 & W_{\alpha}(p,t_{m}))\geq0 \ (\leq 0),\quad p\in \partial \Omega^{h},
   \end{align*}
 then $W_{\alpha}(p,t_{m})\geq0 \ (\leq0),\quad p \in \overline{\Omega}^{h}$.
\item[(ii)]
 The following estimates of the solution to (\ref{bp11}) hold
  \begin{equation}\label{bp13}
   \|W_{\alpha}(\cdot,t_{m}) \|_{\overline{\Omega}^{h}}\leq \max \left\{\|g_{\alpha} (\cdot,t_{m})\|_{\partial \Omega^{h}},\left\| \frac{\varphi_{\alpha}(\cdot,t_{m})}{ k_{\alpha}(\cdot,t_{m})+\tau^{-1}}\right\|_{\Omega^{h}} \right\},\quad \alpha=1,2,
  \end{equation}
  where
  \begin{align*}
  &  \|g_{\alpha}(\cdot,t_{m})\|_{\partial\Omega^{h}}=\max_{p\in\partial\Omega^{h}}|g_{\alpha}(p,t_{m})|,\\
   &\left\| \frac{\varphi_{\alpha}(\cdot,t_{m})}{ k_{\alpha}(\cdot,t_{m})+\tau^{-1}}\right\|_{\Omega^{h}}
   =\max_{p\in\Omega^{h}}\left|  \frac{\varphi_{\alpha}(p,t_{m})}{k_{\alpha}(p,t_{m})+\tau^{-1}}\right|.
  \end{align*}
\end{itemize}
\end{lemma}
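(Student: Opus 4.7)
The plan is to reduce both parts of the lemma to the standard theory of discrete maximum principles for $M$-matrices. For part (i), I would first expand the operator $\mathcal{L}_\alpha^h + (\tau^{-1}+k_\alpha)I$ at a generic interior mesh point $p=(x_i,y_j)$ so as to expose its coefficient structure. The value $W_{\alpha,ij,m}$ enters with the strictly positive coefficient
$$d = \frac{2\varepsilon_\alpha}{h_x^2} + \frac{2\varepsilon_\alpha}{h_y^2} + \frac{v_{\alpha,1}(p,t_m)}{h_x} + \frac{v_{\alpha,2}(p,t_m)}{h_y} + \tau^{-1} + k_\alpha(p,t_m),$$
while the four stencil neighbours $W_{\alpha,i\pm 1,j,m}$ and $W_{\alpha,i,j\pm 1,m}$ enter with non-positive coefficients; this uses the assumptions $\varepsilon_\alpha>0$, $v_{\alpha,1},v_{\alpha,2}\geq 0$, together with the backward-difference upwinding chosen for the convective terms. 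Moreover, $d$ plus the sum of the four off-diagonal coefficients equals $\tau^{-1}+k_\alpha(p,t_m)>0$, which identifies the scheme as a strictly row-diagonally-dominant $M$-matrix.

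Given this structure, I would carry out the classical minimum-point argument. Let $p^{\ast}\in\overline{\Omega}^h$ be a point where $W_\alpha(\cdot,t_m)$ attains its minimum. If $p^{\ast}\in\partial\Omega^h$ the conclusion $W_\alpha(p^{\ast})\geq 0$ is immediate from the boundary hypothesis; otherwise each stencil neighbour of $p^{\ast}$ has value at least $W_\alpha(p^{\ast})$, and the non-positivity of the off-diagonal coefficients yields
$$0 \leq \bigl(\mathcal{L}_\alpha^h + (\tau^{-1}+k_\alpha)I\bigr)W_\alpha(p^{\ast}) \leq \bigl(\tau^{-1}+k_\alpha(p^{\ast},t_m)\bigr)\,W_\alpha(p^{\ast}),$$
which forces $W_\alpha(p^{\ast})\geq 0$. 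The reverse implication in (i) follows by applying the same reasoning to $-W_\alpha$.

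For part (ii), I would introduce the constant barrier
$$\Psi = \max\left\{\|g_\alpha(\cdot,t_m)\|_{\partial\Omega^h},\ \left\|\frac{\varphi_\alpha(\cdot,t_m)}{k_\alpha(\cdot,t_m)+\tau^{-1}}\right\|_{\Omega^h}\right\}$$
and consider the two auxiliary mesh functions $Z^{\pm} = \Psi\pm W_\alpha$. Since $\mathcal{L}_\alpha^h$ annihilates constants,
$$\bigl(\mathcal{L}_\alpha^h + (\tau^{-1}+k_\alpha)I\bigr)Z^{\pm} = (\tau^{-1}+k_\alpha)\Psi \pm \varphi_\alpha \geq 0 \quad\text{on }\Omega^h,$$
by the definition of $\Psi$, while $Z^{\pm} = \Psi\pm g_\alpha \geq 0$ on $\partial\Omega^h$. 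Applying part (i) to each of $Z^{+}$ and $Z^{-}$ then yields $|W_\alpha|\leq\Psi$ on $\overline{\Omega}^h$, which is exactly estimate (\ref{bp13}).

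The only real obstacle is the sign bookkeeping in part (i): the $M$-matrix property hinges on the upwinding choice made for the convective terms, which in the present scheme is backward differencing for nonnegative $v_{\alpha,\beta}$ (with the analogous forward choice indicated in Remark \ref{bp10} when the velocity signs are reversed). Once this structural point is secured, the minimum-point argument in part (i) is routine and the constant-barrier comparison in part (ii) is immediate.
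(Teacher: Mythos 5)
Your proof is correct. The paper itself does not prove Lemma \ref{bp12} but defers to \cite{Ab79} and \cite{sm2001}, and your argument is precisely the standard one found there: the sign pattern and strict diagonal dominance of the stencil (with row sum $\tau^{-1}+k_\alpha>0$) give the discrete maximum principle via the minimum-point argument, and the bound (\ref{bp13}) follows by comparison with the constant majorant $\Psi$, using that $\mathcal{L}_\alpha^h$ annihilates constants.
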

The proof of the lemma can be found in \cite{Ab79}, \cite{sm2001}.
\begin{remark}\label{bp14}
In this remark we discuss the mean-value theorem for vector-valued functions. Introduce the following notation:
\begin{equation}\label{bp15}
\mathcal{F}_{\alpha}(x,y,t,u_{\alpha},u_{\alpha^{\prime}})=\left\{ \begin{array}{ll}
\mathcal{F}_{1}(x,y,t,u_{1},u_{2}),\quad \alpha=1,\\
\mathcal{F}_{2}(x,y,t,u_{1},u_{2}),\quad \alpha=2.
\end{array}\right.
\end{equation}
Assume that $\mathcal{F}_{\alpha}(x,y,t,u_{\alpha},u_{\alpha^{\prime}})$, $\alpha=1,2$, are smooth functions, then we have
\begin{equation}\label{bp16}
   \mathcal{F}_{\alpha}(x,y,t,u_{\alpha},u_{\alpha^{\prime}})-\mathcal{F}_{\alpha}(x,y,t,w_{\alpha},u_{\alpha^{\prime}})=\frac{\partial \mathcal{F}_{\alpha}(h_{\alpha}, u_{\alpha^{\prime}})}{\partial u_{\alpha}}[u_{\alpha}-w_{\alpha}],
  \end{equation}
   \begin{equation*}
   \mathcal{F}_{\alpha}(x,y,t,u_{\alpha},u_{\alpha^{\prime}})-\mathcal{F}_{\alpha}(x,y,t,u_{\alpha},w_{\alpha^{\prime}})=\frac{\partial \mathcal{F}_{\alpha}(u_{\alpha},h_{\alpha^{\prime}})}{\partial u_{\alpha^{\prime}}}[u_{\alpha^{\prime}}-w_{\alpha^{\prime}}],
  \end{equation*}
  where $h_{\alpha}(x,y,t)$ lies between $u_{\alpha}(x,y,t)$ and $w_{\alpha}(x,y,t)$, and  $h_{\alpha^{\prime}}(x,y,t)$ lies between $u_{\alpha^{\prime}}(x,y,t)$ and $w_{\alpha^{\prime}}(x,y,t)$, $\alpha=1,2$.
\end{remark}
\subsection{Quasi-monotone nondecreasing reaction functions}
On each time level $t_{m}\in \Omega^{\tau}$, $m\geq1$,  the vector mesh functions
\begin{equation*}
\widetilde{U}(p,t_{m})=(\widetilde{U}_{1}(p,t_{m}),\widetilde{U}_{2}(p,t_{m})),\quad \widehat{U}(p,t_{m})=(\widehat{U}_{1}(p,t_{m}),\widehat{U}_{2}(p,t_{m})),
\end{equation*}
\begin{equation*}
 p\in \overline{\Omega}^{h},
\end{equation*}
are called ordered upper and lower solutions of (\ref{bp9}),  if they satisfy the inequalities
\begin{subequations}\label{bp19}
  \begin{equation}\label{bp19a}
\widetilde{U}(p,t_{m})\geq\widehat{U}(p,t_{m}),\quad p\in \overline{\Omega}^{h},\quad m\geq1,
\end{equation}
\begin{equation}\label{bp19b}
\left(\mathcal{L}_{\alpha}^{h}(p,t_{m})+\tau^{-1}\right)\widetilde{U}_{\alpha}(p,t_{m}) +f_{\alpha}(p,t_{m},\widetilde{U})-\tau^{-1} \widetilde{U}_{\alpha}(p,t_{m-1})\geq0,
\end{equation}
\begin{equation*}
  \left(\mathcal{L}_{\alpha}^{h}(p,t_{m})+\tau^{-1}\right)\widehat{U}_{\alpha}(p,t_{m}) +f_{\alpha}(p,t_{m},\widehat{U})-\tau^{-1} \widehat{U}_{\alpha}(p,t_{m-1})\leq0,\  p\in \Omega^{h},
\end{equation*}
\begin{equation}\label{bp19c}
  \alpha=1,2,\ m\geq1,\quad \widehat{U}(p,t_{m})\leq g(p,t_{m})\leq \widetilde{U}(p,t_{m}),\ p\in \partial \Omega^{h},
  \end{equation}
  \begin{equation*}
 \widehat{U}(p,0)\leq \psi(p)\leq \widetilde{U}(p,0),\quad p\in\overline{\Omega}^{h}.\nonumber
\end{equation*}
\end{subequations}
For a given pair of ordered upper and lower solutions $\widetilde{U}(p,t_{m})$ and $\widehat{U}(p,t_{m})$, we define the sector
\begin{equation}\label{bp41}
  \langle \widehat{U}(t_{m}),\widetilde{U}(t_{m})\rangle=\left\{U(p,t_{m}):\widehat{U}(p,t_{m})\leq U(p,t_{m})\leq \widetilde{U}(p,t_{m}),\quad p\in \overline{\Omega}^{h} \right\}.
\end{equation}
In the sector $ \langle \widehat{U}(t_{m}),\widetilde{U}(t_{m})\rangle$, the vector   function $f(p,t_{m},U)$ is  assumed to satisfy the constraints
\begin{equation}\label{bp20}
 \frac{\partial f_{\alpha}(p,t_{m},U)}{\partial u_{\alpha}}\leq c_{\alpha}(p,t_{m}),\quad U\in \langle \widehat{U}(t_{m}),\widetilde{U}(t_{m})\rangle,\quad p\in\overline{\Omega}^{h},\quad \alpha=1,2,
\end{equation}
\begin{equation}\label{bp21}
-\frac{\partial f_{\alpha}(p,t_{m},U)}{\partial u_{\alpha^{\prime}}}\geq0,\quad U\in \langle \widehat{U}(t_{m}),\widetilde{U}(t_{m})\rangle, \ p\in \overline{\Omega}^{h},\ \alpha^{\prime}\neq \alpha,\quad \alpha, \alpha^{\prime}=1,2,
\end{equation}
where $c_{\alpha}(p,t_{m})$, $\alpha=1,2$, are nonnegative  bounded functions.
Reaction functions,  which satisfy (\ref{bp21}), are called
quasi-monotone nondecreasing.
\par
We introduce the notation
\begin{equation}\label{bp22}
\Gamma_{\alpha}(p,t_{m},U)=c_{\alpha}(p,t_{m})U_{\alpha}(p,t_{m})-f_{\alpha}(p,t_{m},U),\quad p\in \overline{\Omega}^{h} ,\quad \alpha=1,2,
\end{equation}
where $c_{\alpha}(p,t_{m})$, $\alpha=1,2$, are defined in (\ref{bp20}), and give a monotone property of $\Gamma_{\alpha}$, $\alpha=1,2$.
\begin{lemma}\label{bp23}
Suppose that  $U=(U_{1},U_{2})$ and $V=(V_{1},V_{2})$, are any functions in $\langle\widehat{U}(t_{m}),\widetilde{U}(t_{m})\rangle$, where
 $U\geq V$, and
assume that (\ref{bp20}),  (\ref{bp21}) are satisfied. Then
\begin{equation}\label{bp24}
  \Gamma_{\alpha}(U)\geq \Gamma_{\alpha}(V),\quad \alpha=1,2,
\end{equation}
where $(p,t_{m})$ is suppressed in (\ref{bp24}).
\end{lemma}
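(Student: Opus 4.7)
The plan is to apply the componentwise mean-value representation from Remark~\ref{bp14} to decompose $f_{\alpha}(p,t_{m},U)-f_{\alpha}(p,t_{m},V)$ into a piece depending only on the difference $U_{\alpha}-V_{\alpha}$ and a piece depending only on $U_{\alpha^{\prime}}-V_{\alpha^{\prime}}$. The bound (\ref{bp20}) will control the diagonal term after it is combined with $c_{\alpha}(U_{\alpha}-V_{\alpha})$, and the quasi-monotonicity (\ref{bp21}) will handle the off-diagonal term.

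Concretely, I would first write
\begin{equation*}
\Gamma_{\alpha}(U)-\Gamma_{\alpha}(V)=c_{\alpha}\bigl(U_{\alpha}-V_{\alpha}\bigr)-\bigl[f_{\alpha}(U_{\alpha},U_{\alpha^{\prime}})-f_{\alpha}(V_{\alpha},V_{\alpha^{\prime}})\bigr],
\end{equation*}
and then insert the intermediate term $f_{\alpha}(V_{\alpha},U_{\alpha^{\prime}})$, splitting the bracket into a telescoping sum
\begin{equation*}
\bigl[f_{\alpha}(U_{\alpha},U_{\alpha^{\prime}})-f_{\alpha}(V_{\alpha},U_{\alpha^{\prime}})\bigr]+\bigl[f_{\alpha}(V_{\alpha},U_{\alpha^{\prime}})-f_{\alpha}(V_{\alpha},V_{\alpha^{\prime}})\bigr].
\end{equation*}
Applying the two identities in (\ref{bp16}) to the respective brackets produces
\begin{equation*}
\Gamma_{\alpha}(U)-\Gamma_{\alpha}(V)=\Bigl[c_{\alpha}-\tfrac{\partial f_{\alpha}(h_{\alpha},U_{\alpha^{\prime}})}{\partial u_{\alpha}}\Bigr]\bigl(U_{\alpha}-V_{\alpha}\bigr)+\Bigl[-\tfrac{\partial f_{\alpha}(V_{\alpha},h_{\alpha^{\prime}})}{\partial u_{\alpha^{\prime}}}\Bigr]\bigl(U_{\alpha^{\prime}}-V_{\alpha^{\prime}}\bigr),
\end{equation*}
with $h_{\alpha}$ between $V_{\alpha}$ and $U_{\alpha}$, and $h_{\alpha^{\prime}}$ between $V_{\alpha^{\prime}}$ and $U_{\alpha^{\prime}}$; by the sector ordering both intermediate points lie in $\langle\widehat{U}(t_{m}),\widetilde{U}(t_{m})\rangle$.

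From here the conclusion is immediate: the first coefficient is nonnegative by (\ref{bp20}), the second by quasi-monotonicity (\ref{bp21}), and both factors $U_{\alpha}-V_{\alpha}$ and $U_{\alpha^{\prime}}-V_{\alpha^{\prime}}$ are nonnegative by hypothesis $U\geq V$. Summing yields $\Gamma_{\alpha}(U)\geq\Gamma_{\alpha}(V)$ for $\alpha=1,2$.

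I do not anticipate a serious obstacle here; the only point that requires a moment of care is making sure the intermediate arguments $h_{\alpha}$ and $h_{\alpha^{\prime}}$ at which the partial derivatives are evaluated still lie in the sector so that (\ref{bp20}) and (\ref{bp21}) legitimately apply. This follows at once because each $h$ is sandwiched between two functions in $\langle\widehat{U}(t_{m}),\widetilde{U}(t_{m})\rangle$, hence lies in the sector as well.
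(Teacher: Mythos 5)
Your proposal is correct and follows essentially the same route as the paper: the same telescoping insertion of the intermediate term $f_{\alpha}(V_{\alpha},U_{\alpha^{\prime}})$, the same application of the mean-value identities (\ref{bp16}), and the same sign analysis via (\ref{bp20}), (\ref{bp21}) and $U\geq V$. The only addition is your explicit remark that the intermediate points remain in the sector, which the paper leaves implicit.
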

\begin{proof}
  From (\ref{bp22}), we have
  \begin{eqnarray}\label{bp25}
  \Gamma_{\alpha}(U)-\Gamma _{\alpha}(V)&=&c_{\alpha}(p,t_{m})(U_{\alpha}(p,t_{m})-V_{\alpha}(p,t_{m}) \\ \nonumber
 & &- \left[f_{\alpha}(p,t_{m},U_{2})-f_{\alpha}(p,t_{m},V_{1},U_{2})\right]\\ \nonumber
 & &- \left[f_{\alpha}(p,t_{m},V_{1},U_{2})-f_{\alpha}(p,t_{m},V_{1},V_{2})\right]. \nonumber
\end{eqnarray}
For $\alpha=1$ in (\ref{bp25}), using the mean-value theorem (\ref{bp16}), we obtain
\begin{eqnarray*}
  \Gamma_{1}(U)-\Gamma _{1}(V)&=& \left(c_{1}(p,t_{m})-\frac{\partial f_{1}(Q_{1},U_{2})}{\partial u_{1}}\right)(U_{1}-V_{1})\\ &&-\frac{\partial f_{1}(V_{1},Q_{2})}{\partial u_{2}}(U_{2}-V_{2}),
\end{eqnarray*}
where
\begin{equation*}
V_{\alpha}(p,t_{m})\leq Q_{\alpha}(p,t_{m})\leq U_{\alpha}(p,t_{m}),\quad p\in \overline{\Omega}^{h},\quad \alpha=1,2,\quad m\geq1.
\end{equation*}
From here, (\ref{bp20}), (\ref{bp21}) and taking into account that $U_{\alpha}\geq V_{\alpha}$, $\alpha=1,2$, we conclude (\ref{bp24})
for $\alpha=1$. Similarly, we can prove (\ref{bp24}) for $\alpha=2$.
\end{proof}
\subsection{Quasi-monotone nonincreasing reaction functions}
On each time level $t_{m}\in \Omega^{\tau}$, $m\geq1$,  the vector mesh functions
\begin{equation*}
\widetilde{U}(p,t_{m})=(\widetilde{U}_{1}(p,t_{m}),\widetilde{U}_{2}(p,t_{m})),\quad \widehat{U}(p,t_{m})=(\widehat{U}_{1}(p,t_{m}),\widehat{U}_{2}(p,t_{m})),
\end{equation*}
\begin{equation*}
 p\in \overline{\Omega}^{h},
\end{equation*}
are called ordered upper and lower solutions of (\ref{bp9}), if they satisfy the inequalities
\begin{subequations}\label{bp40}
  \begin{equation}\label{bp40a}
\widetilde{U}(p,t_{m})\geq\widehat{U}(p,t_{m}),\quad p\in \overline{\Omega}^{h},\quad m\geq1,
\end{equation}
\begin{equation}\label{bp40b}
 \left(\mathcal{L}_{\alpha}^{h}(p,t_{m})+\tau^{-1}\right)\widetilde{U}_{\alpha}(p,t_{m}) +f_{\alpha}(p,t_{m},\widetilde{U}_{\alpha},\widehat{U}_{\alpha^{\prime}})-\tau^{-1} \widetilde{U}_{\alpha}(p,t_{m-1})\geq0,
 \end{equation}
 \begin{equation*}
 \left(\mathcal{L}_{\alpha}^{h}(p,t_{m})+\tau^{-1}\right)\widehat{U}_{\alpha}(p,t_{m}) +f_{\alpha}(p,t_{m},\widehat{U}_{\alpha},\widetilde{U}_{\alpha^{\prime}})-\tau^{-1} \widehat{U}_{\alpha}(p,t_{m-1})\leq0,
\end{equation*}
\begin{equation*}
p\in \Omega^{h}, \quad \alpha^{\prime}\neq \alpha, \quad  \alpha, \alpha^{\prime}=1,2,\quad m\geq1,
\end{equation*}
\begin{equation}\label{bp40c}
\widehat{U}(p,t_{m})\leq g(p,t_{m})\leq \widetilde{U}(p,t_{m}),\quad p\in \partial \Omega^{h},
\end{equation}
\begin{equation*}
  \widehat{U}(p,0)\leq \psi(p)\leq \widetilde{U}(p,0),\quad p\in\overline{\Omega}^{h}.
\end{equation*}
\end{subequations}
The upper $\widetilde{U}(p,t_{m})$ and lower $\widehat{U}(p,t_{m})$ solutions are dependent of each other and calculated simultaneously.

We assume that in the sector $\langle \widehat{U},\widetilde{U}\rangle$ defined in (\ref{bp41}),
the vector function $f(p,t_{m},U)$ in (\ref{bp9}), satisfies the constraints (\ref{bp20}) and
\begin{equation}\label{bp42}
-\frac{\partial f_{\alpha}(p,t_{m},U)}{\partial u_{\alpha^{\prime}}}\leq0,\quad U\in \langle \widehat{U}(t_{m}),\widetilde{U}(t_{m})\rangle, \ p\in \overline{\Omega}^{h},\ \alpha^{\prime}\neq \alpha,\quad \alpha, \alpha^{\prime}=1,2.
\end{equation}
Reaction functions, which satisfy (\ref{bp42}), are called quasi-monotone nonincreasing.
 We give a monotone property of $\Gamma_{\alpha}$, $\alpha=1,2$, in the case of quasi-monotone  nonincreasing
 reaction functions, where $\Gamma_{\alpha}$, $\alpha=1,2$, are defined in   (\ref{bp22}).
\begin{lemma}\label{bp43}
Suppose that $U=(U_{1},U_{2})$ and $V=(V_{1},V_{2})$, are any functions in $\langle\widehat{U}(t_{m},\widetilde{U}(t_{m})\rangle$, where
 $U\geq V$, and
assume that (\ref{bp20}) and (\ref{bp42}) are satisfied. Then
\begin{equation}\label{bp44}
  \Gamma_{\alpha}(U_{1},V_{2})\geq \Gamma_{\alpha}(V_{1},U_{2}),\quad \alpha=1,2,
\end{equation}
where $(p,t_{m})$ is suppressed in (\ref{bp44}).
\end{lemma}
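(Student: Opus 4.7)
The plan is to follow the pattern of the proof of Lemma \ref{bp23}, but since the statement now mixes components of $U$ and $V$ on both sides, I need to insert a suitable intermediate point before applying the mean-value theorem (\ref{bp16}).

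First, fix $\alpha=1$. From the definition (\ref{bp22}),
\begin{equation*}
\Gamma_{1}(U_{1},V_{2})-\Gamma_{1}(V_{1},U_{2})=c_{1}(U_{1}-V_{1})-\bigl[f_{1}(U_{1},V_{2})-f_{1}(V_{1},U_{2})\bigr].
\end{equation*}
The key step is to split the $f_{1}$-difference through the intermediate state $(V_{1},V_{2})$:
\begin{equation*}
f_{1}(U_{1},V_{2})-f_{1}(V_{1},U_{2})=\bigl[f_{1}(U_{1},V_{2})-f_{1}(V_{1},V_{2})\bigr]+\bigl[f_{1}(V_{1},V_{2})-f_{1}(V_{1},U_{2})\bigr],
\end{equation*}
so that each bracket differs in only one argument and (\ref{bp16}) applies. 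This gives intermediate points $Q_{1}$ between $V_{1}$ and $U_{1}$, and $Q_{2}$ between $V_{2}$ and $U_{2}$, yielding
\begin{equation*}
\Gamma_{1}(U_{1},V_{2})-\Gamma_{1}(V_{1},U_{2})=\Bigl(c_{1}-\tfrac{\partial f_{1}(Q_{1},V_{2})}{\partial u_{1}}\Bigr)(U_{1}-V_{1})+\tfrac{\partial f_{1}(V_{1},Q_{2})}{\partial u_{2}}(U_{2}-V_{2}).
\end{equation*}

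Then I close the argument using the hypotheses: (\ref{bp20}) forces the coefficient of $(U_{1}-V_{1})$ to be nonnegative, and the quasi-monotone nonincreasing condition (\ref{bp42}) forces $\partial f_{1}/\partial u_{2}\geq 0$, so the coefficient of $(U_{2}-V_{2})$ is also nonnegative. Combined with $U_{1}\geq V_{1}$ and $U_{2}\geq V_{2}$, both terms are nonnegative, establishing (\ref{bp44}) for $\alpha=1$. The case $\alpha=2$ is symmetric: insert the intermediate point $(V_{1},V_{2})$ into $f_{2}(V_{1},U_{2})-f_{2}(U_{1},V_{2})$ and apply (\ref{bp16}), (\ref{bp20}), (\ref{bp42}) in the same way.

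There is no substantive obstacle; the only thing to be careful about is the sign bookkeeping, since here (unlike Lemma \ref{bp23}) the ``large'' and ``small'' arguments of $f_{\alpha}$ are swapped in the coupling variable, which is precisely what makes the nonincreasing hypothesis (\ref{bp42}) (rather than (\ref{bp21})) yield the correct sign.
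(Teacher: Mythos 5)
Your argument is correct and coincides with the paper's own proof: after inserting the intermediate state $(V_{1},V_{2})$ and applying the mean-value theorem (\ref{bp16}), you arrive at exactly the paper's identity (\ref{bp45}) specialized to $\alpha=1$, namely
$\Gamma_{1}(U_{1},V_{2})-\Gamma_{1}(V_{1},U_{2})=\bigl(c_{1}-\partial f_{1}(Q_{1},V_{2})/\partial u_{1}\bigr)(U_{1}-V_{1})+\bigl(\partial f_{1}(V_{1},Q_{2})/\partial u_{2}\bigr)(U_{2}-V_{2})$,
and the sign discussion via (\ref{bp20}), (\ref{bp42}) and $U\geq V$ is the same. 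Your observation that for $\alpha=2$ the ``own'' and ``coupling'' arguments trade places, so the relevant difference is $f_{2}(V_{1},U_{2})-f_{2}(U_{1},V_{2})$, is the correct reading of the paper's (index-ambiguous) notation for that case, which the paper itself dispatches with ``similarly.''
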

\begin{proof}
  From (\ref{bp22}), we have
  \begin{eqnarray}\label{bp45}
   \Gamma_{\alpha}(U_{1},V_{2})- \Gamma_{\alpha}(V_{1},U_{2})&=&c_{\alpha}(p,t_{m})(U_{\alpha}(p,t_{m})-V_{\alpha}(p,t_{m}))\\ \nonumber
 &&- \left[f_{\alpha}(p,t_{m},U_{1},V_{2})-f_{\alpha}(p,t_{m},V_{1},V_{2})\right]\\ \nonumber
 &&+ \left[f_{\alpha}(p,t_{m},V_{1},U_{2})-f_{\alpha}(p,t_{m},V_{1},V_{2})\right]. \nonumber
\end{eqnarray}
For $\alpha=1$ in (\ref{bp45}), using the mean-value theorem (\ref{bp16}), we obtain
\begin{align*}
  \Gamma_{1}(U_{1},V_{2})- \Gamma_{1}(V_{1},U_{2})=& \left(c_{1}(p,t_{m})-\frac{\partial f_{1}(Q_{1},V_{2})}{\partial u_{1}}\right)(U_{1}-V_{1})\\ &+\frac{\partial f_{1}(V_{1},Q_{2})}{\partial u_{2}}(U_{2}-V_{2}),
\end{align*}
where
\begin{equation*}
V_{\alpha}(p,t_{m})\leq Q_{\alpha}(p,t_{m})\leq U(p,t_{m}),\quad (p,t_{m})\in \overline{\Omega}^{h \tau},\quad \alpha=1,2.
\end{equation*}
From here, (\ref{bp20}), (\ref{bp42}) and taking into account that $U_{\alpha}\geq V_{\alpha}$, $\alpha=1,2$, we conclude that
$$
  \Gamma_{1}(U_{1},V_{2})- \Gamma_{1}(V_{1},U_{2})\geq0.
 $$
 Similarly, we can prove that
 $$
 \Gamma_{2}(U_{1},V_{2})- \Gamma_{2}(V_{1},U_{2})\geq0.
 $$
\end{proof}
\section{The case of quasi-monotone nondecreasing reaction functions}
\subsection{The statement of the block nonlinear difference scheme}
Write down the difference scheme (\ref{bp9}) at an interior mesh point $(x_{i},y_{j})\in \Omega^{h}$ in the form
\begin{align}\label{bp17}
&d_{\alpha,ij,m}U_{\alpha,ij,m}-l_{\alpha,ij,m}U_{\alpha,i-1,j,m}-r_{\alpha,ij,m}U_{\alpha,i+1,j,m}-b_{\alpha,ij,m}U_{\alpha,i,j-1,m}\nonumber \\ &-t_{\alpha,ij,m}U_{\alpha,i,j+1,m}+f_{\alpha,ij,m}(U_{1,ij,m},U_{2,ij,m})-\tau^{-1}U_{\alpha,ij,m-1}\nonumber\\
& +G_{\alpha,ij,m}^{*}=0,\quad i=1,2,\ldots,N_{x}-1,\quad j=1,2,\ldots,N_{y}-1,  \\
&U_{\alpha,ij,m}=g_{\alpha,ij,m},\quad i=0,N_{x},\quad j=0,N_{y}, \nonumber\\
&U_{\alpha,ij,0}=\psi_{\alpha,ij},\quad i=0,1,\ldots,N_{x},\quad j=0,1,\ldots,N_{y},     \nonumber
  \end{align}
\begin{equation*}
l_{\alpha,ij,m}=\frac{\varepsilon_{\alpha}}{h_{x}^{2}}+\frac{v_{\alpha}(x_{i},y_{j},t_{m})}{h_{x}},\quad r_{\alpha,ij}=\frac{\varepsilon_{\alpha}}{h_{x}^{2}},
\end{equation*}
\begin{equation*}
b_{\alpha,ij,m}=\frac{\varepsilon_{\alpha}}{h_{y}^{2}}+\frac{v_{\alpha}(x_{i},y_{j},t_{m})}{h_{y}},\quad t_{\alpha,ij}=\frac{\varepsilon_{\alpha}}{h_{y}^{2}},
\end{equation*}
\begin{equation*}
d_{\alpha,ij,m}=\tau^{-1}+l_{\alpha,ij,m}+r_{\alpha,ij,m}+b_{\alpha,ij,m}+t_{\alpha,ij,m},\quad \alpha=1,2,
\end{equation*}
where $G_{\alpha,ij,m}^{*}$ is  associated with the boundary function $g_{\alpha}(x_{i},y_{j},t_{m})$. On each time level $m$, $m\geq1$, we define column vectors and diagonal matrices by
\begin{equation*}
  U_{\alpha,i,m}=(U_{\alpha,i,1,m}, \ldots, U_{\alpha,i,N_{y}-1,m})^{T},\ \ G_{\alpha,i,m}^{*}=(G_{\alpha,i,1,m}^{*}, \ldots, G_{\alpha,i,N_{y}-1,m}^{*})^{T},
  \end{equation*}
  \begin{equation*}
    g_{\alpha,i,m}=(g_{\alpha,i,0,m}, g_{\alpha,i,N_{y},m})^{T},\quad i=0,N_{x},
  \end{equation*}
  \begin{equation*}
    \psi_{\alpha,i}=(\psi_{\alpha,i,0}, \ldots, \psi_{\alpha,i,N_{y}})^{T},\quad i=0,1,\ldots,N_{x},
  \end{equation*}
  \begin{align*}
 & F_{\alpha,i,m}(U_{1,i},U_{2,i})=\\
 &(f_{\alpha,i,1,m}(U_{1,i,1,m},U_{2,i,1,m}), \ldots, f_{\alpha,i,N_{y}-1,m}(U_{1,i,N_{y}-1,m},U_{2,i,N_{y}-1,m}))^{T},
 \end{align*}
\begin{equation*}
 L_{\alpha,i,m}=\mbox{diag}(l_{\alpha,i,1,m},\ldots, l_{\alpha,i,N_{y}-1,m}),
 \end{equation*}
 \begin{equation*}
 R_{\alpha,i,m}=\mbox{diag}(r_{\alpha,i,1,m},\ldots, r_{\alpha,i,N_{y}-1,m}),\quad \alpha=1,2,
\end{equation*}
where $L_{\alpha,1,m}U_{\alpha,0,m}$ is included in $G_{\alpha,1,m}^{*}$, and  $R_{\alpha,N_{x}-1,m}U_{\alpha,N_{x},m}$ is included in $G_{\alpha,N_{x},m}^{*}$.
Then the difference scheme (\ref{bp9}) may be written in the form
\begin{align}\label{bp18}
& A_{\alpha,i,m}U_{\alpha,i,m}-(L_{\alpha,i,m}U_{\alpha,i-1,m}+R_{\alpha,i,m}U_{\alpha,i+1,m})=\\
&+F_{\alpha,i,m}(U_{i,m})-\tau^{-1}U_{\alpha,i,m-1}+G_{\alpha,i,m}^{*}=\mathbf{0},\nonumber\\
& i=1,2,\ldots,N_{x}-1,\quad j=1,2,\ldots,N_{y}-1,\quad \alpha=1,2,\quad m\geq1,\nonumber \\
& U_{\alpha,i,m}=g_{\alpha,i,m},\quad i=0,N_{x},\quad U_{\alpha,i,0} =\psi_{\alpha,i},\quad i=0,1,\ldots,N_{x},\nonumber
\end{align}
\begin{equation*}
U_{i,m}=(U_{1,i,m},U_{2,i,m}),
\end{equation*}
with the tridiagonal matrix $A_{\alpha,i,m}$ in the form
\begin{equation*}
A_{\alpha,i,m}= \left[\begin{array}{ccccccc}
d_{\alpha,i,1,m}       &     -t_{\alpha,i,1,m}   &           &                 0          \\   [4 ex]
-b_{\alpha,i,2,m}      &     d_{\alpha,i,2,m}      & -t_{\alpha,i,2,m}     &                                 \\    [.5 ex]
        \quad\quad    \ddots   &          \ \ \ \     \ddots      & \ \ \ \   \ddots                      \\    [2 ex]
              &   - b_{\alpha,i,N_{y}-2,m}      &  \  d_{\alpha,i,N_{y}-2,m}        & -t_{\alpha,i,N_{y}-2,m}             \\    [4 ex]
    0          &              &        -b_{\alpha,i,N_{y}-1,m}      &        d_{\alpha,i,N_{y}-1,m}                     \\   
\end{array}\right]
.
\end{equation*}
Matrices $L_{\alpha,i,m}$ and $R_{\alpha,i,m}$ contain the coupling coefficients of a mesh point, respectively, to the mesh point of the left line and the mesh point of the right line.

We introduce the notation for the residuals of the nonlinear difference scheme (\ref{bp18}) in the  form
\begin{eqnarray}\label{bp18a}
&&\mathcal{G}_{\alpha,i,m}(U_{\alpha,i,m}, U_{\alpha,i, m-1}, U_{\alpha^{\prime},i, m}) =\\
&&A_{\alpha,i,m}U_{\alpha,i,m}-(L_{\alpha,i,m}U_{\alpha,i-1,m}+R_{\alpha,i,m}U_{\alpha,i+1,m}) \nonumber \\
&&+F_{\alpha,i,m}(U_{\alpha,i,m},  U_{\alpha^{\prime},i,m})-\tau^{-1}U_{\alpha,i,m-1}+G_{\alpha,i,m}^{*},\quad  i=1,2,\ldots,N_{x}-1,\nonumber \\
&& \alpha^{\prime}\neq\alpha,\quad \alpha, \alpha^{\prime}=1,2, \nonumber
\end{eqnarray}
where
\begin{align*}
&F_{\alpha,i,m}(U_{\alpha,i,m},U_{\alpha^{\prime},i,m})=\left\{ \begin{array}{ll}
F_{1,i,m}(U_{1,i,m},U_{2,i,m}),\quad \alpha=1,\\
F_{2,i,m}(U_{1,i,m},U_{2,i,m}),\quad \alpha=2,
\end{array}\right.\\
 &i=0,1,\ldots,N_{x}.\nonumber
\end{align*}
\subsection{Block monotone Jacobi and Gauss-Seidel methods}
We now present the block monotone Jacobi and block monotone Gauss--Seidel  methods for the nonlinear
difference scheme (\ref{bp9}) when the reaction functions are quasi-monotone nondecreasing
   based on the method of upper and lower solutions.  We define functions $c_{\alpha,m}$, $\alpha=1,2$, $m\geq1$, in the following form
\begin{equation}\label{bp26a}
  c_{\alpha, m}=\max_{(x_{i},y_{j})\in\overline{\Omega}^{h}}c_{\alpha,ij,m},\quad \alpha=1,2,\quad m\geq1,
\end{equation}
where $c_{\alpha,ij,m}$, $ij\in \overline{\Omega}^{h}$, $\alpha=1,2$, are defined in (\ref{bp20}).
On each time level $t_{m}$, $m\geq1$, the upper $\{\overline{U}^{(n)}_{\alpha,i,m}\}$ and lower $\{\underline{U}^{(n)}_{\alpha,i,m}\}$, $\alpha=1,2$,
sequences of solutions are calculated by the following block  Jacobi and block Gauss-Seidel methods
\begin{align}\label{bp27}
&A_{\alpha,i,m}Z_{\alpha,i,m}^{(n)}-\eta L_{\alpha,i,m}Z^{(n)}_{\alpha,i-1,m}+c_{\alpha,m}Z_{\alpha,i,m}^{(n)}=\\
&-\mathcal{G}_{\alpha,i,m}\left(U_{\alpha,i,m}^{(n-1)},U_{\alpha, i,m-1}, U_{\alpha^{\prime},i,m}^{(n-1)}\right ),\quad  i=1,2,\ldots,  N_{x}-1, \ \ \alpha^{\prime}\neq \alpha,\nonumber\\
&\alpha, \alpha^{\prime}=1,2,\quad m\geq1,\nonumber
\end{align}
\begin{equation*}
Z_{\alpha,i,m}^{(n)}=\left\{ \begin{array}{ll}
g_{\alpha,i,m}-U_{\alpha,i,m}^{(0)}, \quad n=1  , \\
\mathbf{0} ,\quad\quad\quad\quad\quad \quad\ \  n\geq2,
\end{array}\right.
\quad i=0,N_{x},
 \end{equation*}
 \begin{equation*}
U_{\alpha,i,0}=\psi_{\alpha,i},\quad i=0,1,\ldots,N_{x},\quad U_{\alpha,i,m}=U^{(n_{m})}_{\alpha,i,m},
 \end{equation*}
where $U_{i,m}^{(n-1)}=(U_{1,i,m}^{(n-1)},U_{2,i,m}^{(n-1)})$,  $\mathcal{G}_{\alpha,i,m}\left(U_{\alpha,i,m}^{(n-1)},U_{\alpha, i,m-1}, U_{\alpha^{\prime},i,m}^{(n-1)}\right )$,
$\alpha^{\prime}\neq\alpha$,  $\alpha, \alpha^{\prime}=1,2$,
 are defined in (\ref{bp18a}),
   $\mathbf{0}$ is  zero column vector
with the  $N_{x}-1$ components, and $U_{\alpha,i,m}$, $i=0,1,\ldots,N_{x}$, $\alpha=1,2$, are the approximate  solutions on time level
 $m\geq1$, where $n_{m}$ is a number of iterations on time level   $m\geq1$.
For  $\eta=0$ and $\eta=1$, we have, respectively,  the block Jacobi and block  Gauss--Seidel methods.
\begin{remark}\label{bp28}
 Similar to Remark \ref{bp14}, we discuss the mean-value theorem for mesh vector-functions.
Assume that $F_{\alpha}(x,y,t,u_{\alpha}, u_{\alpha^{\prime}})$, $i=0,1,\ldots,N_{x}$, $\alpha\neq \alpha^{\prime}$, $\alpha, \alpha^{\prime}=1,2$, are smooth functions. In
the notation of $ F_{\alpha,i,m}(U_{\alpha,i,m},U_{\alpha^{\prime},i,m}) $ in (\ref{bp18a}),  we have
 \begin{align}\label{bp29}
  & F_{\alpha,i,m}(U_{\alpha,i,m},U_{\alpha^{\prime},i,m})-F_{\alpha,i,m}(V_{\alpha,i,m},U_{\alpha^{\prime},i,m})=\\
  &\frac{\partial F_{\alpha,i,m}(Y_{\alpha,i,m},U_{\alpha^{\prime},i,m})}{\partial  u_{\alpha^{\prime}}}[U_{\alpha,i,m}-V_{\alpha,i,m}],\nonumber
  \end{align}
  \begin{align*}
  & F_{\alpha,i,m}(U_{\alpha,i,m},U_{\alpha^{\prime},i,m})-F_{\alpha,i,m}(U_{\alpha,i,m},V_{\alpha^{\prime},i,m})=\\
  &\frac{\partial F_{\alpha,i,m}(U_{\alpha,i,m},Y_{\alpha^{\prime},i,m})}{\partial  u_{\alpha^{\prime}}}[U_{\alpha^{\prime},i,m}-V_{\alpha^{\prime},i,m}],
  \end{align*}
  where $Y_{\alpha,i,m}$ lie between $U_{\alpha,i,m}$ and $V_{\alpha,i,m}$, and  $Y_{\alpha^{\prime},i,m}$ lie between $U_{\alpha^{\prime},i,m}$ and $V_{\alpha^{\prime},i,m}$, $i=0,1,\ldots,N_{x}$, $ \alpha^{\prime}\neq\alpha $,  $\alpha, \alpha^{\prime}=1,2$, $m\geq1$.
  The partial derivatives $\frac{\partial F_{\alpha,i,m}}{\partial u_{\alpha}}$ and $\frac{\partial F_{\alpha,i,m}}{\partial u_{\alpha^{\prime}}}$, are the diagonal matrices
 \begin{equation*}
 \frac{\partial F_{\alpha,i,m}}{\partial u_{\alpha}}=  \mbox{diag} \left(\frac{\partial f_{\alpha,i,1,m}}{\partial u_{\alpha}}, \ldots, \frac{\partial f_{\alpha,i,N_{y}-1,m}}{\partial u_{\alpha}}\right),
 \end{equation*}
\begin{equation*}
 \frac{\partial F_{\alpha,i,m}}{\partial u_{\alpha^{\prime}}}= \mbox{diag} \left(\frac{\partial f_{\alpha,i,1,m}}{\partial u_{\alpha^{\prime}}}, \ldots, \frac{\partial f_{\alpha,i,N_{y}-1,m}}{\partial u_{\alpha^{\prime}}}\right),
 \end{equation*}
  where $\frac{\partial f_{\alpha,ij,m}}{\partial u_{\alpha}}$ and $\frac{\partial f_{\alpha,ij,m}}{\partial u_{\alpha^{\prime}}} $, $j=1,\ldots, N_{y}-1$, are calculated, respectively, at $Y_{\alpha,i,m}$ and $Y_{\alpha^{\prime},i,m}$,
 $i=1,2,\ldots, N_{x}-1$.
\end{remark}
\begin{theorem}\label{bp30}
Let  $f(p,t_{m},U)$ in (\ref{bp9})  satisfy  (\ref{bp20}) and (\ref{bp21}),
 where    $\widetilde{U}(p,t_{m})=(\widetilde{U}_{1}(p,t_{m}),\widetilde{U}_{2}(p,t_{m}))$ and
 $\widehat{U}(p,t_{m})=(\widehat{U}_{1}(p,t_{m}),\widehat{U}_{2}(p,t_{m}))$ are ordered upper and lower solutions (\ref{bp19}) of (\ref{bp9})
 . Then the upper $\{\overline{U}_{\alpha,i,m}^{(n)}\}$ and lower $\{\underline{U}_{\alpha,i,m}^{(n)}\}$, $i=0,1,\ldots,N_{x}$,
 $\alpha=1,2$, sequences  generated by (\ref{bp27}), with $\overline{U}^{(0)}(p,t_{m})=\widetilde{U}(p,t_{m})$ and
  $\underline{U}^{(0)}(p,t_{m})=\widehat{U}(p,t_{m})$,
 converge monotonically, such that,
\begin{equation}\label{bp31}
   \underline{U}_{\alpha,i,m}^{(n-1)}\leq \underline{U}_{\alpha,i,m}^{(n)}\leq  \overline{U}_{\alpha,i,m}^{(n)}\leq \overline{U}_{\alpha,i,m}^{(n-1)},\ \ i=0,1, \ldots, N_{x},\ \ \alpha=1,2, \ \ m\geq1.
  \end{equation}
\end{theorem}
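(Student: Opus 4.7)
I will prove (\ref{bp31}) by induction on the iteration index $n\ge 1$, combining the maximum principle (Lemma \ref{bp12}) applied line-by-line with the monotone property of $\Gamma_\alpha$ (Lemma \ref{bp23}). The preparatory step is to rewrite the iteration (\ref{bp27}) in the equivalent closed form
\begin{equation*}
(A_{\alpha,i,m}+c_{\alpha,m}I)U^{(n)}_{\alpha,i,m}-\eta L_{\alpha,i,m}U^{(n)}_{\alpha,i-1,m}=\Gamma_\alpha(U^{(n-1)})_{i}+(1-\eta)L_{\alpha,i,m}U^{(n-1)}_{\alpha,i-1,m}+R_{\alpha,i,m}U^{(n-1)}_{\alpha,i+1,m}+\tau^{-1}U_{\alpha,i,m-1}-G^{\ast}_{\alpha,i,m},
\end{equation*}
obtained by adding $(A+cI)U^{(n-1)}-\eta L U^{(n-1)}_{i-1}$ to both sides of (\ref{bp27}) and regrouping through the identity $\Gamma_\alpha=c_\alpha U_\alpha-f_\alpha$. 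This form isolates the only nonlinearity inside a single $\Gamma_\alpha$ and exposes the cross-line coupling through the nonnegative matrices $L$ and $R$.

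For the base case $n=1$, set $W=\overline{U}^{(1)}-\overline{U}^{(0)}=Z^{(1)}$. Its defining equation reads $(A+cI)W_i-\eta L W_{i-1}=-\mathcal{G}_{\alpha,i,m}(\widetilde{U})$, whose right-hand side is nonpositive by the upper-solution inequality (\ref{bp19b}), while the boundary datum is $g-\widetilde{U}\leq 0$. The line-wise maximum principle yields $W\leq 0$, hence $\overline{U}^{(1)}\leq\overline{U}^{(0)}$, and a mirror argument gives $\underline{U}^{(1)}\geq\underline{U}^{(0)}$. For the sandwich $\overline{U}^{(1)}\geq\underline{U}^{(1)}$, I subtract the closed-form equations for $V=\overline{U}^{(1)}-\underline{U}^{(1)}$ and obtain on the right a sum $[\Gamma(\widetilde{U})-\Gamma(\widehat{U})]_i+(1-\eta)L(\widetilde{U}-\widehat{U})_{i-1}+R(\widetilde{U}-\widehat{U})_{i+1}$ of three nonnegative contributions: the $\Gamma$-difference by Lemma \ref{bp23}, the remaining two because $L,R\geq 0$. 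Since $V=0$ on $\partial\Omega^h$, Lemma \ref{bp12} gives $V\geq 0$.

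For the inductive step, assuming (\ref{bp31}) at level $n$, the key algebraic identity is
\begin{equation*}
\mathcal{G}_{\alpha,i,m}(\overline{U}^{(n)})=\Gamma_\alpha(\overline{U}^{(n-1)})_i-\Gamma_\alpha(\overline{U}^{(n)})_i+(1-\eta)L(\overline{U}^{(n-1)}_{i-1}-\overline{U}^{(n)}_{i-1})+R(\overline{U}^{(n-1)}_{i+1}-\overline{U}^{(n)}_{i+1}),
\end{equation*}
obtained by substituting the closed-form iteration at step $n$ into the definition (\ref{bp18a}) of $\mathcal{G}$. Since $\overline{U}^{(n-1)}\geq\overline{U}^{(n)}$ by hypothesis, each signed contribution on the right is nonnegative, so $\mathcal{G}(\overline{U}^{(n)})\geq 0$. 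The equation for $W=\overline{U}^{(n+1)}-\overline{U}^{(n)}$ then has nonpositive right-hand side and zero boundary data, hence $W\leq 0$. The lower-sequence monotonicity and the sandwich at level $n+1$ follow by the same template with the roles of $\overline{U}$ and $\underline{U}$ exchanged.

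The main obstacle is the bookkeeping around the parameter $\eta$ that distinguishes Jacobi from Gauss--Seidel. When $\eta=1$ the coupling term $-LW_{i-1}$ sits on the left of the line equation, so Lemma \ref{bp12} cannot be invoked in one stroke across all lines; instead I must sweep in $i$ from the boundary inward, using the sign of $W_{i-1}$ already established on the previous line to absorb $-LW_{i-1}$ into a sign-preserving right-hand side before applying the line-wise maximum principle. Once this sweep pattern is in place, the rest of the proof reduces to the algebraic identity above together with Lemma \ref{bp23}, and the argument runs uniformly in $\eta\in\{0,1\}$ and $\alpha\in\{1,2\}$.
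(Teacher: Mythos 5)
Your core machinery is sound and is in fact a cleaner packaging of what the paper does: where the paper re-applies the mean-value theorem inline at every stage (expanding $F_{\alpha,i,m}$ differences and invoking (\ref{bp20}), (\ref{bp21}) each time, as in (\ref{bp35})--(\ref{bp38})), you route every nonlinearity through $\Gamma_{\alpha}=c_{\alpha}U_{\alpha}-f_{\alpha}$ and invoke Lemma \ref{bp23} once. Your closed-form rewrite of (\ref{bp27}) and the identity $\mathcal{G}_{\alpha,i,m}(\overline{U}^{(n)})=\Gamma_{\alpha}(\overline{U}^{(n-1)})_{i}-\Gamma_{\alpha}(\overline{U}^{(n)})_{i}+(1-\eta)L(\overline{U}^{(n-1)}_{i-1}-\overline{U}^{(n)}_{i-1})+R(\overline{U}^{(n-1)}_{i+1}-\overline{U}^{(n)}_{i+1})$ both check out, and your handling of the Gauss--Seidel sweep in $i$ (absorbing $-LW_{i-1}$ using the sign already established on the previous line together with $(A+cI)^{-1}\geq O$) is exactly the paper's mechanism. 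One small point of care: Lemma \ref{bp23} is stated with the pointwise bound $c_{\alpha}(p,t_{m})$, while (\ref{bp27}) uses the mesh-maximum $c_{\alpha,m}$ of (\ref{bp26a}); the monotonicity of $\Gamma_{\alpha}$ survives because $c_{\alpha,m}\geq c_{\alpha}(p,t_{m})$, but you should say so, since you apply the lemma with the larger constant.

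The genuine gap is that your proof only covers the first time level. The theorem asserts (\ref{bp31}) for all $m\geq1$, and your base case $n=1$ justifies $-\mathcal{G}_{\alpha,i,m}(\widetilde{U})\leq\mathbf{0}$ by appealing to the upper-solution inequality (\ref{bp19b}). That inequality involves the residual computed with $\widetilde{U}_{\alpha}(p,t_{m-1})$ in the term $-\tau^{-1}U_{\alpha}(p,t_{m-1})$, whereas the iteration (\ref{bp27}) on time level $m\geq2$ is seeded with the \emph{computed} previous-level solution $U_{\alpha,i,m-1}=U^{(n_{m-1})}_{\alpha,i,m-1}$, not with $\widetilde{U}_{\alpha,i,m-1}$. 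You therefore need the additional step --- which occupies the ``On the second time level $m=2$'' passage of the paper's proof --- showing that $\overline{U}_{\alpha,i,m-1}\leq\widetilde{U}_{\alpha,i,m-1}$ (available from (\ref{bp31}) at level $m-1$) implies
\begin{equation*}
\mathcal{G}_{\alpha,i,m}\left(\widetilde{U}_{\alpha,i,m},\overline{U}_{\alpha,i,m-1},\widetilde{U}_{\alpha^{\prime},i,m}\right)\geq\mathcal{G}_{\alpha,i,m}\left(\widetilde{U}_{\alpha,i,m},\widetilde{U}_{\alpha,i,m-1},\widetilde{U}_{\alpha^{\prime},i,m}\right)\geq\mathbf{0},
\end{equation*}
the first inequality holding because the residual is decreasing in its middle argument (coefficient $-\tau^{-1}<0$). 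The same remark applies to the term $\tau^{-1}(\overline{U}_{\alpha,i,m-1}-\underline{U}_{\alpha,i,m-1})$ that appears when you subtract the upper and lower closed-form equations to get the sandwich $\overline{U}^{(n)}\geq\underline{U}^{(n)}$ at levels $m\geq2$: it is nonnegative only by virtue of the ordering already proved at level $m-1$. In short, you must wrap your (correct) induction on $n$ inside an outer induction on $m$; without that outer layer the statement is proved only for $m=1$.
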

\begin{proof}
We consider the case of Gauss-Seidel method $\eta=1$, and the case of the  Jacobi method can be proved by a similar manner.
On first time level $m=1$,
since $\overline{U}^{(0)}$ is an upper solution (\ref{bp19}) with respect to  $U_{\alpha}(p,0)=\psi_{\alpha}(p)$, from (\ref{bp27}), we have
\begin{equation}\label{bp32}
\left( A_{\alpha,i,1}+c_{\alpha,1}I\right)\overline{Z}_{\alpha,i,1}^{(1)}\leq L_{\alpha,i,1}\overline{Z}^{(1)}_{\alpha,i-1,1},\quad i=1,2,\ldots,N_{x}-1,\quad \alpha=1,2,
\end{equation}
where $I$ is the identity matrix. For $i=1$ in (\ref{bp32}), taking into account that $
L_{\alpha,i,1}\geq\mathbf{0}$, $i=1,2,\ldots,N_{x}-1$, and $\overline{Z}^{(1)}_{\alpha,0,1}\leq\mathbf{0}$, it follows that    $\left( A_{\alpha,1,1}+c_{\alpha,1}I\right)\overline{Z}_{\alpha,1,1}^{(1)}\leq\mathbf{0}$.
Taking into account that $d_{\alpha,ij}>0$, $b_{\alpha,ij}$, $t_{\alpha,ij}\geq0$, $\alpha=1,2$, in (\ref{bp17})
and $A_{\alpha,i,1}$ are strictly diagonal dominant matrix, we conclude that  $A_{\alpha,i,1}$, $i=1,2,\ldots,N_{x}-1$, $\alpha=1,2$,
are $M$-matrices  and $A_{\alpha,i,1}^{-1}\geq\emph{O}$ (Corollary 3.20, \cite{Varga2000}),   which leads to $(A_{\alpha,i,1}+c_{\alpha,1}I)^{-1}\geq\emph{O}$, where $\emph{O}$ is the ($N_{y}-1)\times (N_{y}-1$) null  matrix. From here, we obtain
\begin{equation*}
\overline{Z}_{\alpha,1,1}^{(1)}\leq \mathbf{0},\quad \alpha=1,2.
\end{equation*}
Taking into account that $\overline{Z}_{\alpha,1,1}^{(1)}\leq \mathbf{0}$, for $i=2$ in (\ref{bp32}), in a similar manner, we conclude that
 \begin{equation*}
\overline{Z}_{\alpha,2,1}^{(1)}\leq \mathbf{0},\quad \alpha=1,2.
\end{equation*}
By induction on $i$, we can prove that
\begin{equation}\label{bp33}
\overline{Z}_{\alpha,i,1}^{(1)}\leq \mathbf{0},\quad i=0,1,\ldots,N_{x},\quad \alpha=1,2.
\end{equation}
 Similarly, for the lower solution $\underline{U}^{(0)}=\widehat{U}$, we have
 \begin{equation}\label{bp34}
   \underline{Z}_{\alpha,i,1}^{(1)}\geq \mathbf{0},\quad i=0,1,\ldots,N_{x},\quad \alpha=1,2.
 \end{equation}
 We now prove that $\overline{U}^{(1)}_{\alpha,i,1}$ and  $\underline{U}^{(1)}_{\alpha,i,1}$, are ordered upper and lower solutions (\ref{bp19})
 with respect to the  column vector  $U_{\alpha,i,0}=\psi_{\alpha,i}$, where the column vector $\psi_{\alpha,i}$ is associated with the initial
 function $\psi(x,y)$ from (\ref{bp1}). Let $W^{(1)}_{\alpha,i,1}=\overline{U}^{(1)}_{\alpha,i,1}-\underline{U}^{(1)}_{\alpha,i,1}$, $i=0,1,\ldots,N_{x}$, $\alpha=1,2$, from (\ref{bp27})
 for $\alpha=1$, we have
 \begin{align}\label{bp35}
&\left( A_{1,i,1}+c_{1,1}I\right)W_{1,i,1}^{(1)}-L_{1,i,1}W_{1,i-1,1}^{(1)}= c_{1,1}W_{1,i,1}^{(0)}+R_{1,i,1}W_{1,i+1,1}^{(0)}\nonumber \\
  & -\left[F_{1,i,1}(\overline{U}_{1,i,1}^{(0)},\overline{U}_{2,i,1}^{(0)})-F_{1,i,1}(\underline{U}_{1,i,1}^{(0)},\overline{U}_{2,i,1}^{(0)})\right] \nonumber \\
  -& \left[F_{1,i,1}(\underline{U}_{1,i,1}^{(0)},\overline{U}_{2,i,1}^{(0)})-F_{1,i,1}(\underline{U}_{1,i,1}^{(0)},\underline{U}_{2,i,1}^{(0)})\right] \nonumber,\\
  & i=1,2,\ldots,N_{x}-1,\quad W^{(1)}_{1,i,1}=\mathbf{0},\quad i=0,N_{x}.
  \end{align}
  By the mean-value theorem (\ref{bp29}), we have
\begin{eqnarray*}
&&F_{1,i,1}(\overline{U}_{1,i,1}^{(0)},\overline{U}_{2,i,1}^{(0)})-F_{1,i,1}(\underline{U}_{1,i,1}^{(0)},\overline{U}_{2,i,1}^{(0)})=\\
&&\frac{\partial F_{1,i,1}(Q_{1,i,1}^{(0)},\overline{U}_{2,i,1}^{(0)})}{\partial u_{1}}\left[\overline{U}_{1,i,1}^{(0)}-\underline{U}_{1,i,1}^{(0)}\right],
\end{eqnarray*}
\begin{eqnarray*}
&&F_{1,i,1}(\underline{U}_{1,i,1}^{(0)},\overline{U}_{2,i,1}^{(0)})-F_{1,i,1}(\underline{U}_{1,i,1}^{(0)},\underline{U}_{2,i,1}^{(0)})=\\
&&\frac{\partial F_{1,i,1}(\underline{U}_{1,i,1}^{(0)},Q_{2,i,1}^{(0)})}{\partial u_{2}}\left[\overline{U}_{2,i,1}^{(0)}-\underline{U}_{2,i,1}^{(0)}\right],
\end{eqnarray*}
where $\underline{U}_{\alpha,i,1}^{(0)}\leq Q_{\alpha,i,1}^{(0)}\leq \overline{U}_{\alpha,i,1}^{(0)}$, $i=1,2,\ldots,N_{x}-1$, $\alpha=1,2$, and
\begin{equation*}
\frac{\partial F_{1,i,1}(Q_{1,i,1}^{(0)},\overline{U}_{2,i,1}^{(0)})}{\partial u_{1}}= \quad\quad\quad\quad\quad\quad\quad\quad\quad\quad\quad\quad\quad\quad\quad\quad\quad\quad\quad\quad\quad\quad\
\end{equation*}
\begin{equation*}
 \mbox{diag} \left(\frac{\partial f_{1,i,1,1}}{\partial u_{1}}(Q_{1,i,1,1}^{(0)},\overline{U}_{2,i,1,1}^{(0)}),\ldots,\frac{\partial f_{1,i,N_{y}-1,1}}{\partial u_{1}}(Q_{1,i,N_{y}-1,1}^{(0)},\overline{U}_{2,i,N_{y}-1,1}^{(0)}) \right),
\end{equation*}
\begin{equation*}
\frac{\partial F_{1,i,1}(\overline{U}_{1,i,1}^{(0)},Q_{2,i,1}^{(0)})}{\partial u_{2}}= \quad\quad\quad\quad\quad\quad\quad\quad\quad\quad\quad\quad\quad\quad\quad\quad\quad\quad\quad\quad\quad\quad\
\end{equation*}
\begin{equation*}
 \mbox{diag} \left(\frac{\partial f_{1,i,1,1}}{\partial u_{2}}(\overline{U}_{1,i,1,1}^{(0)},Q_{2,i,1,1}^{(0)}),\ldots,\frac{\partial f_{1,i,N_{y}-1,1}}{\partial u_{2}}(\overline{U}_{1,i,N_{y}-1,1}^{(0)},Q_{2,i,N_{y}-1,1}^{(0)})\right).
\end{equation*}
From here, we conclude that $\frac{\partial F_{1,i,1}}{\partial u_{1}}$, $\frac{\partial F_{1,i,1}}{\partial u_{2}}$
 satisfy (\ref{bp20}) and (\ref{bp21}). From here and (\ref{bp35}), we have
 \begin{eqnarray}\label{bp36}
 \left(A_{1,i,1}+c_{1,1}I\right)W_{1,i,1}^{(1)}-L_{1,i,1}W_{1,i-1,1}^{(1)}&=&\left(c_{1,1}-\frac{\partial F_{1,i,1}}{\partial u_{1}}\right)W_{1,i,1}^{(0)}\\
 &&- \frac{\partial F_{1,i,1}}{\partial u_{2}}W_{2,i,1}^{(0)}+ R_{1,i,1}W_{1,i+1,1}^{(0)}, \nonumber
 \end{eqnarray}
 \begin{equation*}
  i=1,2,\ldots,N_{x}-1,\quad W_{1,i,1}^{(1)}=\mathbf{0},\quad i=0,N_{x}.
 \end{equation*}
  From here, (\ref{bp20}), (\ref{bp21}), taking into account that $W_{\alpha,i,1}^{(0)}\geq \mathbf{0}$, $i=0,1,\ldots,N_{x}$, $\alpha=1,2$, and $R_{1,i,1}\geq \emph{O}$, we obtain
\begin{equation}\label{bp37}
  \left(A_{1,i,1}+c_{1,1}I\right)W_{1,i,1}^{(1)}\geq L_{1,i,1}W_{1,i-1,1}^{(1)},\quad i=1,2,\ldots,N_{x}-1,
\end{equation}
\begin{equation*}
  W_{1,i,1}^{(1)}=\mathbf{0},\quad i=0,N_{x}.
\end{equation*}
Taking into account that $(A_{1,i,1}+c_{1,1}I)^{-1}\geq \emph{O}$ (Corollary 3.20, \cite{Varga2000}), $i=1,2,\ldots,N_{x}-1$,
for $i=1$ in (\ref{bp37}) and $W_{1,0,1}^{(1)}=\mathbf{0}$, we conclude that $W_{1,1,1}^{(1)}\geq \mathbf{0}$. For $i=2$ in (\ref{bp37}), using $L_{1,2,1}\geq\emph{O}$ and $W_{1,1,1}^{(1)}\geq\mathbf{0}$, we obtain $W_{1,2,1}^{(1)}\geq \mathbf{0}$. Thus, by induction on $i$, we prove that
\begin{equation*}
W_{1,i,1}^{(1)}\geq \mathbf{0},\quad i=0,1,\ldots,N_{x}.
\end{equation*}
By a similar argument, we can prove that
\begin{equation*}
W_{2,i,1}^{(1)}\geq \mathbf{0},\quad i=0,1,\ldots,N_{x}.
\end{equation*}
Thus, we prove  (\ref{bp19a}).  We now prove (\ref{bp19b}). From (\ref{bp27}) for $\alpha=1$ and using the mean-value theorem (\ref{bp29}), we conclude that
\begin{eqnarray}\label{bp38}
&&\mathcal{G}_{1,i,1}\left(\overline{U}_{1,i,1}^{(1)}, \psi_{1,i}, \overline{U}_{2,i,1}^{(1)}\right)=\\
&&-\left(c_{1,1}-\frac{\partial F_{1,i,1}(\overline{E}^{(1)}_{1,i,1},\overline{U}^{(0)}_{2,i,1})}{\partial u_{1}}\right) \overline{Z}_{1,i,1}^{(1)}+\frac{\partial F_{1,i,1}(\overline{U}^{(0)}_{1,i,1},\overline{E}^{(1)}_{2,i,1})}{\partial u_{2}}\overline{Z}_{2,i,1}^{(1)}\nonumber \\
&&-R_{1,i,1}\overline{Z}_{1,i+1,1}^{(1)},\quad  i=1,2, \ldots, N_{x}-1, \nonumber
\end{eqnarray}
where
\begin{equation*}
\overline{U}^{(1)}_{\alpha,i,1}\leq \overline{E}^{(1)}_{\alpha,i,1}\leq \overline{U}^{(0)}_{\alpha,i,1},\quad i=0,1,\ldots,N_{x},\quad \alpha=1,2.
\end{equation*}
 From (\ref{bp33}), (\ref{bp34}), taking into account that $\underline{U}^{(1)}_{\alpha,i,1}\leq \overline{U}^{(1)}_{\alpha,i,1}$, $\alpha=1,2$, we conclude that $\frac{\partial F_{1,i,1}}{\partial u_{1}}$ and $\frac{\partial F_{1,i,1}}{\partial u_{2}}$ satisfy
 (\ref{bp20}) and (\ref{bp21}). From (\ref{bp20}), (\ref{bp21}), (\ref{bp33}) and taking into account that $R_{1,i,1}\geq \emph{O}$, $i=1,2,\ldots,N_{x}-1$, we conclude that
 \begin{equation}\label{bp39}
\mathcal{G}_{1,i,1}\left(\overline{U}_{1,i,1}^{(1)}, \psi_{1,i}, \overline{U}_{2,i,1}^{(1)}\right)\geq \mathbf{0},\quad i=1,2,\ldots,N_{x}-1,
 \end{equation}
 Similarly, we obtain
 \begin{equation*}
\mathcal{G}_{2,i,1}\left(\overline{U}_{2,i,1}^{(1)}, \psi_{2,i},\overline{U}_{1,i,1}^{(1)} \right)\geq \mathbf{0},\quad i=1,2,\ldots,N_{x}-1,
 \end{equation*}
 which means that $\overline{U}_{\alpha,i,1}^{(1)}$, $i=0,1, \ldots, N_{x}$, $\alpha=1,2$, are  upper solution  (\ref{bp19b}) on $m=1$. By a similar manner, we can prove that
 \begin{equation*}
\mathcal{G}_{\alpha,i,1}\left(\underline{U}_{\alpha,i,1}^{(1)}, \psi_{\alpha,i}, \underline{U}_{\alpha^{\prime},i,1}^{(1)}\right)\leq \mathbf{0},\quad i=1,2,\ldots,N_{x}-1,\quad \alpha=1,2,
 \end{equation*}
 which means that $\underline{U}_{\alpha,i,1}^{(1)}$, $i=0,1, \ldots, N_{x}$,  $\alpha=1,2$, are lower solutions (\ref{bp19b}) on $m=1$. By induction on $n$, we  prove (\ref{bp31})
 on the  first time level $m=1$.

On the second time level $m=2$, taking into account that $\overline{U}_{\alpha,i,2}^{(0)}=\widetilde{U}_{\alpha,i,2}$, $i=0,1, \ldots, N_{x}$,  $\alpha=1,2$, from
(\ref{bp9}), we obtain
\begin{align*}
&\mathcal{G}_{\alpha,i,2}\left(\widetilde{U}_{\alpha,i,2},\overline{U}_{\alpha,i,1}, \widetilde{U}_{\alpha^{\prime},i,2}\right )= \\&A_{\alpha,i,2}\widetilde{U}_{\alpha,i,2}-L_{\alpha,i,2}\widetilde{U}_{\alpha,i-1,2}
-R_{\alpha,i,2}\widetilde{U}_{\alpha,i+1,2}+F_{\alpha,i,2}(\widetilde{U}_{i,2})-\tau^{-1}\overline{U}_{\alpha,i,1} \\&+G_{\alpha,i,2}^{*},\quad i=1,2\ldots,N_{x}-1,\quad
\alpha^{\prime}\neq \alpha,\quad  \alpha, \alpha^{\prime}=1,2,
\end{align*}
where $\overline{U}_{\alpha,i,1}$, $i=1,2,\ldots,N_{x}-1$, $\alpha=1,2$,  are the approximate  solutions on the first time level $m=1$,
which defined in (\ref{bp27}).
From here, taking into account that from (\ref{bp31}), we have   $\overline{U}_{\alpha,i,1}\leq \widetilde{U}_{\alpha,i,1}$, $i=0,1,\ldots,N_{x}$, $\alpha=1,2$,
it follows that
\begin{eqnarray*}
&&  \mathcal{G}_{\alpha,i,2}\left(\widetilde{U}_{\alpha,i,2},\overline{U}_{\alpha,i,1}, \widetilde{U}_{\alpha^{\prime},i,2}\right )\geq \mathcal{G}_{\alpha,i,2}\left(\widetilde{U}_{\alpha,i,2},\widetilde{U}_{\alpha,i,1},\widetilde{U}_{\alpha^{\prime},i,2}\right )\geq\mathbf{0},\\
&& i=1,2, \ldots,N_{x}-1,\quad \alpha^{\prime}\neq \alpha,\quad  \alpha, \alpha^{\prime}=1,2,
\end{eqnarray*}
which means that  $\overline{U}_{\alpha,i,2}^{(1)}=\widetilde{U}_{\alpha,i,2}$, $i=0,1,\ldots,N_{x}$, $\alpha=1,2$, are  upper
solutions with respect to $\overline{U}_{\alpha,i,1}$, $i=0,1,\ldots,N_{x}$, $\alpha=1,2$. Similarly, we can obtain that
\begin{equation*}
  \mathcal{G}_{\alpha,i,2}\left(\widehat{U}_{\alpha,i,2},\underline{U}_{\alpha,i,1}, \widehat{U}_{\alpha^{\prime},i,2}\right )\leq\mathbf{0},\ \  i=1,2,\ldots,N_{x}-1,\ \
  \alpha^{\prime}\neq \alpha,\ \  \alpha, \alpha^{\prime}=1,2,
\end{equation*}
which means that  $\underline{U}_{\alpha,i,2}^{(1)}=\widehat{U}_{\alpha,i,2}$, $i=0,1,\ldots,N_{x}$, $\alpha=1,2$, are  lower
solutions with respect to $\underline{U}_{\alpha,i,1}$, $i=0,1,\ldots,N_{x}$, $\alpha=1,2$.

From here and  (\ref{bp27}), on the  second time level $m=2$, we obtain
\begin{equation}\label{bp47}
\left( A_{\alpha,i,2}+c_{\alpha,2}I\right)\overline{Z}_{\alpha,i,2}^{(1)}\leq L_{\alpha,i,2}\overline{Z}^{(1)}_{\alpha,i-1,2},\quad i=1,2,\ldots,N_{x}-1,\quad \alpha=1,2.
\end{equation}
Taking into account that $d_{\alpha,ij}>0$, $b_{\alpha,ij}$, $t_{\alpha,ij}\geq0$, $(x_{i},y_{j})\in \Omega^{h}$, $\alpha=1,2$,
in (\ref{bp17}) and $A_{\alpha,i,2}$, $i=1,2,\ldots,N_{x}-1$, $\alpha=1,2$, are strictly diagonal dominant matrix,
 we conclude that  $A_{\alpha,i,2}+c_{\alpha,2}I$, $i=1,2,\ldots,N_{x}-1$, $\alpha=1,2$,
 are $M$-matrices  and $(A_{\alpha,i,2}+c_{\alpha,2}I)^{-1}\geq\emph{O}$, $i=1,2,\ldots,N_{x}-1$, $\alpha=1,2$,
 (Corollary 3.20, \cite{Varga2000}). From here, for $i=1$ in (\ref{bp47}),  taking into account that
$L_{\alpha,i,2}\geq\emph{O}$, $i=1,2,\ldots,N_{x}-1$, and   $\overline{Z}^{(1)}_{\alpha,0,2}\leq\mathbf{0}$ from (\ref{bp27}), we obtain that
\begin{equation*}
\overline{Z}_{\alpha,1,2}^{(1)}\leq \mathbf{0},\quad \alpha=1,2.
\end{equation*}
From here, for $i=2$ in (\ref{bp47}), we conclude that
 \begin{equation*}
\overline{Z}_{\alpha,2,2}^{(1)}\leq \mathbf{0},\quad \alpha=1,2.
\end{equation*}
By induction on $i$, we can prove that
\begin{equation}\label{bp48}
\overline{Z}_{\alpha,i,2}^{(1)}\leq \mathbf{0},\quad i=0,1,\ldots,N_{x},\quad \alpha=1,2.
\end{equation}
 Similarly, for the lower case, we can prove that
 \begin{equation}\label{bp49}
   \underline{Z}_{\alpha,i,2}^{(1)}\geq \mathbf{0},\quad i=0,1,\ldots,N_{x},\quad \alpha=1,2.
 \end{equation}
The proof that $\overline{U}^{(1)}_{\alpha,i,2}$ and  $\underline{U}^{(1)}_{\alpha,i,2}$, $\alpha=1,2$,
are ordered upper and lower solutions (\ref{bp19})
 repeats  the proof on the first
 time level $m=1$. By induction on $n$, we can prove  $(\ref{bp31})$ for $m=1$.
By induction on $m$, we can prove (\ref{bp31}) for  $m\geq1$.

\end{proof}
 \subsection{Existence and uniqueness of  a solution to the nonlinear difference   scheme (\ref{bp18})}
In the following theorem, we prove the existence of a solution to  (\ref{bp18}) based on Theorem \ref{bp30}.
\begin{theorem}\label{bp55}
 Let  $f(p,t_{m},U)$ satisfy (\ref{bp20}), where $\widetilde{U}_{\alpha,i,m}$ and $\widehat{U}_{\alpha,i,m}$,
  $i=0,1\ldots,N_{x}$, $\alpha=1,2$,
   $m\geq1$, be ordered upper and lower solutions (\ref{bp19}) to (\ref{bp18}).
  Then a solution of the nonlinear  difference  scheme (\ref{bp18})
  exists in $\langle\widehat{U}(t_{m}),\widetilde{U}(t_{m})\rangle$, $m\geq1$.
  \end{theorem}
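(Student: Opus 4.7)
The plan is to apply Theorem \ref{bp30} to construct the solution as the limit of the monotone upper/lower iterative sequences, proceeding by induction on the time level $m$.

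Fixing $m \geq 1$ and starting the iteration (\ref{bp27}) with $\overline{U}_{\alpha,i,m}^{(0)} = \widetilde{U}_{\alpha,i,m}$ and $\underline{U}_{\alpha,i,m}^{(0)} = \widehat{U}_{\alpha,i,m}$, Theorem \ref{bp30} yields the monotone enclosure
\begin{equation*}
\widehat{U}_{\alpha,i,m} \leq \underline{U}_{\alpha,i,m}^{(n)} \leq \overline{U}_{\alpha,i,m}^{(n)} \leq \widetilde{U}_{\alpha,i,m}, \quad n \geq 1.
\end{equation*}
Each component of $\{\overline{U}_{\alpha,i,m}^{(n)}\}_n$ is therefore a nonincreasing real sequence bounded below, and each component of $\{\underline{U}_{\alpha,i,m}^{(n)}\}_n$ is nondecreasing and bounded above, so the monotone convergence theorem produces componentwise limits $\overline{U}_{\alpha,i,m}^{*}$ and $\underline{U}_{\alpha,i,m}^{*}$ lying in $\langle\widehat{U}(t_m),\widetilde{U}(t_m)\rangle$.

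Next I pass to the limit in (\ref{bp27}). Writing $Z_{\alpha,i,m}^{(n)} = U_{\alpha,i,m}^{(n)} - U_{\alpha,i,m}^{(n-1)}$, monotone convergence forces $Z_{\alpha,i,m}^{(n)} \to \mathbf{0}$ as $n \to \infty$, so the left-hand side of (\ref{bp27}) vanishes in the limit; continuity of the residual $\mathcal{G}_{\alpha,i,m}$ defined in (\ref{bp18a}), inherited from smoothness of $f$, lets the right-hand side converge. This gives
\begin{equation*}
\mathcal{G}_{\alpha,i,m}\bigl(\overline{U}_{\alpha,i,m}^{*},\, \overline{U}_{\alpha,i,m-1}^{*},\, \overline{U}_{\alpha^{\prime},i,m}^{*}\bigr) = \mathbf{0}, \quad i = 1,\dots,N_x-1,\ \alpha = 1, 2,
\end{equation*}
which is precisely (\ref{bp18}) at level $m$. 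Boundary and initial data hold for every iterate of (\ref{bp27}) and therefore persist in the limit, and the same argument applied to the lower sequence shows that $\underline{U}_{\alpha,i,m}^{*}$ is a solution as well.

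The induction on $m$ is closed by the observation already used in the final paragraphs of the proof of Theorem \ref{bp30}: because $\overline{U}_{\alpha,i,m-1}^{*} \leq \widetilde{U}_{\alpha,i,m-1}$ and the residual depends on the previous-level value only through the term $-\tau^{-1}U_{\alpha,i,m-1}$, the inequality (\ref{bp19b}) is only strengthened when $\widetilde{U}_{\alpha,i,m-1}$ is replaced by the smaller limit $\overline{U}_{\alpha,i,m-1}^{*}$; symmetrically $\widehat{U}(t_m)$ remains a lower solution relative to $\underline{U}_{\alpha,i,m-1}^{*}$. Hence the hypotheses of Theorem \ref{bp30} continue to hold at level $m$, and the only delicate point in the argument is this propagation of the ordered upper/lower pair across time steps. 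The limit passage itself is an elementary finite-dimensional combination of monotone convergence and continuity, requiring no compactness or fixed-point machinery.
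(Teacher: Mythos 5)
Your proposal is correct and follows essentially the same route as the paper: construct the solution as the monotone limit of the sequences from Theorem \ref{bp30}, pass to the limit in the iteration using $Z^{(n)}\to\mathbf{0}$, and propagate the ordered upper/lower solution property to the next time level via the fact that the residual depends on the previous-level value only through $-\tau^{-1}U_{\alpha,i,m-1}$. The only cosmetic difference is that you take the limit directly in (\ref{bp27}) by continuity of $\mathcal{G}_{\alpha,i,m}$, whereas the paper re-expands the residual at iterate $n$ via the mean-value theorem (its equation (\ref{bp57})) before letting $n\to\infty$; both are valid.
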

\begin{proof}
We consider the upper case of the  Gauss--Seidel method $(\eta=1)$ in (\ref{bp27}).
 On the first time level $t_{1}$, from (\ref{bp31}), we conclude that
  $\lim \overline{U}^{(n)}_{\alpha,i,1}=\overline{V}_{\alpha,i,1}$, $i=0,1,\ldots,N_{x}$, $\alpha=1,2$,
  as $n\rightarrow \infty$ exists, and
  \begin{eqnarray}\label{bp56}
&&\overline{V}_{\alpha,i,1}\leq   \overline{U}^{(n)}_{\alpha,i,1}\leq
\overline{U}^{(n-1)}_{\alpha,i,1}\leq \widetilde{U}_{\alpha,i,1} ,\quad \lim_{n \rightarrow \infty} \overline{Z}^{(n)}_{\alpha,i,1}=\mathbf{0},\quad i=0,1,\ldots,N_{x},\nonumber\\
&& \alpha=1,2,
  \end{eqnarray}
  where $\overline{U}^{(0)}_{\alpha,i,1}=\widetilde{U}_{\alpha,i,1}$. Similar to (\ref{bp38}), we have
  \begin{eqnarray}\label{bp57}
&&\mathcal{G}_{\alpha,i,1}\left(\overline{U}_{\alpha,i,1}^{(n)}, \psi_{\alpha,i}, \overline{U}_{\alpha^{\prime},i,1}^{(n)}\right)=
-\left(c_{\alpha,1}-\frac{\partial F_{\alpha,i,1}(\overline{E}^{(n)}_{\alpha,i,1},\overline{U}^{(n-1)}_{\alpha^{\prime},i,1})}{\partial u_{1}}\right) \overline{Z}_{\alpha,i,1}^{(n)}\nonumber\\ &&+\frac{\partial F_{\alpha,i,1}(\overline{U}^{(n-1)}_{\alpha,i,1},\overline{E}^{(1)}_{\alpha^{\prime},i,1})}{\partial u_{2}}\overline{Z}_{\alpha^{\prime},i,1}^{(n)}-R_{\alpha,i,1}\overline{Z}_{\alpha,i+1,1}^{(n)}, \\
&& i=1,2, \ldots, N_{x}-1,\quad \alpha^{\prime}\neq \alpha,\quad  \alpha, \alpha^{\prime}=1,2,\nonumber
\end{eqnarray}
where
\begin{equation*}
\overline{U}^{(n)}_{\alpha,i,1}\leq \overline{E}^{(n)}_{\alpha,i,1}\leq \overline{U}^{(n-1)}_{\alpha,i,1},\quad i=0,1,\ldots,N_{x},\quad \alpha=1,2.
\end{equation*}
From here and  (\ref{bp56}), we conclude that $\overline{V}_{\alpha,i,1}$, $i=0,1,\ldots,N_{x}$,  $\alpha=1,2$,  solve (\ref{bp18}) on
the first time level $m=1$.

By the assumption of the theorem that $\widetilde{U}_{\alpha,i,2}$, $i=0,1,\ldots,N_{x}$, $\alpha=1,2$,
are upper solutions and from (\ref{bp56}), it follows that $\widetilde{U}_{\alpha,i,2}$, $i=0,1,\ldots,N_{x}$, $\alpha=1,2$, are upper
solutions with respect to $\overline{V}_{\alpha,i,1}$, $i=0,1,\ldots,N_{x}$, $\alpha=1,2$. Indeed, from (\ref{bp56}),  we have
\begin{align*}
&\mathcal{G}_{\alpha,i,2}\left(\widetilde{U}_{\alpha,i,2},\overline{V}_{\alpha,i,1}, \widetilde{U}_{\alpha^{\prime},i,2}\right )= \\&A_{\alpha,i,2}\widetilde{U}_{\alpha,i,2}-L_{\alpha,i,2}\widetilde{U}_{\alpha,i-1,2}
-R_{\alpha,i,2}\widetilde{U}_{\alpha,i+1,2}+F_{\alpha,i,2}(\widetilde{U}_{i,2})-\tau^{-1}\overline{V}_{\alpha,i,1} \\&+G_{\alpha,i,2}^{*}\geq \mathcal{G}_{\alpha,i,2}\left(\widetilde{U}_{\alpha,i,2},\widetilde{U}_{\alpha,i,1}, \widetilde{U}_{\alpha^{\prime},i,2}\right )\geq \mathbf{0},\\
&i=1,2\ldots,N_{x}-1,\quad \alpha^{\prime}\neq \alpha,\quad  \alpha, \alpha^{\prime}=1,2.
\end{align*}
Using a similar argument as in (\ref{bp56}), we can prove that the limits
\begin{equation*}
  \lim_{n\rightarrow \infty} \overline{U}^{(n)}_{\alpha,i,2}=\overline{V}_{\alpha,i,2},\quad i=0,1,\ldots,N_{x},\quad \alpha=1,2,
\end{equation*}
exist and solve (\ref{bp18}) on the second time level $m=2$.

By induction on $m$, $m\geq1$, we can prove that
\begin{equation*}
 \lim_{n\rightarrow \infty} \overline{U}^{(n)}_{\alpha,i,m}= \overline{V}_{\alpha,i,m},\quad i=0,1,\ldots,N_{x},\quad \alpha=1,2,\quad m\geq1,
\end{equation*}
are solutions of the nonlinear difference scheme (\ref{bp18}). Similarly, we can prove that $\underline{V}_{\alpha,i,m}$ defined by
\begin{equation*}
\lim_{n\rightarrow \infty} \underline{U}^{(n)}_{\alpha,i,m}= \underline{V}_{\alpha,i,m},\quad i=0,1,\ldots,N_{x},\quad \alpha=1,2,\quad m\geq1,
\end{equation*}
are  solutions to the nonlinear difference scheme (\ref{bp18}).
\end{proof}
We now assume that the reaction functions $f_{\alpha}$, $\alpha=1,2$, satisfy the two-sided constrains
\begin{equation}\label{bp61}
  \underline{c}_{\alpha}(p,t_{m})\leq \frac{\partial f_{\alpha}(p,t_{m},U)}{\partial u_{\alpha}}\leq c_{\alpha}(p,t_{m}),\quad
  U\in\langle \widehat{U}(t_{m}), \widetilde{U}(t_{m})\rangle, \quad p\in\overline{\Omega}^{h},
\end{equation}
\begin{equation}\label{bp61a}
   0\leq - \frac{\partial f_{\alpha}(p,t_{m},U)}{\partial u_{\alpha^{\prime}}}\leq q_{\alpha}(p,t_{m}),\quad U\in\langle \widehat{U}(t_{m}), \widetilde{U}(t_{m})\rangle, \quad p\in\overline{\Omega}^{h},
\end{equation}
\begin{equation*}
\alpha^{\prime}\neq \alpha,\quad \alpha, \alpha^{\prime}=1,2,
  \quad m\geq1,
\end{equation*}
where $c_{\alpha}(p,t_{m})$ is defined in (\ref{bp20}), $q_{\alpha}(p,t_{m})$ and  $\underline{c}_{\alpha}(p,t_{m})$, $\alpha=1,2$,
are, respectively, nonnegative bounded and bounded functions.
It is assumed that the time step $\tau$ satisfies the assumptions
\begin{eqnarray}\label{bp62}
&&\tau <\max_{m\geq1} \frac{1}{\beta_{m}}, \\
&&\beta_{m}=\max \left(0,q_{m}-\underline{c}_{m}\right)=  \left\{ \begin{array}{ll}
0,\quad \mbox{if}\  q_{m}- \underline{c}_{m}\leq0,\\ \\
q_{m}- \underline{c}_{m},\quad \mbox{if}\ q_{m}- \underline{c}_{m}>0, \nonumber
\end{array}\right. \\
 && \underline{c}_{m}=\min_{\alpha=1,2}\left[\min_{p\in\overline{\Omega}^{h}}\underline{c}_{\alpha}(p,t_{m})\right],\quad q_{m}=\max_{\alpha=1,2}\|q_{\alpha}(\cdot,t_{m})\|_{\overline{\Omega}^{h}},\nonumber
\end{eqnarray}
the notation of the discrete norm from (\ref{bp13}) is in use.
 When  $\beta_{m}=0$, $m\geq1$, then there is no restriction on  $\tau$.
\begin{theorem}\label{bp63}
 Suppose that functions $f_{\alpha}(p,t_{m},U)$, $\alpha=1,2$, satisfy (\ref{bp61})
 and (\ref{bp61a}),
 where $\widetilde{U}(p,t_{m})$ and $\widehat{U}(p,t_{m})$ are ordered
 upper and lower solutions (\ref{bp19}) of (\ref{bp9}). Let assumptions in
  (\ref{bp62}) on time step $\tau$ be satisfied. Then the  nonlinear
 difference scheme (\ref{bp9}) has a unique solution.
\end{theorem}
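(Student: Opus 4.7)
My plan is to prove uniqueness by induction on the time level $m\ge1$, reducing the problem at each level to a linear one for the difference of two hypothetical solutions and then using the a priori estimate of Lemma~\ref{bp12}(ii) as a contraction. Existence in the sector is already given by Theorem~\ref{bp55}, and the initial datum $U(p,0)=\psi(p)$ makes the base of the induction trivial.

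For the inductive step, suppose $U^{(1)}$ and $U^{(2)}$ are two solutions of (\ref{bp9}) at time $t_m$, both in $\langle\widehat{U}(t_m),\widetilde{U}(t_m)\rangle$. By the induction hypothesis $U^{(1)}(p,t_{m-1})=U^{(2)}(p,t_{m-1})$, so the term $\tau^{-1}U_\alpha(p,t_{m-1})$ cancels when the two copies of (\ref{bp9}) are subtracted; the boundary data $g(p,t_m)$ is prescribed exactly, so $W_\alpha:=U_\alpha^{(1)}-U_\alpha^{(2)}$ vanishes on $\partial\Omega^{h}$. Writing the $f$--difference by the mean-value theorem (\ref{bp16}) along the two coordinates separately, as was done in Lemma~\ref{bp23}, yields a linear problem of the form
\begin{equation*}
\bigl(\mathcal{L}^{h}_{\alpha}(p,t_m)+(\tau^{-1}+a_\alpha)I\bigr)W_\alpha=-\frac{\partial f_\alpha}{\partial u_{\alpha'}}W_{\alpha'},\quad p\in\Omega^{h},\quad W_\alpha=0,\ p\in\partial\Omega^{h},
\end{equation*}
where $a_\alpha=\partial f_\alpha/\partial u_\alpha$ is evaluated at some intermediate value; the bounds (\ref{bp61}) and (\ref{bp61a}) give $\underline{c}_\alpha\le a_\alpha\le c_\alpha$ and $0\le -\partial f_\alpha/\partial u_{\alpha'}\le q_\alpha$.

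Now I apply Lemma~\ref{bp12}(ii) with $k_\alpha=a_\alpha$ and $\varphi_\alpha=-(\partial f_\alpha/\partial u_{\alpha'})W_{\alpha'}$; this gives
\begin{equation*}
\|W_\alpha\|_{\overline{\Omega}^{h}}\le\left\|\frac{-(\partial f_\alpha/\partial u_{\alpha'})W_{\alpha'}}{a_\alpha+\tau^{-1}}\right\|_{\Omega^{h}}\le\frac{q_m}{\underline{c}_m+\tau^{-1}}\|W_{\alpha'}\|_{\overline{\Omega}^{h}},\quad\alpha=1,2.
\end{equation*}
Setting $M=\max(\|W_1\|_{\overline{\Omega}^{h}},\|W_2\|_{\overline{\Omega}^{h}})$ and iterating yields $M\le q_m(\underline{c}_m+\tau^{-1})^{-1}M$. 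Assumption (\ref{bp62}) is exactly what is needed to make the contraction factor strictly less than one: if $\beta_m=0$, so that $q_m\le\underline{c}_m$, the ratio is $<1$ for every $\tau>0$; if $\beta_m>0$, then $\tau<1/\beta_m$ is equivalent to $\tau^{-1}+\underline{c}_m>q_m$. In either case $M=0$, proving $U^{(1)}=U^{(2)}$ at $t_m$.

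The main technical obstacle is that $\underline{c}_\alpha$ may be negative, so $a_\alpha$ is not automatically a nonnegative mesh function as required in the statement of Lemma~\ref{bp12}. What actually matters in the proof of that lemma is that the shifted operator $\mathcal{L}^{h}_{\alpha}+(\tau^{-1}+a_\alpha)I$ remains a strictly diagonally dominant $M$--matrix and that $\tau^{-1}+a_\alpha>0$ pointwise, so that the bound by $\|\varphi_\alpha/(k_\alpha+\tau^{-1})\|$ is meaningful. Both are ensured by the restriction (\ref{bp62}) through the estimate $\tau^{-1}+a_\alpha\ge\tau^{-1}+\underline{c}_m>q_m\ge0$, which I would verify at the start of the inductive step before invoking Lemma~\ref{bp12}.
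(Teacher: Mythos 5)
Your proof is correct and follows essentially the same route as the paper's: subtract the two copies of (\ref{bp9}), apply the mean-value theorem (\ref{bp16}), invoke the estimate (\ref{bp13}) with $k_{\alpha}=\partial f_{\alpha}/\partial u_{\alpha}$ to obtain the contraction factor $\tau q_{m}/(1+\tau\underline{c}_{m})$, and use (\ref{bp62}) together with induction on $m$. The only differences are cosmetic: you compare two arbitrary solutions in the sector directly rather than the extremal limits $\overline{V},\underline{V}$ of Theorem \ref{bp55}, and you explicitly justify applying Lemma \ref{bp12} when $\underline{c}_{m}$ may be negative (a point the paper passes over silently), which is a welcome refinement.
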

\begin{proof}
  To prove the uniqueness of a solution to the  nonlinear difference scheme (\ref{bp9}), it suffices to prove that
  \begin{equation*}
    \overline{V}_{\alpha}(p,t_{m})=\underline{V}_{\alpha}(p,t_{m}),\quad p\in \overline{\Omega}^{h},\quad \alpha=1,2,\quad m\geq1,
  \end{equation*}
  where $\overline{V}_{\alpha}(p,t_{m})$ and $\underline{V}_{\alpha}(p,t_{m})$ are the  solutions  to
  the  nonlinear difference scheme (\ref{bp9}), which are   defined in Theorem \ref{bp55}. From (\ref{bp31}) and Theorem \ref{bp55}, we obtain
  \begin{eqnarray}\label{bp64}
 &&  \underline{U}_{\alpha}^{(n)}(p,t_{m})\leq \underline{V}_{\alpha}(p,t_{m})\leq \overline{V}_{\alpha}(p,t_{m})\leq \overline{U}^{(n)}_{\alpha}(p,t_{m}),\quad p\in\overline{\Omega}^{h},\ \alpha=1,2,\nonumber\\
 &&m\geq1.
  \end{eqnarray}
  Letting $W_{\alpha}(p,t_{m})=\overline{V}_{\alpha}(p,t_{m})-\underline{V}_{\alpha}(p,t_{m})$, from (\ref{bp9}), we have
  \begin{eqnarray*}
&&\left(\mathcal{L}_{\alpha}^{h}(p,t_{m})+\tau^{-1}\right)W_{\alpha}(p,t_{m}) +f_{\alpha}(p,t_{m},\overline{V})- f_{\alpha}(p,t_{m},\underline{V})
\\
&&-\tau^{-1}W_{\alpha}(p,t_{m-1})=0,\quad p\in \Omega^{h},\quad W_{\alpha}(p,t_{m})=0,\quad p\in \partial \Omega^{h},\quad m\geq1.
\end{eqnarray*}
Using the mean-value theorem (\ref{bp16}), we obtain
\begin{eqnarray}\label{bp64a}
&&\left(\mathcal{L}_{\alpha}^{h}(p,t_{m})+\left(\tau^{-1}+\frac{\partial f_{\alpha}(p,t_{m},H_{\alpha})}{\partial u_{\alpha}}\right)\right)W_{\alpha}(p,t_{m})
=\\
&&-\frac{\partial f_{\alpha}(p,t_{m},H_{\alpha^{\prime}})}{\partial u_{\alpha^{\prime}}}W_{\alpha^{\prime}}(p,t_{m})+\tau^{-1}W_{\alpha}(p,t_{m-1}),
\quad p\in \Omega^{h}, \nonumber\\
&&W_{\alpha}(p,t_{m})=0,\quad (p,t_{m})\in \partial \Omega^{h\tau},\quad \underline{V}_{\alpha}(p,t_{m})\leq H_{\alpha}(p,t_{m})\leq \overline{V}_{\alpha}(p,t_{m}),\nonumber\\
&& \alpha^{\prime}\neq \alpha,\quad \alpha, \alpha^{\prime}=1,2.\nonumber
\end{eqnarray}
 From here and (\ref{bp64}), it follows that the partial derivatives satisfy (\ref{bp61}) and (\ref{bp61a}).
 If $\underline{c}_{m}\geq0$, from (\ref{bp64a}) for $m=1$,  using (\ref{bp13}),  (\ref{bp61}), (\ref{bp61a}) and
 taking into account that $W_{\alpha}(p,0)=0$, we conclude that
 \begin{equation*}
   W(t_{1})\leq\frac{\tau q_{1}}{1+\tau \underline{c}_{1}}W(t_{1}),
 \end{equation*}
where
\begin{eqnarray*}
&&  W(t_{m})=\max_{\alpha=1,2}W_{\alpha}(t_{m}),\quad W_{\alpha}(t_{m})=\|W_{\alpha}(\cdot,t_{m})\|_{\overline{\Omega}^{h}},\quad \alpha=1,2, \\
&& \|W_{\alpha}(\cdot,t_{m})\|_{\overline{\Omega}^{h}}=\max_{p\in\Omega^{h}}|W_{\alpha}(p,t_{m})|.
\end{eqnarray*}
From here, by the assumption on $\tau$ in (\ref{bp62}) and taking into account that $W(t_{m})\geq0$, we conclude that $W(t_{1})=0$.

On the second time level $m=2$, from (\ref{bp64a}) and taking into account that $W(t_{1})=0$, by a similar manner, we obtain that $W(t_{2})=0$.
By induction on $m$, we prove that $W(t_{m})=0$, $m\geq1$.
Thus, we prove the theorem when $\underline{c}_{m}\geq0$.

If $\underline{c}_{m}<0$, from (\ref{bp64a}) for $m=1$,  using (\ref{bp13}), (\ref{bp61}) and (\ref{bp61a}),  we conclude that
 \begin{equation*}
   W(t_{1})\leq\frac{\tau q_{1}}{1-\tau | \underline{c}_{1}|}W(t_{1}).
 \end{equation*}
 From here, by the assumption on $\tau$ in (\ref{bp62}) and taking into account that $W(t_{m})\geq0$, we conclude that $W(t_{1})=0$.

On the second time level $m=2$, from (\ref{bp64a}) and taking into account that $W(t_{1})=0$, by a similar manner, we obtain that $W(t_{2})=0$.
By induction on $m$, we prove that $W(t_{m})=0$, $m\geq1$.
Thus, we prove the theorem.
\end{proof}
\subsection{Convergence analysis }
A stopping test for the block monotone iterative methods (\ref{bp27}) is chosen in the following form
\begin{equation}\label{bp65}
 \max_{\alpha=1,2} \left\|\mathcal{G}_{\alpha}\left(U_{\alpha}^{(n)}(\cdot,t_{m}), U_{\alpha}(\cdot,t_{m-1}), U_{\alpha^{\prime}}^{(n)}(\cdot,t_{m})\right)\right\|_{\Omega^{h}}\leq \delta,
 \end{equation}
 \begin{eqnarray*}
  &&  \left\|\mathcal{G}_{\alpha}\left(U_{\alpha}^{(n)}(\cdot,t_{m}), U_{\alpha}(\cdot,t_{m-1}), U_{\alpha^{\prime}}^{(n)}(\cdot,t_{m})\right)\right\|_{\overline{\Omega}^{h}}= \\
&&\max_{p\in \overline{\Omega}^{h}} \left|\mathcal{G}_{\alpha}\left(U_{\alpha}^{(n)}(p,t_{m}), U_{\alpha}(p,t_{m-1}), U_{\alpha^{\prime}}^{(n)}(p,t_{m})\right)\right|,
 \end{eqnarray*}
where $ \mathcal{G}_{\alpha}\left(U_{\alpha}^{(n)}(p,t_{m}), U_{\alpha}(p,t_{m-1}), U_{\alpha^{\prime}}^{(n)}(p,t_{m})\right) $, $\alpha^{\prime}\neq \alpha$, $\alpha, \alpha^{\prime}=1,2$, are
defined in (\ref{bp18a}),  $U^{(n)}_{\alpha}(p,t_{m})$, $p\in\Omega^{h}$, $\alpha=1,2$,  are generated by (\ref{bp27}), and  $\delta$ is a
prescribed accuracy. On each time level $t_{m}$,  $m\geq1$,
we set up $U_{\alpha}(p,t_{m})=U^{(n_{m})}_{\alpha}(p,t_{m})$, $ p \in \Omega^{h}$, $\alpha=1,2$, $\alpha=1,2$,
 such that $m_{n}$
is the minimal subject to (\ref{bp65}).

Instead of (\ref{bp61}), we now assume that
\begin{eqnarray}\label{bp65a}
&&  q\leq \frac{\partial f_{\alpha}(x,y,t,u)}{\partial u_{\alpha}}\leq c_{\alpha}(x,y,t),\quad (x,y,t)\in Q_{T},\quad -\infty<u<\infty, \nonumber \\
&&\alpha=1,2,\quad  q=\max_{m\geq1} q_{m},
\end{eqnarray}
where $q_{m}$ is defined in (\ref{bp62}).
\begin{remark}\label{bp65b}
The assumption $\frac{\partial f_{\alpha}(p,t_{m},U)}{\partial u_{\alpha}}\geq q_{m}>0$, in (\ref{bp65a}) can always be
obtain by a change of variables. Indeed, we introduce the following functions
$z_{\alpha}(x,y,t)=\exp ^{-\lambda t} u_{\alpha}(x,y,t)$, $\alpha=1,2$, where $\lambda$ is a constant. Now, $z=(z_{1},z_{2})$ satisfy (\ref{bp1}) with
\begin{equation*}
  f_{\alpha}^{*}=\lambda z_{\alpha}+\exp^{-\lambda t} f_{\alpha}(x,y,t,\exp^{\lambda t}z),
\end{equation*}
instead of $f_{\alpha}$, $\alpha=1,2$, and we have
\begin{equation*}
  \frac{\partial f^{*}_{\alpha}}{\partial  z_{\alpha}}=\lambda+\frac{\partial f_{\alpha}}{\partial u_{\alpha}},\quad
  \frac{\partial f^{*}_{\alpha}}{\partial  z_{\alpha^{\prime}}}=\frac{\partial f_{\alpha}}{\partial u_{\alpha^{\prime}}},\quad \alpha^{\prime}\neq \alpha,\quad
  \alpha, \alpha^{\prime}=1,2.
\end{equation*}
Thus, if $\lambda\geq \max_{m\geq1}\left(q_{m}, |\underline{c}_{m}|\right)$, where $q_{m}$ and $\underline{c}_{m}$ are defined in (\ref{bp62}),
 then from this, (\ref{bp61}) and (\ref{bp61a}), we conclude that $ \frac{\partial f^{*}_{\alpha}}{\partial  z_{\alpha}}$ satisfies (\ref{bp65a}).
\end{remark}
\begin{theorem}\label{bp66}
Let $\widetilde{U}(p,t_{m})$ and $\widehat{U}(p,t_{m})$ be ordered
 upper and lower solutions (\ref{bp19}) of (\ref{bp9}). Suppose that functions $f_{\alpha}(p,t_{m},U)$, $\alpha=1,2$, satisfy (\ref{bp61a})
 and (\ref{bp65a}).
  Then for the sequence of solutions $\{U^{(n)}\}$ generated
 by (\ref{bp27}), (\ref{bp65}), we have the following estimate
 \begin{equation}\label{bp67}
\max_{m\geq1} \max_{\alpha=1,2}\|U_{\alpha}(\cdot, t_{m})-U_{\alpha}^{*}(\cdot, t_{m})\|_{\overline{\Omega}^{h}}\leq T \delta.
  \end{equation}
where $U_{\alpha}(p,t_{m})=U_{\alpha}^{(n_{m})}(p,t_{m})$, $n_{m}$ is a minimal
  number of iterations subject to (\ref{bp65}), and
  $U_{\alpha}^{*}(p,t_{m})$, $\alpha=1,2$, $m\geq1$, are the unique solutions to the  nonlinear difference scheme (\ref{bp9}).
  \end{theorem}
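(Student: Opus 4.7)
The plan is to set up a linear equation for the error $W_{\alpha}(p,t_{m}) = U_{\alpha}(p,t_{m}) - U_{\alpha}^{\ast}(p,t_{m})$, linearize the difference of the reaction terms with the mean-value theorem from Remark \ref{bp14}, and then apply the estimate (\ref{bp13}) of Lemma \ref{bp12} on each time level to obtain a one-step recurrence of the form $W(t_{m}) \le W(t_{m-1}) + \tau\delta$, which on iteration yields the bound $T\delta$.

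First, on each time level $t_{m}$, $m\ge 1$, the stopping criterion (\ref{bp65}) together with the definition (\ref{bp18a}) of $\mathcal{G}_{\alpha}$ (transcribed back to the pointwise form of (\ref{bp9})) gives a mesh function $r_{\alpha}(p,t_{m})$ with $\|r_{\alpha}(\cdot,t_{m})\|_{\Omega^{h}} \le \delta$ such that
\begin{equation*}
\bigl(\mathcal{L}_{\alpha}^{h}+\tau^{-1}\bigr)U_{\alpha}(p,t_{m})+f_{\alpha}(p,t_{m},U)-\tau^{-1}U_{\alpha}(p,t_{m-1})=r_{\alpha}(p,t_{m}),
\end{equation*}
whereas $U^{\ast}$ satisfies the same equation with right-hand side zero. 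Subtracting the two identities and applying the mean-value theorem (\ref{bp16}) to both $f_{\alpha}(\cdot,U)-f_{\alpha}(\cdot,U_{\alpha}^{\ast},U_{\alpha'})$ and $f_{\alpha}(\cdot,U_{\alpha}^{\ast},U_{\alpha'})-f_{\alpha}(\cdot,U^{\ast})$, I obtain a linear difference equation for $W_{\alpha}$ of exactly the form (\ref{bp11}) with
\begin{equation*}
k_{\alpha}(p,t_{m})=\frac{\partial f_{\alpha}(p,t_{m},H_{\alpha},U_{\alpha'})}{\partial u_{\alpha}}, \qquad \varphi_{\alpha}= -\frac{\partial f_{\alpha}(p,t_{m},U_{\alpha}^{\ast},H_{\alpha'})}{\partial u_{\alpha'}}W_{\alpha'}(p,t_{m})+\tau^{-1}W_{\alpha}(p,t_{m-1})+r_{\alpha},
\end{equation*}
and with zero boundary data because the boundary condition is imposed exactly in the iteration (\ref{bp27}). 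By (\ref{bp65a}) one has $k_{\alpha}\ge q \ge 0$, so $k_{\alpha}$ is a nonnegative bounded mesh function as required in Lemma \ref{bp12}.

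Next, I apply the estimate (\ref{bp13}) of Lemma \ref{bp12}(ii), using $W_{\alpha}=0$ on $\partial\Omega^{h}$ and bounding the numerator with (\ref{bp61a}) and (\ref{bp65a}):
\begin{equation*}
\|W_{\alpha}(\cdot,t_{m})\|_{\overline{\Omega}^{h}}\le \frac{q\,\|W_{\alpha'}(\cdot,t_{m})\|_{\overline{\Omega}^{h}}+\tau^{-1}\|W_{\alpha}(\cdot,t_{m-1})\|_{\overline{\Omega}^{h}}+\delta}{\tau^{-1}+q}.
\end{equation*}
Setting $W(t_{m})=\max_{\alpha=1,2}\|W_{\alpha}(\cdot,t_{m})\|_{\overline{\Omega}^{h}}$ and taking the maximum over $\alpha$ on both sides, the $qW(t_{m})$ term on the right absorbs against $(\tau^{-1}+q)W(t_{m})$ on the left and leaves the clean inequality
\begin{equation*}
W(t_{m})\le W(t_{m-1})+\tau\delta.
\end{equation*}

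Finally, since $U_{\alpha}(p,0)=U_{\alpha}^{\ast}(p,0)=\psi_{\alpha}(p)$, one has $W(t_{0})=0$, so induction on $m$ yields $W(t_{m})\le m\tau\delta\le T\delta$, which is (\ref{bp67}). The only delicate step is the algebraic cancellation in the third displayed inequality: it requires the lower bound $\partial f_{\alpha}/\partial u_{\alpha}\ge q$ from (\ref{bp65a}) to match the upper bound $q$ on $|\partial f_{\alpha}/\partial u_{\alpha'}|$ from (\ref{bp61a}), so that no spurious growth factor appears. Remark \ref{bp65b} guarantees this hypothesis is attainable by a change of variables, and without it the constant in (\ref{bp67}) would blow up exponentially in $m$ rather than remain linear in $T$.
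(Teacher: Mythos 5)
Your proposal is correct and follows essentially the same route as the paper: form the error $W_{\alpha}=U_{\alpha}-U_{\alpha}^{*}$, linearize the reaction difference via the mean-value theorem, apply the estimate (\ref{bp13}) with the lower bound $q$ from (\ref{bp65a}) in the denominator and the bound $q$ from (\ref{bp61a}) on the coupling term in the numerator, absorb the $qW(t_{m})$ term, and iterate the resulting recurrence $W(t_{m})\le W(t_{m-1})+\tau\delta$ from $W(t_{0})=0$. Your closing observation about why the two occurrences of $q$ must match is exactly the cancellation $1-q/(\tau^{-1}+q)>0$ that the paper invokes.
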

\begin{proof}
We consider the case of an upper sequence. On a time level $t_{m}$, $m\geq1$, from  (\ref{bp9})
for $\overline{U}_{\alpha}(p,t_{m})$ and $U^{*}_{\alpha}(p,t_{m})$ , we have
\begin{eqnarray*}
&&\left(\mathcal{L}_{\alpha}(p,t_{m})^{h}+\tau^{-1}\right)\overline{U}_{\alpha}(p,t_{m}) +f_{\alpha}(p,t_{m},\overline{U})-\tau^{-1} \overline{U}_{\alpha}(p,t_{m-1})=\\
&& \mathcal{G}_{\alpha}\left(\overline{U}_{\alpha}(p,t_{m}),\overline{U}_{\alpha}(p,t_{m-1}), \overline{U}_{\alpha^{\prime}}(p,t_{m})\right),\quad  p\in \Omega^{h},\ \ \alpha^{\prime}\neq \alpha, \\
&&\alpha,\alpha^{\prime}=1,2,\quad \overline{U}(p,t_{m})=g(p,t_{m}),\quad  p\in \partial \Omega^{h},\quad m\geq1,\\
&& \overline{U}(p,0)=\psi(p),\quad p\in\overline{\Omega}^{h},\nonumber
\end{eqnarray*}
\begin{eqnarray*}
&&\left(\mathcal{L}_{\alpha}^{h}(p,t_{m})+\tau^{-1}\right)U^{*}_{\alpha}(p,t_{m}) +f_{\alpha}(p,t_{m},U^{*})-\tau^{-1} U_{\alpha}^{*}(p,t_{m-1})=0,\\
&&p\in \Omega^{h},\quad \alpha=1,2,\quad U^{*}(p,t_{m})=g(p,t_{m}),\quad p\in \partial \Omega^{h},\\
&& U^{*}(p,0)=\psi(p),\quad p\in\overline{\Omega}^{h},\quad m\geq1.\nonumber
\end{eqnarray*}
Letting $W_{\alpha}(p,t_{m})=U_{\alpha}(p,t_{m})-U_{\alpha}^{*}(p,t_{m})$, $p\in \overline{\Omega}^{h}$, $\alpha=1,2$, $m\geq1$, from here and using the mean-value theorem, we obtain
\begin{eqnarray*}
&&\left(\mathcal{L}_{\alpha}^{h}(p,t_{m})+\left(\tau^{-1}+\frac{\partial f_{\alpha}(p,t_{m},K)}{\partial u_{\alpha}}\right)I\right)W_{\alpha}(p,t_{m})=
 \\
&&-\frac{\partial f_{\alpha}(p,t_{m},K)}{\partial u_{\alpha^{\prime}}}W_{\alpha^{\prime}}(p,t_{m}) +\mathcal{G}_{\alpha}\left(\overline{U}_{\alpha}(p,t_{m}),\overline{U}_{\alpha}(p,t_{m-1}), \overline{U}_{\alpha^{\prime}}(p,t_{m})\right) \\
&& +\tau^{-1} W_{\alpha}(p,t_{m-1}),\quad p\in \Omega^{h},\quad  W_{\alpha}(p,t_{m})=0,\quad p\in \partial \Omega^{h},\\
&&W_{\alpha}(p,0)=0,\quad p\in\overline{\Omega}^{h},\quad  \alpha^{\prime}\neq \alpha,\quad  \alpha,  \alpha^{\prime}=1,2,\quad  m\geq1,
\end{eqnarray*}
where
\begin{equation*}
U_{\alpha}^{*}(p,t_{m})\leq   K_{\alpha}(p,t_{m})\leq  \overline{U}_{\alpha}(p,t_{m}),\quad
  \alpha=1,2,\quad m\geq1.
\end{equation*}
The partial derivatives satisfy (\ref{bp61a}) and (\ref{bp65a}). From here, (\ref{bp61a}) and  (\ref{bp65a}), by using (\ref{bp13}), we obtain that
\begin{eqnarray*}
&&W_{\alpha}(t_{m}) \leq\frac{1}{\tau^{-1}+q} \left( q  W_{\alpha^{\prime}}(t_{m})+ \delta +\tau^{-1} W_{\alpha}(t_{m-1})   
\right),\\
 && W_{\alpha}(t_{m})=\max \|W_{\alpha}(\cdot,t_{m})\|_{\overline{\Omega}^{h}},\quad \alpha\neq \alpha^{\prime},\quad \alpha, \alpha^{\prime}=1,2.
\end{eqnarray*}
where the notation of the norm from (\ref{bp13}) is in use. From here, in the notation  $W_{m}=\max_{\alpha=1,2}W_{\alpha}(t_{m})$, we have
\begin{equation*}
W_{m} \leq\frac{1}{\tau^{-1}+q} \left( q  W_{m}+ \delta +\tau^{-1} W_{m-1}   
\right).
\end{equation*}
Taking into account that
\begin{equation*}
1-  \frac{q}{\tau^{-1}+q}>0,
\end{equation*}
it follows that
\begin{equation*}
W_{m} \leq \tau \delta + W_{m-1}.   
\end{equation*}
From here, taking into account that $W_{0}=0$, by induction on $m$,  we obtain that
\begin{equation*}
W_{m} \leq \delta \sum_{\rho=1}^{m} \tau .
\end{equation*}
Since $ \sum_{\rho=1}^{m} \tau\leq T $, we prove the theorem.
\end{proof}
\begin{theorem}\label{bp68}
  Let the assumptions in Theorem \ref{bp66} be satisfied.  Then for the sequence of solutions $\{U^{(n)}\}$ generated
 by (\ref{bp27}), (\ref{bp65}),  the following estimate holds
\begin{eqnarray}\label{bp69}
&& \max_{m\geq1}  \max_{\alpha=1,2}\|U_{\alpha}(\cdot, t_{m})-u_{\alpha}^{*}(\cdot, t_{m})\|_{\overline{\Omega}^{h}}\leq T\left(\delta +\max_{m\geq1} E_{m}\right), \\
&& E_{m}=\max_{\alpha=1,2}\|E_{\alpha}(\cdot,t_{m})\|_{\overline{\Omega}^{h}},\quad m\geq1, \nonumber
\end{eqnarray}
where the notation of the norm from (\ref{bp13}) is in use, $U_{\alpha}(p,t_{m})=U_{\alpha}^{(n_{m})}(p,t_{m})$,
$\alpha=1,2$, $m\geq1$, $n_{m}$ is the minimal number of iterations subject to the stopping test (\ref{bp65}),    $u_{\alpha}^{*}(x,y,t)$, $\alpha=1,2$, are the exact solutions to (\ref{bp1}), and   $E_{\alpha}(p,t_{m})$,
$\alpha=1,2$, $m\geq1$,
are the truncation errors of the exact solutions  $u_{\alpha}^{*}(x,y,t)$, $\alpha=1,2$,
 on the nonlinear
difference scheme (\ref{bp9}).
\end{theorem}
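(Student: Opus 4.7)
The plan is to follow the same template as the proof of Theorem \ref{bp66}, introducing the truncation error as an additional source term. Let $W_\alpha(p,t_m) = U_\alpha(p,t_m) - u_\alpha^*(p,t_m)$. Two equations are needed: first, the iterative solution $U_\alpha$ satisfies the nonlinear difference scheme (\ref{bp9}) with residual $\mathcal{G}_\alpha(U_\alpha(\cdot,t_m), U_\alpha(\cdot,t_{m-1}), U_{\alpha'}(\cdot,t_m))$, which is controlled in norm by $\delta$ thanks to the stopping test (\ref{bp65}); second, substituting $u_\alpha^*$ into the difference scheme (\ref{bp9}) produces, by the definition of the truncation error, the residual $E_\alpha(p,t_m)$.

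Subtracting the two equations and applying the mean-value theorem (\ref{bp16}) to linearise $f_\alpha(p,t_m,U)-f_\alpha(p,t_m,u^*)$ gives a linear problem of the form
\begin{equation*}
\left(\mathcal{L}_\alpha^h(p,t_m)+\left(\tau^{-1}+\tfrac{\partial f_\alpha(p,t_m,K)}{\partial u_\alpha}\right)I\right)W_\alpha(p,t_m) = R_\alpha(p,t_m),
\end{equation*}
with $W_\alpha=0$ on $\partial\Omega^h$, $W_\alpha(\cdot,0)=0$, and $R_\alpha$ containing $-\tfrac{\partial f_\alpha}{\partial u_{\alpha'}}W_{\alpha'}(p,t_m)$, the iteration residual (bounded by $\delta$), the truncation error $E_\alpha(p,t_m)$, and $\tau^{-1}W_\alpha(p,t_{m-1})$. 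Here $K_\alpha$ lies between $u_\alpha^*$ and $U_\alpha$, so the partial derivatives satisfy (\ref{bp61a}) and (\ref{bp65a}).

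I then invoke the a priori estimate (\ref{bp13}) from Lemma \ref{bp12} on this linear problem. With zero boundary data, the bound reads
\begin{equation*}
W_\alpha(t_m) \leq \frac{1}{\tau^{-1}+q}\bigl( q\, W_{\alpha'}(t_m) + \delta + E_m + \tau^{-1} W_\alpha(t_{m-1})\bigr),
\end{equation*}
where $W_\alpha(t_m)=\|W_\alpha(\cdot,t_m)\|_{\overline{\Omega}^h}$. Taking $W_m = \max_{\alpha=1,2} W_\alpha(t_m)$ and using $1-q/(\tau^{-1}+q) = \tau^{-1}/(\tau^{-1}+q) > 0$ to move the $q W_m$ term to the left, I arrive at the one-step recursion $W_m \leq \tau(\delta + E_m) + W_{m-1}$.

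Finally, since $W_0 = 0$, induction on $m$ yields $W_m \leq \sum_{\rho=1}^m \tau(\delta + E_\rho) \leq T(\delta + \max_{m\geq 1} E_m)$, which is (\ref{bp69}). The whole argument is essentially a structural copy of the proof of Theorem \ref{bp66}; the only novelty is bookkeeping the truncation error $E_\alpha$ alongside the iteration residual $\delta$ on the right-hand side, so I expect no real obstacle beyond making sure the linearisation argument uses (\ref{bp65a}) (which guarantees $\partial f_\alpha/\partial u_\alpha \geq q$) so that the coefficient $q/(\tau^{-1}+q) < 1$ and the recursion inverts correctly.
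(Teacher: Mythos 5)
Your proposal is correct, and it reaches the bound $T(\delta+\max_{m\geq1}E_{m})$ by a slightly different route than the paper. The paper introduces the exact solution $U^{*}_{\alpha}$ of the discrete scheme (\ref{bp9}) as an intermediate object, writes $V=u^{*}-U^{*}$, derives the recursion $V_{m}\leq V_{m-1}+\tau E_{m}$ for the pure discretization error, concludes $V_{m}\leq T\max_{\rho\geq1}E_{\rho}$, and then combines this with the bound $\|U^{(n_{m})}_{\alpha}-U^{*}_{\alpha}\|\leq T\delta$ already established in Theorem \ref{bp66} via the triangle inequality $\|U^{(n_{m})}_{\alpha}-u^{*}_{\alpha}\|\leq\|U^{(n_{m})}_{\alpha}-U^{*}_{\alpha}\|+\|U^{*}_{\alpha}-u^{*}_{\alpha}\|$. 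You instead run a single recursion directly on $W_{\alpha}=U_{\alpha}-u^{*}_{\alpha}$, carrying both the iteration residual (bounded by $\delta$ through the stopping test (\ref{bp65})) and the truncation error $E_{\alpha}$ as source terms, which gives $W_{m}\leq\tau(\delta+E_{m})+W_{m-1}$ and hence the same estimate. The two arguments are built from identical ingredients (mean-value linearisation, the a priori bound (\ref{bp13}), the inequality $1-q/(\tau^{-1}+q)>0$ from (\ref{bp65a}), and induction on $m$); yours is marginally more self-contained since it does not route through $U^{*}$ and Theorem \ref{bp66}, while the paper's is modular and reuses the earlier result. One small caveat that applies equally to both versions: the mean-value point $K_{\alpha}$ lies between $U_{\alpha}$ and $u^{*}_{\alpha}$, and while (\ref{bp65a}) is assumed for all $u$, the off-diagonal bound (\ref{bp61a}) is only stated on the sector $\langle\widehat{U},\widetilde{U}\rangle$; the paper silently makes the same use of it, so this is not a defect of your argument relative to the paper's.
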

\begin{proof}
We denote $V(p,t_{m})=u^{*}(p,t_{m})- U^{*}(p,t_{m})
$, where the mesh vector function  $U^{*}(p,t_{m})$ is the unique solution of the nonlinear difference scheme (\ref{bp9}).
From (\ref{bp9}), by using the mean-value theorem, we obtain that
\begin{eqnarray*}
&&\left(\mathcal{L}_{\alpha}^{h}(p,t_{m})+\left(\tau^{-1}+\frac{\partial f_{\alpha}(p,t_{m},Y)}{\partial u_{\alpha}}\right)I\right)V_{\alpha}(p,t_{m})
-\tau^{-1} V_{\alpha}(p,t_{m-1}) \\
&&+\frac{\partial f_{\alpha}(p,t_{m},Y)}{\partial u_{\alpha^{\prime}}}V_{\alpha^{\prime}}(p,t_{m})=E_{\alpha}(p,t_{m}),\quad p\in \Omega^{h},\quad \alpha^{\prime}\neq \alpha,  \nonumber\\
&&\alpha, \alpha^{\prime}=1,2,\quad  V(p,t_{m})=0,\quad p\in \partial \Omega^{h},\quad V(p,0)=0,\quad p\in\overline{\Omega}^{h},\\
&& m\geq1,\nonumber
\end{eqnarray*}
where $Y_{\alpha}(p,t_{m})$, $\alpha=1,2$ lie between $u^{*}_{\alpha}(p,t_{m}) $ and $U^{*}_{\alpha}(p,t_{m})$, $\alpha=1,2$. From here, (\ref{bp61a}), (\ref{bp65a}), by using (\ref{bp13}), it follows that
\begin{eqnarray*}
&&\|V_{\alpha}(\cdot, t_{m})\|_{\overline{\Omega}^{h}}\leq \\
&&\frac{1}{\tau^{-1}+q}\left(q \|V_{\alpha^{\prime}}(\cdot,t_{m})\|_{\Omega^{h}}
+ \tau^{-1}\|V_{\alpha}(\cdot,t_{m-1})\|_{\Omega^{h}}+\|E_{\alpha}(\cdot,t_{m})\|_{\Omega^{h}} \right).
\end{eqnarray*}
Letting $V_{m}=\max_{\alpha=1,2}\|V_{\alpha}(\cdot, t_{m})\|_{\overline{\Omega}^{h}}$, $m\geq1$, we have
\begin{equation*}
V_{m}\leq
\frac{1}{\tau^{-1}+q}\left(q V_{m}
+ \tau^{-1}V_{m-1}+E_{m} \right).
\end{equation*}
Thus, taking into account that
\begin{equation*}
  1-\frac{q}{\tau^{-1}+q}>0,
\end{equation*}
we conclude
\begin{equation}\label{bp70}
  V_{m}\leq V_{m-1}+ \tau E_{m} .
\end{equation}
Since $V_{0}=0$, for $m=1$ in (\ref{bp70}), we have
\begin{equation*}
  V_{1}\leq  \tau E_{1} .
\end{equation*}
For $m=2$ in (\ref{bp70}), we obtain
\begin{equation*}
  V_{2}\leq  \tau ( E_{1} + E_{2}).
\end{equation*}
By induction on $m$, we can prove that
\begin{equation*}
  V_{m}\leq  \tau \sum_{\rho=1}^{m}E_{\rho}=\left(\sum_{\rho=1}^{m}\tau\right) \max_{\rho\geq1}E_{\rho}.
\end{equation*}
Since $\sum_{\rho=1}^{m}\tau\leq T$, where $T$ is the final time, we have
\begin{equation}\label{bp71}
  V_{m}\leq  T \max_{\rho\geq1}E_{\rho}.
\end{equation}
We estimate the left hand side in (\ref{bp69}) as follows
\begin{eqnarray*}
 \|U_{\alpha}^{( n_{m})}(\cdot,t_{m})\pm U_{\alpha}^{*}(\cdot, t_{m}) -u_{\alpha}^{*}( \cdot, t_{m})\|_{\overline{\Omega}^{h}} &\leq& \|U_{\alpha}^{(n_{m})}(\cdot, t_{m})-U_{\alpha}^{*}(\cdot, t_{m})\|_{\overline{\Omega}^{h}} \\
&&+\|U_{\alpha}^{*}(\cdot, t_{m})-u_{\alpha}^{*}(\cdot, t_{m})\|_{\overline{\Omega}^{h}},
\end{eqnarray*}
where $U_{\alpha}^{*}(p,t_{m})$, $\alpha=1,2$, are the exact solutions of (\ref{bp9}).
From here, (\ref{bp67}) and  (\ref{bp71}), we prove (\ref{bp69}).
\end{proof}
\begin{remark}\label{bp72}
  The truncation errors $E_{\alpha}(p,t_{m})$, $\alpha=1,2$, $m\geq1$, for the  nonlinear difference scheme (\ref{bp9})  are given in the form
  \begin{equation*}
\max_{m\geq1} E_{m}= O(\tau+h^{\kappa}),
\end{equation*}
where $E_{m}$ is defined in (\ref{bp69}),  $\tau$ and $h$ are, respectively, the time and space steps, $\kappa=1$ in the case
 of one-sided difference approximations of $u_{\alpha,x}$, $u_{\alpha,y}$, $\alpha=1,2$, and
$\kappa=2$ in the case of central difference approximations of these derivatives.
  \end{remark}
\subsection{Construction of upper and lower solutions}
To start  the monotone iterative methods (\ref{bp27}), on each time level $t_{m}$, $m\geq1$,  initial iterations are needed.
In this section, we discuss the construction of initial iterations   $\widetilde{U}_{\alpha}(p, t_{m})$ and $\widehat{U}_{\alpha}(p, t_{m})$, $\alpha=1,2$.
\subsubsection{Bounded $f_{u}$}
Assume that  the functions $f_{\alpha}$, $g_{\alpha}$ and $\psi_{\alpha}$, $\alpha=1,2$, in (\ref{bp1}) satisfy the conditions
\begin{align}\label{bp73}
&f_{\alpha}(x,y,t,\mathbf{0})\leq 0,\quad  f_{\alpha}(x,y,t,u)\geq -M_{\alpha},\quad u_{\alpha}(x,y,t)\geq0,    \\
&(x,y,t)\in \overline{Q}_{T},\quad g_{\alpha}(x,y,t)\geq0,\quad (x,y,t) \partial Q_{T},\quad \psi_{\alpha}(x,y)\geq0,\nonumber \\
&(x,y)\in \overline{\omega},\quad\alpha=1,2, \nonumber
\end{align}
where $ M_{\alpha}$, $\alpha=1,2$, are positive  constants. 
 From (\ref{bp19b}) and (\ref{bp73}), we obtain that the functions
 \begin{equation}\label{bp73a}
 \widehat{U}_{\alpha}(p,t_{m})=\left\{ \begin{array}{ll}
\psi_{\alpha}(p), \quad m=0,\\
0,\quad\quad\quad m\geq1,
\end{array}\right.
 p\in\overline{\Omega}^{h},\quad \alpha=1,2,
 \end{equation}
 are lower solutions of (\ref{bp9}).

 We introduce the linear problems
\begin{align} \label{bp74}
&\left(\mathcal{L}_{\alpha}^{h}(p,t_{m})+\tau^{-1}\right)\widetilde{U}_{\alpha}(p,t_{m})  =\tau^{-1}\widetilde{U}_{\alpha}(p,t_{m-1})+M_{\alpha},\quad p\in \Omega^{h}, \nonumber\\
& \widetilde{U}_{\alpha}(p,t_{m})=g_{\alpha}(p,t_{m}),\quad p\in \partial \Omega^{h},\quad \widetilde{U}_{\alpha}(p,0)= \psi_{\alpha}(p),\quad p\in \overline{\Omega}^{h}, \\
& \alpha=1,2,\quad m\geq1. \nonumber
\end{align}
\begin{theorem}\label{bp75}
  Let assumptions in (\ref{bp73}) be satisfied. Then $\widehat{U}$ and $\widetilde{U}$ from, respectively, (\ref{bp73a}) and (\ref{bp74})
  , are ordered lower and upper
  solutions to (\ref{bp9}), such that
  \begin{equation}\label{bp76}
    0\leq \widehat{U}_{\alpha}(p,t_{m})\leq \widetilde{U}_{\alpha}(p,t_{m}),\quad p\in \overline{\Omega}^{h},\quad \alpha=1,2,\quad m\geq1.
  \end{equation}
\end{theorem}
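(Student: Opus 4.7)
The proof will verify the three items defining ordered upper/lower solutions in (\ref{bp19}): the lower-solution inequality for $\widehat{U}$, the upper-solution inequality for $\widetilde{U}$, and the pointwise ordering (\ref{bp76}).

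The plan is to start with the easy direction: verifying that the constant-in-time mesh function $\widehat{U}$ from (\ref{bp73a}) is a lower solution. Because $\widehat{U}_\alpha(p,t_m)=0$ for $m\geq1$, the operator $\mathcal{L}_\alpha^h$ annihilates it and the $\tau^{-1}$ term vanishes, so the required inequality reduces to $f_\alpha(p,t_m,\mathbf{0})-\tau^{-1}\widehat{U}_\alpha(p,t_{m-1})\leq 0$. For $m=1$ this reads $f_\alpha(p,t_1,\mathbf{0})-\tau^{-1}\psi_\alpha(p)\leq 0$, which follows from the two inequalities in (\ref{bp73}) that $f_\alpha(\cdot,\mathbf{0})\leq 0$ and $\psi_\alpha\geq 0$. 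For $m\geq 2$ the $\tau^{-1}\widehat{U}_\alpha(p,t_{m-1})$ term also vanishes, leaving only $f_\alpha(p,t_m,\mathbf{0})\leq 0$. The boundary condition $\widehat{U}\leq g$ and the initial condition $\widehat{U}(p,0)=\psi(p)$ are immediate from (\ref{bp73}).

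Next I would handle $\widetilde{U}$, which requires two sub-steps. First I would show by induction on $m$ that $\widetilde{U}_\alpha(p,t_m)\geq 0$ on $\overline{\Omega}^h$. The base case $\widetilde{U}_\alpha(p,0)=\psi_\alpha(p)\geq 0$ comes from (\ref{bp73}). Assuming $\widetilde{U}_\alpha(\cdot,t_{m-1})\geq 0$, the right-hand side $\tau^{-1}\widetilde{U}_\alpha(p,t_{m-1})+M_\alpha$ of (\ref{bp74}) is nonnegative (using $M_\alpha>0$), and the boundary data $g_\alpha$ is nonnegative, so the maximum principle part (i) of Lemma \ref{bp12} applied to (\ref{bp74}) (with $k_\alpha\equiv 0$) yields $\widetilde{U}_\alpha(\cdot,t_m)\geq 0$. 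Second, substituting $(\mathcal{L}_\alpha^h+\tau^{-1})\widetilde{U}_\alpha$ from (\ref{bp74}) into the upper-solution inequality (\ref{bp19b}) produces
\begin{equation*}
\bigl(\mathcal{L}_\alpha^h+\tau^{-1}\bigr)\widetilde{U}_\alpha+f_\alpha(p,t_m,\widetilde{U})-\tau^{-1}\widetilde{U}_\alpha(p,t_{m-1}) = M_\alpha+f_\alpha(p,t_m,\widetilde{U}),
\end{equation*}
and because $\widetilde{U}\geq 0$ the second bound in (\ref{bp73}) gives $f_\alpha(p,t_m,\widetilde{U})\geq -M_\alpha$, so the right-hand side is nonnegative. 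The boundary and initial values match by construction.

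Finally, the ordering (\ref{bp76}) is essentially free: at $m=0$ both functions equal $\psi_\alpha$, while for $m\geq 1$ we have $\widehat{U}_\alpha=0$ and the step above already gave $\widetilde{U}_\alpha\geq 0$, so $\widehat{U}_\alpha\leq\widetilde{U}_\alpha$ pointwise on $\overline{\Omega}^h$. The only place where anything non-trivial happens is the induction that yields $\widetilde{U}\geq 0$, since it is the hinge that lets us convert the two-sided assumption (\ref{bp73}) on $f_\alpha$ into the required sign of the residual; I expect this to be the main (and in fact only) technical point, and it is dispatched in one line by invoking Lemma \ref{bp12}(i).
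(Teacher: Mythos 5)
Your proposal is correct and follows essentially the same route as the paper: the nonnegativity of $\widetilde{U}_{\alpha}$ is obtained by induction on $m$ via the maximum principle of Lemma \ref{bp12}, the upper-solution residual is computed from (\ref{bp74}) as $M_{\alpha}+f_{\alpha}(p,t_{m},\widetilde{U})\geq 0$ using (\ref{bp73}), and the ordering (\ref{bp76}) and the lower-solution property of $\widehat{U}$ follow directly. Your write-up is only more explicit than the paper's (which verifies the lower-solution claim just before stating the theorem rather than inside the proof), but the content is the same.
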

\begin{proof}
  From (\ref{bp73}) and (\ref{bp74}), by the maximum principle  (\ref{bp13}), we conclude  (\ref{bp76}) for $m=1$.
  By induction on $m$, we can prove (\ref{bp76}) for $m\geq1$.

We now show that $\widetilde{U}_{\alpha}(p,t_{m})$, $\alpha=1,2$, are   upper solutions
    (\ref{bp19}) to (\ref{bp9}). We present the left hand side of (\ref{bp9}) in the form
     \begin{align}\label{bp76a}
  &  \mathcal{G}_{\alpha}\left(\widetilde{U}_{\alpha}(p,t_{m}), \widetilde{U}_{\alpha}(p,t_{m-1}), \widetilde{U}_{\alpha^{\prime}}(p,t_{m})\right)=\\
  &\left(\mathcal{L}_{\alpha}^{h}(p,t_{m})+\tau^{-1}\right)\widetilde{U}_{\alpha}(p,t_{m})+f_{\alpha}(p,t_{m},\widetilde{U})
  -\tau^{-1}\widetilde{U}_{\alpha}(p,t_{m-1}), \nonumber\\
 & p\in \Omega^{h},\quad \alpha^{\prime}\neq \alpha,\quad \alpha ,\alpha^{\prime}=1,2, \quad m\geq1. \nonumber
  \end{align}
Using  (\ref{bp74}), for $m\geq1$, we obtain that
   \begin{eqnarray*}
   &&   \mathcal{G}_{\alpha}\left(\widetilde{U}_{\alpha}(p,t_{m}), \widetilde{U}_{\alpha}(p,t_{m-1}), \widetilde{U}_{\alpha^{\prime}}(p,t_{m})\right)= M_{\alpha}+f_{\alpha}(p,t_{m},\widetilde{U}),\quad p\in \Omega^{h},\\
   &&  \alpha^{\prime}\neq \alpha,\quad \alpha ,\alpha^{\prime}=1,2, .
   \end{eqnarray*}
   From here and (\ref{bp73}), we conclude that
    \begin{eqnarray*}
   &&   \mathcal{G}_{\alpha}\left(\widetilde{U}_{\alpha}(p,t_{m}), \widetilde{U}_{\alpha}(p,t_{m-1}), \widetilde{U}_{\alpha^{\prime}}(p,t_{m})\right)\geq 0,\quad p\in \Omega^{h},\quad  \alpha^{\prime}\neq \alpha,\\
   && \alpha ,\alpha^{\prime}=1,2, \quad m\geq1.
   \end{eqnarray*}
 Since $\widetilde{U}_{\alpha}(p,t_{m})$, $\alpha=1,2$, satisfy the boundary and initial conditions, we prove that $\widetilde{U}_{\alpha}(p,t_{m})$, $\alpha=1,2$, are upper solutions to (\ref{bp9}). From here and (\ref{bp76}), we conclude that $\widehat{U}$ and $\widetilde{U}$ from, respectively, (\ref{bp73a}) and (\ref{bp74}), are ordered lower and upper solutions to (\ref{bp9}).
 \end{proof}
\subsubsection{Constant upper and lower solutions}
Let  the functions $f_{\alpha}$, $g_{\alpha}$ and $\psi_{\alpha}$, $\alpha=1,2$, in (\ref{bp1}) satisfy the conditions
\begin{align}\label{bp77}
&f_{\alpha}(x,y,t,\mathbf{0})\leq 0,\ \ f_{\alpha}(x,y,t,K)\geq0,\ \  u_{\alpha}(x,y,t)\geq0,\ \ (x,y,t)\in \overline{Q}_{T}, \nonumber\\
& 0\leq g_{\alpha}(x,y,t)\leq K_{\alpha},\ \ (x,y,t)\in \partial Q_{T},\ \ 0\leq\psi_{\alpha}(x,y)\leq K_{\alpha}, \\
&(x,y)\in\overline{\omega},\ \ \alpha=1,2, \nonumber
\end{align}
where $K_{1}$, $K_{2}$ are positive
constants, and
 $K=(K_{1},K_{2})$.  The mesh functions $\widehat{U}_{\alpha}(p,t_{m})$, $\alpha=1,2$, from (\ref{bp73a})
 are lower solutions to (\ref{bp9}).

In the following lemma, we prove that the mesh functions
\begin{equation}\label{bp79}
\widetilde{U}_{\alpha}(p,t_{m})= \left\{ \begin{array}{ll}
\psi_{\alpha}(p),\quad   m=0,\\
K_{\alpha},\quad  m\geq1,
\end{array}\right.
p\in \overline{\Omega}^{h},\quad \alpha=1,2,
\end{equation}
are  upper solutions to (\ref{bp9}).
\begin{theorem}\label{bp80}
Suppose that the assumptions in  (\ref{bp77}) are satisfied. Then the mesh functions $\widehat{U}_{\alpha}(p,t_{m})$ and $\widetilde{U}_{\alpha}(p,t_{m})$  from,
respectively, (\ref{bp73a}) and (\ref{bp79}), are ordered lower and upper solutions to (\ref{bp9}) and satisfy (\ref{bp76}).
\end{theorem}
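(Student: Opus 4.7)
The plan is to verify directly that $\widehat{U}$ and $\widetilde{U}$ from (\ref{bp73a}) and (\ref{bp79}) satisfy all four conditions in the upper/lower solution definition (\ref{bp19}): the ordering (\ref{bp19a}), the differential inequality (\ref{bp19b}), the boundary condition (\ref{bp19c}), and the initial condition. Since these are explicit (piecewise constant in time) mesh functions, the whole argument reduces to substituting into the residual $\mathcal{G}_{\alpha}$ and invoking the sign hypotheses in (\ref{bp77}). There is no real obstacle, only bookkeeping; the one point requiring care is the transition at $m=1$, where the previous time-level value is $\psi_\alpha$ rather than $0$ or $K_\alpha$.

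First I would establish (\ref{bp76}). For $m=0$ both functions equal $\psi_\alpha$, and $0\leq\psi_\alpha\leq K_\alpha$ holds by assumption; for $m\geq 1$ we have $\widehat{U}_\alpha=0\leq K_\alpha=\widetilde{U}_\alpha$ trivially. The boundary inequality $\widehat{U}\leq g\leq \widetilde{U}$ on $\partial\Omega^h$ and the initial inequality at $m=0$ follow immediately from (\ref{bp77}).

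Next I would check the differential inequality for $\widehat{U}_\alpha\equiv 0$ on $m\geq 1$. Since the spatial differences and the $\tau^{-1}$ term applied to the zero function vanish, the residual reduces to $f_\alpha(p,t_m,\mathbf{0})-\tau^{-1}\widehat{U}_\alpha(p,t_{m-1})$. For $m=1$ this equals $f_\alpha(p,t_1,\mathbf{0})-\tau^{-1}\psi_\alpha(p)$, and both summands are $\leq 0$ by (\ref{bp77}); for $m\geq 2$ the previous value is $0$, leaving $f_\alpha(p,t_m,\mathbf{0})\leq 0$. Hence $\widehat{U}$ satisfies the lower-solution inequality in (\ref{bp19b}).

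Finally, for $\widetilde{U}_\alpha\equiv K_\alpha$ on $m\geq 1$, I would exploit that $\mathcal{L}^h_\alpha$ annihilates spatial constants (each difference operator in (\ref{bp17}) kills constants), so $\mathcal{L}^h_\alpha K_\alpha=0$ and the residual becomes
\begin{equation*}
\tau^{-1}K_\alpha+f_\alpha(p,t_m,K)-\tau^{-1}\widetilde{U}_\alpha(p,t_{m-1}).
\end{equation*}
For $m\geq 2$ the time-difference part cancels and only $f_\alpha(p,t_m,K)\geq 0$ remains; for $m=1$ the time-difference is $\tau^{-1}(K_\alpha-\psi_\alpha(p))\geq 0$ because $\psi_\alpha\leq K_\alpha$, and $f_\alpha(p,t_1,K)\geq 0$ by (\ref{bp77}), so the sum is nonnegative. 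Combined with the ordering and boundary/initial checks, this gives that $\widehat{U}$ and $\widetilde{U}$ are ordered lower and upper solutions satisfying (\ref{bp76}), completing the proof.
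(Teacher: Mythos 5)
Your proof is correct and follows essentially the same route as the paper: verify the ordering and boundary/initial inequalities directly, then substitute the constant $K_\alpha$ into the residual, use that the difference operators annihilate constants, and split into the cases $m=1$ (where the time-difference term $\tau^{-1}(K_\alpha-\psi_\alpha)\geq 0$ appears) and $m\geq 2$ (where it cancels), invoking the sign conditions in (\ref{bp77}). The only cosmetic difference is that you re-derive the lower-solution inequality for $\widehat{U}\equiv 0$ explicitly, whereas the paper simply cites the earlier subsection for that part.
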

\begin{proof}
It is clear  from (\ref{bp73a}) and (\ref{bp79}), that $0\leq\widehat{U}_{\alpha}(p,t_{m})\leq \widetilde{U}_{\alpha}(p,t_{m})$,
$p\in\overline{\Omega}^{h}$, $\alpha=1,2$, $m\geq1$. We now show that $\widetilde{U}_{\alpha}(p,t_{m})$, $\alpha=1,2$, are upper solutions (\ref{bp19}) to (\ref{bp9}).

Using (\ref{bp79}), we write  the left hand side of (\ref{bp9}) for $m=1$  in the form
     \begin{align*}
  &  \mathcal{G}_{\alpha}\left(\widetilde{U}_{\alpha}(p,t_{1}), \psi_{\alpha}(p), \widetilde{U}_{\alpha^{\prime}}(p,t_{1})\right)=\mathcal{L}_{\alpha}^{h}(p,t_{1})K_{\alpha}+f_{\alpha}(p,t_{1},K) \\
 &+\tau^{-1}(K_{\alpha}-\psi_{\alpha}(p)),\quad  p\in \Omega^{h},\quad  \alpha^{\prime}\neq \alpha,\quad \alpha ,\alpha^{\prime}=1,2.
  \end{align*}
From here and  (\ref{bp77}) , we conclude that
\begin{equation*}
 \mathcal{G}_{\alpha}\left(\widetilde{U}_{\alpha}(p,t_{1}), \psi_{\alpha}(p), \widetilde{U}_{\alpha^{\prime}}(p,t_{1})\right)\geq0,\quad p\in \Omega^{h},\quad  \alpha^{\prime}\neq \alpha,\quad \alpha ,\alpha^{\prime}=1,2.
\end{equation*}
For $m\geq2$, from (\ref{bp77}) and (\ref{bp79}),  we have
\begin{eqnarray*}
&& \mathcal{G}_{\alpha}\left(\widetilde{U}_{\alpha}(p,t_{m}), \widetilde{U}_{\alpha}(p,t_{m-1}), \widetilde{U}_{\alpha^{\prime}}(p,t_{m})\right)\geq f(p,t_{m},K_{\alpha})\geq0,\quad p\in \Omega^{h},\\ &&  \alpha^{\prime}\neq \alpha,\quad \alpha ,\alpha^{\prime}=1,2, .
\end{eqnarray*}
Since $\widetilde{U}_{\alpha}(p,t_{0})$, $\alpha=1,2$, satisfy the  initial conditions
 and $\widetilde{U}_{\alpha}(p,t_{m})\geq g_{\alpha}(p,t_{m})$, $p\in\partial \Omega^{h}$, $\alpha=1,2$, at $m\geq1$, we prove that $\widetilde{U}_{\alpha}(p,t_{m})$, $\alpha=1,2$, are
upper solutions to (\ref{bp9}).  From here and (\ref{bp76}),
we conclude that $\widehat{U}$ and $\widetilde{U}$ from, respectively, (\ref{bp73a}) and (\ref{bp79}),
are ordered lower and upper solutions to (\ref{bp9}).
\end{proof}
\subsection{Applications}
\subsubsection{Gas-liquid interaction model}
 Consider the gas-liquid interaction model \cite{P92}, where a dissolved gas A and a dissolved reactant B
interact in a bounded diffusion medium $\omega$. The chemical reaction scheme is given by $A+k_{1}B\rightarrow k_{2}P$
and is called the second order reaction, where $k_{1}$ and $k_{2}$ are the reaction rates  and P is the product.
Denote by $z_{1}(x,y,t)$ and $z_{2}(x,y,t)$ the concentrations of the dissolved gas $A$ and the reactant $B$, respectively. Then the above reactant
scheme is governed by (\ref{bp1}) with $L_{\alpha}z_{\alpha}=\varepsilon_{\alpha}\triangle z_{\alpha}$, $f_{\alpha}=\sigma_{\alpha}z_{1}z_{2}$, $\alpha=1,2$, where
$\sigma_{1}$ is the reaction rate and
$\sigma_{2}=k_{1}\sigma_{1}$. By choosing a suitable positive constant $\varrho_{1}>0$ and letting $u_{1}=\varrho_{1}-z_{1}\geq0$, $u_{2}=z_{2}$, we have
\begin{equation}\label{bp81}
  f_{1}=-\sigma_{1}(\varrho_{1}-u_{1})u_{2},\quad f_{2}=\sigma_{2}(\varrho_{1}-u_{1})u_{2},
\end{equation}
and system (\ref{bp1}) is reduced to
\begin{eqnarray*}
&&u_{\alpha,t}-\varepsilon_{\alpha}\triangle u_{\alpha}+f_{\alpha}(u_{1},u_{2})=0,\quad (x,y,t)\in Q_{T},\quad \alpha=1,2, \\
&& u_{1}(x,y,t)=g_{1}^{*}(x,y,t)\geq0,\quad u_{2}(x,y,t)=g_{2}(x,y,t)\geq0,\\
&&(x,y,t)\in\partial Q_{T},\quad u_{\alpha}(x,y,0)=\psi_{\alpha}(x,y),\quad (x,y)\in\overline{\omega},\quad \alpha=1,2,
\end{eqnarray*}
where $g^{*}_{1}=\varrho_{1}-g_{1}\geq0$, $g_{2}\geq0$ on $\partial \omega$ and $\psi_{\alpha}\geq0$, $\alpha=1,2$,
in $\overline{\omega}^{h}$. It is clear from (\ref{bp81}) that $f_{\alpha}$, $\alpha=1,2$,
are quasi-monotone nondecreasing in the rectangle
\begin{equation*}
S_{\varrho}=[0,\varrho_{1}]\times [0,\varrho_{2}],
\end{equation*}
for any positive constant $\varrho_{2}$.

The nonlinear difference scheme (\ref{bp9}) is reduced to
\begin{eqnarray}\label{bp82}
 && (\mathcal{L}_{\alpha}^{h}(p,t_{m})+\tau^{-1}) U_{\alpha}(p,t_{m})+f_{\alpha}(U)-\tau^{-1}U_{\alpha}(p,t_{m-1})=0,\quad p \in \Omega^{h},\nonumber \\
&& \alpha=1,2,\quad  U_{1}(p,t_{m})=g^{*}_{1}(p,t_{m}),\quad  U_{2}(p,t_{m})=g_{2}(p,t_{m}),\quad p\in \partial \Omega^{h}, \nonumber \\
&& m\geq1,\quad U_{\alpha}(p,0)=\psi_{\alpha}(p),\quad p\in \overline{\Omega}^{h},
  \end{eqnarray}
where $f_{\alpha}$, $\alpha=1,2$, are defined in (\ref{bp81}). Since the reaction functions $f_{\alpha}$, $\alpha=1,2$,
 satisfy  the assumptions in  (\ref{bp77}),  with  $K_{\alpha}$, $\alpha=1,2$ are given by
 \begin{eqnarray}\label{bp82a}
  && K_{\alpha}=\varrho_{\alpha},\quad   \alpha=1,2,\\
  &&\varrho_{1}\geq\max_{m\geq1} \max_{p\in\partial \Omega^{h}} g^{*}_{1}(p,t_{m}),\quad    \varrho_{2}\geq\max_{m\geq1} \max_{p\in\partial \Omega^{h}} g_{2}(p,t_{m}),\quad m\geq1,\nonumber
 \end{eqnarray}
  it follows that the mesh functions $\widehat{U}_{\alpha}(p,t_{m})$ and $\widetilde{U}_{\alpha}(p,t_{m})$
 from, respectively, (\ref{bp73a}) and (\ref{bp79}) are ordered lower and upper solutions to (\ref{bp82}).

From (\ref{bp81}), in the sector $\langle \widehat{U}(t_{m}), \widetilde{U}(t_{m})\rangle=\langle 0, K_{\alpha}\rangle$, we have
\begin{eqnarray*}
&&\frac{\partial f_{1}}{\partial u_{1}}(U_{1},U_{2})=\sigma_{1}U_{2}(p,t_{m})\leq \sigma_{1}\varrho_{2},\quad p\in \overline{\Omega}^{h},\quad m\geq1,\\
&& \frac{\partial f_{2}}{\partial u_{2}}(U_{1},U_{2})=\sigma_{2}(\varrho_{1}-U_{1}(p,t_{m}))\leq \sigma_{2}\varrho_{1},\quad p\in \overline{\Omega}^{h},\quad m\geq1,\\
&&-\frac{\partial f_{1}}{\partial u_{2}}=\sigma_{1}(\varrho_{1}-U_{1}(p,t_{m}))\geq0,\quad p\in \overline{\Omega}^{h},\quad m\geq1,\\
&&-\frac{\partial f_{2}}{\partial u_{1}}=\sigma_{2}U_{2}(p,t_{m})\geq0,\quad p\in \overline{\Omega}^{h},\quad m\geq1,
\end{eqnarray*}
and the assumptions in (\ref{bp20}) and (\ref{bp21}) are satisfied  with
\begin{equation*}
c_{1}(p,t_{m})=\sigma_{1} \varrho_{2},\quad c_{2}(p,t_{m})= \sigma_{2}\varrho_{1},\quad p\in \overline{\Omega}^{h},\quad m\geq1.
\end{equation*}
From here and (\ref{bp82a}), we conclude that Theorem \ref{bp30}  holds  for the discrete gas-liquid interaction model (\ref{bp82}).
\subsubsection{The Volterra-Lotka competition model}
In the  Volterra-Lotka competition model \cite{P92} with the effect of dispersion between
two competing  species in an ecological systems,  the model is governed by (\ref{bp1}) with reaction
functions are given by
\begin{equation}\label{bp86}
f_{1}=-u_{1}(1-u_{1}+a_{1} u_{2}),\quad f_{2}=-u_{2}(1+a_{2}u_{1}-u_{2}),
\end{equation}
where $u_{1}$ and $u_{2}$ are the populations of two competing species, the parameters   $a_{\alpha}$, $\alpha=1,2$,
are positive constants which  describe the interaction of the two species. We assume that $a_{\alpha}$, $\alpha=1,2$, satisfy the inequality
\begin{equation}\label{bp86a}
  a_{1}<\frac{1}{a_{2}}.
\end{equation}
 System (\ref{bp1}) is reduced to
\begin{eqnarray*}
&&u_{\alpha,t}-\varepsilon_{\alpha}\triangle u_{\alpha}+f_{\alpha}(u_{1},u_{2})=0,\quad (x,y,t)\in Q_{T}, \\
&& u_{1}(x,y,t)=0,\quad u_{2}(x,y,t)=0,\quad (x,y,t)\in\partial Q_{T},\\
&&\quad u_{\alpha}(x,y,0)=\psi_{\alpha}(x,y),\quad (x,y)\in\overline{\omega},\quad \alpha=1,2.
\end{eqnarray*}
The nonlinear difference scheme (\ref{bp9}) is reduced to
\begin{eqnarray}\label{bp87}
 && (\mathcal{L}_{\alpha}^{h}(p,t_{m})+\tau^{-1}) U_{\alpha}(p,t_{m})+f_{\alpha}(U)-\tau^{-1}U_{\alpha}(p,t_{m-1})=0,\quad p \in \Omega^{h},\nonumber \\
&&  U_{1}(p,t_{m})=0,\quad U_{2}(p,t_{m})=0,\quad p\in \partial \Omega^{h},\quad m\geq1,  \nonumber \\
&& \quad U_{\alpha}(p,0)=\psi_{\alpha}(p),\quad p\in \overline{\Omega}^{h},\quad \alpha=1,2,
  \end{eqnarray}
where $f_{\alpha}$, $\alpha=1,2$, are defined in (\ref{bp86}).  We take $(M_{1},M_{2})$ and $(0,0)$ as
 ordered upper and lower solutions (\ref{bp19}) to (\ref{bp87}),
where $M_{\alpha}$, $\alpha=1,2$, are positive constants and chosen in the following forms
\begin{eqnarray}\label{bp87a}
&& M_{1}=a_{1}M_{2}+1,\\
&& M_{2}\geq \max\left\{\frac{a_{2}+1}{1-a_{1}a_{2}},   \max_{p\in\overline{\Omega}^{h}}\psi_{2}(p),  \frac{1}{a_{1}}\left( \max_{p\in\overline{\Omega}^{h}}\psi_{1}(p) -1\right)  \right\}. \nonumber
\end{eqnarray}
It is clear that $(M_{1}, M_{2})$ and $(0,0)$ satisfy (\ref{bp19a}) and (\ref{bp19c}). Now we prove (\ref{bp19b}). From  (\ref{bp76a}), it follows that
$M_{\alpha}$, $\alpha=1,2$, must satisfy the inequalities
\begin{eqnarray*}
&& \mathcal{G}_{1}\left(\widetilde{U}_{1}(p,t_{m}), \widetilde{U}_{1}(p,t_{m-1}), \widetilde{U}_{2}(p,t_{m})\right)=M_{1}(M_{1}-a_{1} M_{2}-1)\geq0,\nonumber\\
&& \mathcal{G}_{2}\left(\widetilde{U}_{1}(p,t_{m}), \widetilde{U}_{2}(p,t_{m-1}), \widetilde{U}_{2}(p,t_{m})\right)=M_{2}(M_{2}-a_{2}M_{1}-1)\geq0,\\
&&p\in \Omega^{h},\quad  m\geq1.
\end{eqnarray*}
From here, we conclude that $M_{\alpha}$, $\alpha=1,2$, must satisfy the inequalities
\begin{equation}\label{bp88a}
  a_{1}M_{2}+1\leq M_{1}\leq \frac{1}{a_{2}}(M_{2}-1).
\end{equation}

By using  (\ref{bp87a}), it is clear that the inequalities in   (\ref{bp88a}) are satisfied. Thus, we prove (\ref{bp19}).

In the sector $\langle \widehat{U}(t_{m}), \widetilde{U}(t_{m})\rangle=\langle \mathbf{0}, M \rangle$, $M=(M_{1},M_{2})$,  we have
\begin{eqnarray*}
&&\frac{\partial f_{1}}{\partial u_{1}}(U_{1},U_{2})=2 U_{1}(p,t_{m})-a_{1}U_{2}(p,t_{m})-1\leq 2 M_{1},\quad p\in \overline{\Omega}^{h},\\
&& \frac{\partial f_{2}}{\partial u_{2}}(U_{1},U_{2})=2 U_{2}(p,t_{m})-a_{2}U_{1}(p,t_{m})-1\leq 2 M_{2},\quad p\in \overline{\Omega}^{h},\\
&&-\frac{\partial f_{1}}{\partial u_{2}}=a_{1}U_{1}(p,t_{m})\geq0,\quad p\in \overline{\Omega}^{h},\\
&&-\frac{\partial f_{2}}{\partial u_{1}}=a_{2}U_{2}(p,t_{m})\geq0,\quad p\in \overline{\Omega}^{h},\quad m\geq1.
\end{eqnarray*}
From here,  the assumptions in (\ref{bp20}) and (\ref{bp21}) are satisfied  with
\begin{equation*}
c_{1}=2 M_{1},\quad c_{2}= 2 M_{2},
\end{equation*}
and we conclude that Theorem \ref{bp30}  holds  for the Volterra-Lotka competition model (\ref{bp87})
with $(\widetilde{U}_{1}, \widetilde{U}_{2})=(M_{1},M_{2})$ and $(\widehat{U}_{1}, \widehat{U}_{2})=(0,0)$.
\section{Comparison of the block monotone Jacobi and block monotone Gauss--Seidel methods}
The following theorem shows that the block monotone Gauss--Seidel  method (\ref{bp27}), $(\eta=1)$, converge not slower than the block monotone Jacobi method
(\ref{bp27}), $(\eta=0)$.
\begin{theorem}\label{bp89}
Let  $f(p,t_{m},U)$ in (\ref{bp9})  satisfy  (\ref{bp20}) and (\ref{bp21}),
 where    $\widetilde{U}(p,t_{m})=(\widetilde{U}_{1}(p,t_{m}),\widetilde{U}_{2}(p,t_{m}))$ and
 $\widehat{U}(p,t_{m})=(\widehat{U}_{1}(p,t_{m}),\widehat{U}_{2}(p,t_{m}))$
 are ordered upper and lower solutions (\ref{bp19}) of (\ref{bp9}).
 Suppose that $\{(\overline{U}^{(n)}_{\alpha,i,m})_{J},(\underline{U}^{(n)}_{\alpha,i,m})_{J}\}$ and $\{(\overline{U}^{(n)}_{\alpha,i,m})_{GS},(\underline{U}^{(n)}_{\alpha,i,m})_{GS}\}$, $i=0,1,\ldots,N_{x}$, $\alpha=1,2$, $m\geq1$,
 are, respectively, the sequences generated by the block monotone Jacobi method (\ref{bp27}), $(\eta=0)$
 and the block monotone Gauss--Seidel method (\ref{bp27}), $(\eta=1)$,
 where $(\overline{U}^{(0)})_{J}=(\overline{U}^{(0)})_{GS}=\widetilde{U}$ and
  $(\underline{U}^{(0)})_{J}=(\underline{U}^{(0)})_{GS}=\widehat{U}$, then
  \begin{eqnarray}\label{bp90}
&&(\underline{U}^{(n)}_{\alpha,i,m})_{J}\leq  (\underline{U}^{(n)}_{\alpha,i,m})_{GS}\leq (\overline{U}^{(n)}_{\alpha,i,m})_{GS} \leq (\overline{U}^{(n)}_{\alpha,i,m})_{J},\quad i=0,1,\ldots,N_{x},\nonumber \\
&&  \alpha=1,2,\quad m\geq1.
  \end{eqnarray}
\end{theorem}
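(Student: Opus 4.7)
The plan is to fix a time level $m \geq 1$ and induct on the iteration index $n$; an outer induction on $m$ handles the transition between time levels because both methods start on level $m$ from the same upper and lower solutions $\widetilde{U}(p,t_{m})$, $\widehat{U}(p,t_{m})$ and from the same previous-level values $U_{\alpha,i,m-1}$ (identical for $m=1$ since $\psi_{\alpha,i}$ is common initial data, and identical for $m \geq 2$ by the uniqueness of limits from Theorem \ref{bp55} and Theorem \ref{bp63}). The middle inequality $(\underline{U}^{(n)})_{GS} \leq (\overline{U}^{(n)})_{GS}$ in (\ref{bp90}) is a direct restatement of Theorem \ref{bp30} applied to the Gauss--Seidel sequences, so only the two outer inequalities need new work.

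For the upper comparison $(\overline{U}^{(n)})_{GS} \leq (\overline{U}^{(n)})_{J}$, I first rewrite the $Z$-formulation (\ref{bp27}) in an equivalent $U$-form. Substituting $Z^{(n)}_{\alpha,i,m} = U^{(n)}_{\alpha,i,m} - U^{(n-1)}_{\alpha,i,m}$ and expanding $\mathcal{G}_{\alpha,i,m}$ using (\ref{bp18a}) and the definition of $\Gamma_{\alpha}$ in (\ref{bp22}) yields the unified formula
\begin{align*}
(A_{\alpha,i,m}+c_{\alpha,m}I)U^{(n)}_{\alpha,i,m} &= \eta L_{\alpha,i,m} U^{(n)}_{\alpha,i-1,m} + (1-\eta) L_{\alpha,i,m} U^{(n-1)}_{\alpha,i-1,m} \\
&\quad + R_{\alpha,i,m} U^{(n-1)}_{\alpha,i+1,m} + \Gamma_{\alpha,i,m}(U^{(n-1)}) + \tau^{-1} U_{\alpha,i,m-1} - G^{*}_{\alpha,i,m}.
\end{align*}
Subtracting this equation with $\eta=1$ applied to $(\overline{U}^{(n)})_{GS}$ from the equation with $\eta=0$ applied to $(\overline{U}^{(n)})_{J}$, and setting $P^{(n)}_{\alpha,i,m} = (\overline{U}^{(n)}_{\alpha,i,m})_{J} - (\overline{U}^{(n)}_{\alpha,i,m})_{GS}$, one obtains
\begin{align*}
(A_{\alpha,i,m}+c_{\alpha,m}I) P^{(n)}_{\alpha,i,m} &= \bigl[ \Gamma_{\alpha,i,m}((\overline{U}^{(n-1)})_{J}) - \Gamma_{\alpha,i,m}((\overline{U}^{(n-1)})_{GS}) \bigr] \\
&\quad + L_{\alpha,i,m} \bigl[ (\overline{U}^{(n-1)}_{\alpha,i-1,m})_{J} - (\overline{U}^{(n)}_{\alpha,i-1,m})_{GS} \bigr] \\
&\quad + R_{\alpha,i,m} \bigl[ (\overline{U}^{(n-1)}_{\alpha,i+1,m})_{J} - (\overline{U}^{(n-1)}_{\alpha,i+1,m})_{GS} \bigr],
\end{align*}
together with $P^{(n)}_{\alpha,0,m} = P^{(n)}_{\alpha,N_{x},m} = \mathbf{0}$ since both schemes enforce the common Dirichlet data.

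The heart of the inductive step is to check that this right-hand side is nonnegative. The $\Gamma$-difference is $\geq \mathbf{0}$ by Lemma \ref{bp23}, using the inductive hypothesis $(\overline{U}^{(n-1)})_{J} \geq (\overline{U}^{(n-1)})_{GS}$ together with the fact from Theorem \ref{bp30} that both iterates lie inside the sector $\langle \widehat{U}(t_{m}), \widetilde{U}(t_{m}) \rangle$ where the constraints (\ref{bp20}) and (\ref{bp21}) are in force. The $R$-term is nonnegative by the inductive hypothesis and $R_{\alpha,i,m} \geq O$. The $L$-term is controlled by the chain
\begin{equation*}
(\overline{U}^{(n-1)}_{\alpha,i-1,m})_{J} \geq (\overline{U}^{(n-1)}_{\alpha,i-1,m})_{GS} \geq (\overline{U}^{(n)}_{\alpha,i-1,m})_{GS},
\end{equation*}
in which the first inequality is the inductive hypothesis and the second is the monotone decrease of the Gauss--Seidel upper sequence from Theorem \ref{bp30}. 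Since $A_{\alpha,i,m} + c_{\alpha,m} I$ is an $M$-matrix with $(A_{\alpha,i,m} + c_{\alpha,m} I)^{-1} \geq O$, nonnegativity of $P^{(n)}_{\alpha,i,m}$ follows for every $i$; no spatial induction on $i$ is required because the $i-1$ and $i+1$ terms appear only on the right-hand side.

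The lower comparison $(\underline{U}^{(n)})_{J} \leq (\underline{U}^{(n)})_{GS}$ is obtained by an entirely symmetric argument applied to $Q^{(n)}_{\alpha,i,m} = (\underline{U}^{(n)}_{\alpha,i,m})_{GS} - (\underline{U}^{(n)}_{\alpha,i,m})_{J}$, this time invoking the monotone \emph{increase} of the Gauss--Seidel lower sequence from Theorem \ref{bp30}. The main technical obstacle is the careful bookkeeping of iteration indices, since the mixed-level term $(\overline{U}^{(n-1)}_{\alpha,i-1,m})_{J} - (\overline{U}^{(n)}_{\alpha,i-1,m})_{GS}$ is precisely where Jacobi and Gauss--Seidel differ; the clean trick is to split it as an inductive-hypothesis comparison plus a single Gauss--Seidel monotonicity step, both of which have already been established in the preceding results.
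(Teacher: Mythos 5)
Your overall strategy coincides with the paper's: subtract the two iteration schemes in $U$-form, split the mixed term $L_{\alpha,i,m}\bigl[(\overline{U}^{(n-1)}_{\alpha,i-1,m})_{J}-(\overline{U}^{(n)}_{\alpha,i-1,m})_{GS}\bigr]$ into an inductive-hypothesis comparison plus one application of the Gauss--Seidel monotonicity from Theorem \ref{bp30}, control the nonlinear difference with Lemma \ref{bp23} via $\Gamma_{\alpha}$, and invert the $M$-matrix $A_{\alpha,i,m}+c_{\alpha,m}I$. Your observation that no spatial induction on $i$ is needed (because, after the split, every term on the right-hand side sits at iteration $n-1$) is also consistent with how the paper argues.

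There is, however, one genuine flaw: your treatment of the previous-time-level data. You assert that $(U_{\alpha,i,m-1})_{J}$ and $(U_{\alpha,i,m-1})_{GS}$ are identical for $m\geq 2$ ``by the uniqueness of limits from Theorem \ref{bp55} and Theorem \ref{bp63},'' and accordingly you drop the term $\tau^{-1}\bigl[(U_{\alpha,i,m-1})_{J}-(U_{\alpha,i,m-1})_{GS}\bigr]$ from your difference equation for $P^{(n)}_{\alpha,i,m}$. This is not correct: the scheme (\ref{bp27}) sets $U_{\alpha,i,m-1}=U^{(n_{m-1})}_{\alpha,i,m-1}$, a \emph{finite} iterate, and after $n_{m-1}$ steps the Jacobi and Gauss--Seidel iterates are in general different; Theorems \ref{bp55} and \ref{bp63} speak only about the limits $\overline{V},\underline{V}$ as $n\to\infty$. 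The term must therefore be kept, exactly as in the paper's equation (\ref{bp91}). The repair is immediate within your own framework: the outer induction on $m$ already gives $(\overline{U}_{\alpha,i,m-1})_{J}\geq(\overline{U}_{\alpha,i,m-1})_{GS}$ at the terminal iterate of level $m-1$ (and the reverse inequality for the lower sequences), so the retained $\tau^{-1}$ term has the favourable sign and the conclusion goes through. As written, though, the justification is wrong and the displayed identity for $P^{(n)}_{\alpha,i,m}$ is incomplete; note also that this is the only place where the outer induction on $m$ actually does any work, so it should not be dismissed as a matter of the two methods ``starting from the same data.''
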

\begin{proof}
   From (\ref{bp27}), we have
 \begin{eqnarray*}
 &&  A_{\alpha,i,m}(U_{\alpha,i,m}^{(n)})_{J}+c_{\alpha,m}(U^{(n)}_{\alpha,i,m})_{J} =c_{\alpha,m}(U^{(n-1)}_{\alpha,i,m})_{J}+ L_{\alpha,i,m}(U_{\alpha,i-1,m}^{(n-1)})_{J} \\
    &&+R_{\alpha,i,m}(U_{\alpha,i+1,m}^{(n-1)})_{J} - F_{\alpha,i,m}(U_{i,m}^{(n-1)})_{J}+
    \tau^{-1} (U_{\alpha,i,m-1})_{J}-G^{*}_{\alpha,i,m}, \nonumber \\
    && i=1,2,\ldots,N_{x}-1,\quad (U^{(n)}_{\alpha,i,m})_{J}=g_{\alpha,i,m},\quad i=0,N_{x},\quad m\geq1, \\
    && (U^{(n)}_{\alpha,i,0})_{J}=\psi_{\alpha,i},\quad i=0,1,\ldots,N_{x}.
 \end{eqnarray*}
   \begin{eqnarray*}
 &&  A_{\alpha,i,m}(U_{\alpha,i,m}^{(n)})_{GS}+c_{\alpha,m}(U^{(n)}_{\alpha,i,m})_{GS} =c_{\alpha,m}(U^{(n-1)}_{\alpha,i,m})_{GS}\\
    &&+ L_{\alpha,i,m}(U_{\alpha,i-1,m}^{(n)})_{GS} +R_{\alpha,i,m}(U_{\alpha,i+1,m}^{(n-1)})_{GS} - F_{\alpha,i,m}(U_{i,m}^{(n-1)})_{GS}\nonumber \\
    &&+ \tau^{-1} (U_{\alpha,i,m-1})_{GS} -G^{*}_{\alpha,i,m},\quad i=1,2,\ldots,N_{x}-1,\\
    &&(U^{(n)}_{\alpha,i,m})_{GS}=g_{\alpha,i,m},\quad i=0,N_{x},\quad m\geq1,\\
    && (U^{(n)}_{\alpha,i,0})_{GS}=\psi_{\alpha,i},\quad i=0,1,\ldots,N_{x}.
 \end{eqnarray*}
 From here, letting $\underline{W}^{(n)}_{\alpha,i,m}=\left(\underline{U}^{(n)}_{\alpha,i,m}\right)_{GS}-\left(\underline{U}^{(n)}_{\alpha,i,m}\right)_{J}$, $i=0,1,\ldots,N_{x}$, $\alpha=1,2$, $m\geq1$, we have
 \begin{eqnarray}\label{bp91}
   &&   A_{\alpha,i,m}\underline{W}^{(n)}_{\alpha,i,m}+c_{\alpha,m}\underline{W}^{(n)}_{\alpha,i,m} = c_{\alpha,m}\underline{W}^{(n-1)}_{\alpha,i,m} \\ 
       &&+L_{\alpha,i,m}\left((\underline{U}^{(n)}_{\alpha,i-1,m})_{GS}-(\underline{U}^{(n-1)}_{\alpha,i-1,m})_{J}\right)+R_{\alpha,i,m}\underline{W}^{(n-1)}_{\alpha,i+1,m} \nonumber\\
       &&-F_{\alpha,i,m}\left((\underline{U}^{(n-1)}_{i,m})_{GS}\right)+ F_{\alpha,i,m}\left((\underline{U}^{(n-1)}_{i,m})_{J}\right)\nonumber \\
       && +\tau^{-1} \left((U_{\alpha,i,m-1})_{GS}- (U_{\alpha,i,m-1})_{J}\right),\quad i=1,2,\ldots,N_{x}-1, \nonumber \\
    &&W^{(n)}_{\alpha,i,m}=0,\quad i=0,N_{x},\quad m\geq1,\quad W^{(n)}_{\alpha,i,0}=0,\quad i=0,1,\ldots,N_{x}. \nonumber
        \end{eqnarray}
By using  Theorem \ref{bp30}, we have $\left(\underline{U}^{(n)}_{\alpha,i,m}\right)_{GS}\geq \left(\underline{U}^{(n-1)}_{\alpha,i,m}\right)_{GS}$, $i=0,1,\ldots,N_{x}$, $\alpha=1,2$, $m\geq1$. From here and  (\ref{bp91}), we conclude that
 \begin{eqnarray}\label{bp92}
   &&   A_{\alpha,i,m}\underline{W}^{(n)}_{\alpha,i,m}+c_{\alpha,m}\underline{W}^{(n)}_{\alpha,i,m} \geq c_{\alpha,m}\underline{W}^{(n-1)}_{\alpha,i,m}
   +L_{\alpha,i,m}\underline{W}^{(n-1)}_{\alpha,i,m} \\
   &&+R_{\alpha,i,m}\underline{W}^{(n-1)}_{\alpha,i+1,m}-F_{\alpha,i,m}\left((\underline{U}^{(n-1)}_{i,m})_{GS}\right)+ F_{\alpha,i,m}\left((\underline{U}^{(n-1)}_{i,m})_{J}\right)\nonumber \\
       && +\tau^{-1} \left((U_{\alpha,i,m-1})_{GS}- (U_{\alpha,i,m-1})_{J}\right), \nonumber \\
 &&W^{(n)}_{\alpha,i,m}=0,\quad i=0,N_{x},\quad m\geq1,\quad W^{(n)}_{\alpha,i,0}=0,\quad i=0,1,\ldots,N_{x}. \nonumber
        \end{eqnarray}
Taking into account that $(A_{\alpha,i,m}+c_{\alpha,m}I)^{-1}\geq \emph{O}$,
$L_{\alpha,i,m}\geq\emph{O}$, $R_{\alpha,i,m}\geq\emph{O}$, $i=1,2,\ldots,N_{x}-1$,
$\alpha=1,2$, $m\geq1$, for $n=1$ in (\ref{bp92}), on the first time level $m=1$, in view
of $(\underline{U}^{(0)}_{\alpha,i,m})_{GS}=(\underline{U}^{(0)}_{\alpha,i,m})_{J}$
and $\underline{W}^{(0)}_{\alpha,i,m}=\mathbf{0}$,  we conclude that
\begin{equation*}
  \underline{W}^{(1)}_{\alpha,i,1}\geq\mathbf{0},\quad i=0,1,\ldots,N_{x},\quad \alpha=1,2.
\end{equation*}
For $n=2$ in (\ref{bp92}) and using  notation (\ref{bp22}), we obtain
 \begin{eqnarray*}
   &&   \left(A_{\alpha,i,1}+c_{\alpha,1}\right)\underline{W}^{(2)}_{\alpha,i,1} \geq
   L_{\alpha,i,1}\underline{W}^{(1)}_{\alpha,i,1}+R_{\alpha,i,1}\underline{W}^{(1)}_{\alpha,i+1,1} \\
   &&+\Gamma_{\alpha,i,1}\left((\underline{U}^{(1)}_{i,1})_{GS}\right)-
   \Gamma_{\alpha,i,1}\left((\underline{U}^{(1)}_{i,1})_{J}\right)
       , \nonumber \\
 &&W^{(2)}_{\alpha,i,1}=0,\quad i=0,N_{x},\quad W^{(2)}_{\alpha,i,0}=0,\quad i=0,1,\ldots,N_{x}. \nonumber
        \end{eqnarray*}
Taking into account that $(A_{\alpha,i,1}+c_{\alpha,1}I)^{-1}\geq \emph{O}$,
$L_{\alpha,i,1}\geq\emph{O}$, $R_{\alpha,i,1}\geq\emph{O}$, $i=1,2,\ldots,N_{x}-1$,
$\alpha=1,2$, and $ \underline{W}^{(1)}_{\alpha,i,1}\geq\mathbf{0}$, by using (\ref{bp24}), we have
\begin{equation*}
  \underline{W}^{(2)}_{\alpha,i,1}\geq\mathbf{0},\quad i=0,1,\ldots,N_{x},\quad \alpha=1,2.
\end{equation*}
By induction on $n$, we prove that
\begin{equation*}
  \underline{W}^{(n)}_{\alpha,i,1}\geq\mathbf{0},\quad i=0,1,\ldots,N_{x},\quad \alpha=1,2.
\end{equation*}
On the second time level $m=2$, taking into account that $(A_{\alpha,i,2}+c_{\alpha,2}I)^{-1}\geq \emph{O}$,
$L_{\alpha,i,2}\geq\emph{O}$, $R_{\alpha,i,2}\geq\emph{O}$, $i=1,2,\ldots,N_{x}-1$,
$\alpha=1,2$,  $  \underline{W}^{(0)}_{\alpha,i,2}=\mathbf{0}$ and $\underline{W}_{\alpha,i,1}\geq\mathbf{0}$,  from (\ref{bp92}), we have
\begin{equation*}
  \underline{W}^{(1)}_{\alpha,i,2}\geq\mathbf{0},\quad i=0,1,\ldots,N_{x},\quad \alpha=1,2.
\end{equation*}
For $n=2$ in (\ref{bp92}) and using  notation (\ref{bp22}), we obtain
 \begin{eqnarray*}
   &&   \left(A_{\alpha,i,2}+c_{\alpha,2}\right)\underline{W}^{(2)}_{\alpha,i,2} \geq
   L_{\alpha,i,2}\underline{W}^{(1)}_{\alpha,i,2}+R_{\alpha,i,2}\underline{W}^{(1)}_{\alpha,i+1,2} \\
   &&+\Gamma_{\alpha,i,2}\left((\underline{U}^{(1)}_{i,2})_{GS}\right)-
   \Gamma_{\alpha,i,2}\left((\underline{U}^{(1)}_{i,2})_{J}\right)
       , \nonumber \\
 &&W^{(2)}_{\alpha,i,2}=0,\quad i=0,N_{x},\quad W^{(2)}_{\alpha,i,0}=0,\quad i=0,1,\ldots,N_{x}. \nonumber
        \end{eqnarray*}
Taking into account that $(A_{\alpha,i,2}+c_{\alpha,2}I)^{-1}\geq \emph{O}$,
$L_{\alpha,i,2}\geq\emph{O}$, $R_{\alpha,i,2}\geq\emph{O}$, $i=1,2,\ldots,N_{x}-1$,
$\alpha=1,2$, and $ \underline{W}^{(1)}_{\alpha,i,2}\geq\mathbf{0}$, by using (\ref{bp24}), we have
\begin{equation*}
  \underline{W}^{(2)}_{\alpha,i,2}\geq\mathbf{0},\quad i=0,1,\ldots,N_{x},\quad \alpha=1,2.
\end{equation*}
By induction on $n$, we prove that
\begin{equation*}
  \underline{W}^{(n)}_{\alpha,i,2}\geq\mathbf{0},\quad i=0,1,\ldots,N_{x},\quad \alpha=1,2.
\end{equation*}
By induction on $m$, we prove that
\begin{equation*}
  \underline{W}^{(n)}_{\alpha,i,m}\geq\mathbf{0},\quad i=0,1,\ldots,N_{x},\quad \alpha=1,2,\quad m\geq1.
\end{equation*}
Thus, we prove (\ref{bp90}) for lower solutions. By following the same manner,
we can prove (\ref{bp90}) for upper solutions.
\end{proof}
\section{The case of quasi-monotone nonincreasing  reaction functions}
\subsection{The statement of the block nonlinear difference scheme}
We consider the same block nonlinear difference scheme discussed in section 4.1 which is given by (\ref{bp18}).
\subsection{Block monotone Jacobi and Gauss-Seidel methods}
We now  present the block monotone Jacobi and block monotone Gauss--Seidel  methods for the nonlinear
difference scheme (\ref{bp18})
   in the case of quasi-monotone nonincreasing reaction functions (\ref{bp42}).

For solving the nonlinear difference scheme (\ref{bp18}), on each time level $t_{m}$, $m\geq1$,
 we calculate either the  sequence $\{\overline{U}_{1,i,m}^{(n)}, \underline{U}_{2,i,m}^{(n)}\}$,
 or the sequence $\{\underline{U}_{1,i,m}^{(n)}, \overline{U}_{2,i,m}^{(n)}\}$, $i=0,1,\ldots,N_{x}$, $m\geq1$,
by the block  Jacobi and block  Gauss-Seidel methods. In the case
of $\{\overline{U}_{1,i,m}^{(n)}, \underline{U}_{2,i,m}^{(n)}\}$, we have
\begin{subequations}\label{bp94}
\begin{align}\label{bp94a}
&A_{1,i,m}\overline{Z}_{1,i,m}^{(n)}-\eta L_{1,i,m}\overline{Z}^{(n)}_{1,i-1,m}+c_{1,m}\overline{Z}_{1,i,m}^{(n)}=\\
&-\mathcal{G}_{1,i,m}\left(\overline{U}_{1,i,m}^{(n-1)},\overline{U}_{1, i,m-1}, \underline{U}_{2,i,m}^{(n-1)} \right ),\quad  i=1,2,\ldots,  N_{x}-1,\quad m\geq1,\nonumber\\
&A_{2,i,m}\underline{Z}_{2,i,m}^{(n)}-\eta L_{2,i,m}\underline{Z}^{(n)}_{2,i-1,m}+c_{2,m}\underline{Z}_{2,i,m}^{(n)}=\nonumber\\
&-\mathcal{G}_{2,i,m}\left(\underline{U}_{2,i,m}^{(n-1)} ,\underline{U}_{2, i,m-1},\overline{U}_{1,i,m}^{(n-1)} \right ),\quad  i=1,2,\ldots,  N_{x}-1,\quad m\geq1,\nonumber
\end{align}
and in the case of  $\{\underline{U}_{1,i,m}^{(n)}, \overline{U}_{2,i,m}^{(n)}\}$, we have
\begin{align}\label{bp95b}
&A_{1,i,m}\underline{Z}_{1,i,m}^{(n)}-\eta L_{1,i,m}\underline{Z}^{(n)}_{1,i-1,m}+c_{1,m}\underline{Z}_{1,i,m}^{(n)}=\\
&-\mathcal{G}_{1,i,m}\left(\underline{U}_{1,i,m}^{(n-1)} ,\underline{U}_{1, i,m-1}, \overline{U}_{2,i,m}^{(n-1)}\right ),\quad  i=1,2,\ldots,  N_{x}-1,\quad m\geq1,\nonumber\\
&A_{2,i,m}\overline{Z}_{2,i,m}^{(n)}-\eta L_{2,i,m}\overline{Z}^{(n)}_{2,i-1,m}+c_{2,m}\overline{Z}_{2,i,m}^{(n)}=\nonumber\\
&-\mathcal{G}_{2,i,m}\left(\overline{U}_{2,i,m}^{(n-1)},\overline{U}_{2, i,m-1}, \underline{U}_{1,i,m}^{(n-1)} \right ),\quad  i=1,2,\ldots,  N_{x}-1,\quad m\geq1,\nonumber
\end{align}
\begin{equation}\label{bp96c}
Z_{\alpha,i,m}^{(n)}=\left\{ \begin{array}{ll}
g_{\alpha,i,m}- U_{\alpha,i,m}^{(0)},\quad \quad n=1  ,\\
\mathbf{0},\quad\quad\quad\quad \quad \quad \quad \quad  n\geq2,
\end{array}\right.
\quad i=0,N_{x}, \quad \alpha=1,2,
 \end{equation}
 \begin{equation*}
U_{\alpha,i,m}=\psi_{\alpha,i},\quad i=0,1,\ldots,N_{x},\quad \alpha=1,2,
\end{equation*}
 \end{subequations}
 \begin{equation*}
   Z^{(n)}_{\alpha,i,m}=U^{(n)}_{\alpha,i,m}-U^{(n-1)}_{\alpha,i,m},\quad U_{\alpha,i,m}=U^{(n_{m})}_{\alpha,i,m},\quad m\geq1,
 \end{equation*}
where  $c_{\alpha,m}$,  $\alpha=1,2$,  $m\geq1$, are defined in   (\ref{bp26a}),
the residuals $\mathcal{G}_{\alpha,i,m}\Big(\overline{U}_{\alpha,i,m}^{(n-1)}, \\  \overline{U}_{\alpha,i,m-1},
 \underline{U}_{\alpha^{\prime},i,m}^{(n-1)}\Big)$,  $\mathcal{G}_{\alpha,i,m}\left(\underline{U}_{\alpha,i,m}^{(n-1)}, \underline{U}_{\alpha,i,m-1}, \overline{U}_{\alpha^{\prime},i,m}^{(n-1)}\right)$
 are defined in (\ref{bp18a}), $\mathbf{0}$ is zero column vector with $N_{x}-1$ components. The column vectors
   $U_{\alpha,i,m}$, $i=0,1,\ldots,N_{x}$, $\alpha=1,2$, are the approximate  solutions on time level
 $m\geq1$, where $n_{m}$ is a number of iterations on time level   $m\geq1$.
For  $\eta=0$ and $\eta=1$, we have, respectively,  the block Jacobi and block  Gauss--Seidel methods.
\begin{theorem}\label{bp97}
Let   $f(p,t_{m},U)$ in (\ref{bp9})  satisfy  (\ref{bp20}) and (\ref{bp42}),
 where    $\widetilde{U}(p,t_{m})=(\widetilde{U}_{1}(p,t_{m}),\widetilde{U}_{2}(p,t_{m}))$ and
 $\widehat{U}(p,t_{m})=(\widehat{U}_{1}(p,t_{m}),\widehat{U}_{2}(p,t_{m}))$
 are ordered upper and lower solutions (\ref{bp40}) of   (\ref{bp9}). Then the  sequences $\{\overline{U}_{1,i,m}^{(n)}, \underline{U}_{2,i,m}^{(n)}\}$ and
 $\{\underline{U}_{1,i,m}^{(n)}, \overline{U}_{2,i,m}^{(n-1)}\}$  generated by (\ref{bp94}), with $\overline{U}^{(0)}(p,t_{m})=\widetilde{U}(p,t_{m})$ and
    $\underline{U}^{(0)}(p,t_{m})=\widehat{U}(p,t_{m})$ are ordered upper and lower solutions and
 converge monotonically, such that,
\begin{equation}\label{bp98}
   \underline{U}_{\alpha,i,m}^{(n-1)}\leq \underline{U}_{\alpha,i,m}^{(n)}\leq  \overline{U}_{\alpha,i,m}^{(n)}\leq \overline{U}_{\alpha,i,m}^{(n-1)},\ i=0,1, \ldots, N_{x},\quad \alpha=1,2, \quad m\geq1.
  \end{equation}
\end{theorem}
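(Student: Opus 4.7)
I will follow the scheme of Theorem \ref{bp30}, working out the Gauss--Seidel case ($\eta=1$); the Jacobi case ($\eta=0$) is identical after dropping the $L_{\alpha,i,m}Z^{(n)}_{\alpha,i-1,m}$ term. On the first time level $m=1$ the initializations are $\overline{U}^{(0)}=\widetilde{U}$, $\underline{U}^{(0)}=\widehat{U}$. Substituting into (\ref{bp94a}) and (\ref{bp95b}) and invoking the upper/lower-solution inequalities (\ref{bp40b}) I obtain
\begin{equation*}
(A_{\alpha,i,1}+c_{\alpha,1}I)\overline{Z}^{(1)}_{\alpha,i,1}-L_{\alpha,i,1}\overline{Z}^{(1)}_{\alpha,i-1,1}\leq\mathbf{0},\qquad \alpha=1,2,
\end{equation*}
with the reverse inequality for $\underline{Z}^{(1)}$. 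Since $A_{\alpha,i,1}+c_{\alpha,1}I$ is a nonsingular $M$-matrix with nonnegative inverse, $L_{\alpha,i,1}\geq\emph{O}$, and the boundary values of $\overline{Z}^{(1)}$ are $\leq\mathbf{0}$, induction on $i$ yields $\overline{Z}^{(1)}_{\alpha,i,1}\leq\mathbf{0}$ and $\underline{Z}^{(1)}_{\alpha,i,1}\geq\mathbf{0}$ for every $i$ and $\alpha$.

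The central step is to show the ordering $\overline{U}^{(1)}_\alpha\geq\underline{U}^{(1)}_\alpha$ and that $(\overline{U}^{(1)},\underline{U}^{(1)})$ remains an ordered pair of upper and lower solutions of (\ref{bp40}). Setting $W^{(1)}_{\alpha}=\overline{U}^{(1)}_{\alpha}-\underline{U}^{(1)}_{\alpha}$ and subtracting the equation for $\underline{U}^{(1)}_\alpha$ in (\ref{bp95b}) from that for $\overline{U}^{(1)}_\alpha$ in (\ref{bp94a}), the nonlinearities recombine via the notation $\Gamma_\alpha$ from (\ref{bp22}) as
\begin{equation*}
(A_{\alpha,i,1}+c_{\alpha,1}I)W^{(1)}_{\alpha,i,1}-L_{\alpha,i,1}W^{(1)}_{\alpha,i-1,1}
=\Gamma_{\alpha}(\widetilde{U}_{\alpha},\widehat{U}_{\alpha'})-\Gamma_{\alpha}(\widehat{U}_{\alpha},\widetilde{U}_{\alpha'})+R_{\alpha,i,1}W^{(0)}_{\alpha,i+1}.
\end{equation*}
Lemma \ref{bp43} applied with $U=\widetilde{U}$, $V=\widehat{U}$ makes the $\Gamma$-difference nonnegative, and a second $M$-matrix induction on $i$ gives $W^{(1)}_{\alpha,i,1}\geq\mathbf{0}$. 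To verify the upper-solution property I rewrite $\mathcal{G}_1(\overline{U}^{(1)}_1,\psi_1,\underline{U}^{(1)}_2)$ by eliminating $A_{1,i,1}\overline{U}^{(1)}_1-L_{1,i,1}\overline{U}^{(1)}_{1,i-1}$ via (\ref{bp94a}) and applying the mean-value theorem (\ref{bp29}) to the increment $F_1(\overline{U}^{(1)}_1,\underline{U}^{(1)}_2)-F_1(\widetilde{U}_1,\widehat{U}_2)$; the result is
\begin{equation*}
\mathcal{G}_1(\overline{U}^{(1)}_1,\psi_1,\underline{U}^{(1)}_2)
=-\Bigl(c_{1,1}-\tfrac{\partial F_1}{\partial u_1}\Bigr)\overline{Z}^{(1)}_{1,i,1}-R_{1,i,1}\overline{Z}^{(1)}_{1,i+1,1}+\tfrac{\partial F_1}{\partial u_2}\underline{Z}^{(1)}_{2,i,1}.
\end{equation*}
Each term is $\geq\mathbf{0}$: the first by (\ref{bp20}) together with $\overline{Z}^{(1)}\leq\mathbf{0}$, the second by $R\geq\emph{O}$ and $\overline{Z}^{(1)}\leq\mathbf{0}$, and the third by (\ref{bp42}), which here reads $\partial F_1/\partial u_2\geq\emph{O}$, combined with $\underline{Z}^{(1)}_2\geq\mathbf{0}$. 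The symmetric computation gives $\mathcal{G}_\alpha(\underline{U}^{(1)}_\alpha,\psi_\alpha,\overline{U}^{(1)}_{\alpha'})\leq\mathbf{0}$. Induction on $n$ then delivers (\ref{bp98}) on $m=1$.

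For $m\geq 2$ I argue as in the closing paragraphs of Theorem \ref{bp30}: since $\overline{U}_\alpha(t_{m-1})\leq\widetilde{U}_\alpha(t_{m-1})$ from the previous level and the coefficient of $U(t_{m-1})$ in $\mathcal{G}_\alpha$ is $-\tau^{-1}$, one has $\mathcal{G}_\alpha(\widetilde{U}_\alpha(t_m),\overline{U}_\alpha(t_{m-1}),\widehat{U}_{\alpha'}(t_m))\geq\mathcal{G}_\alpha(\widetilde{U}_\alpha(t_m),\widetilde{U}_\alpha(t_{m-1}),\widehat{U}_{\alpha'}(t_m))\geq\mathbf{0}$, so $\widetilde{U}(t_m)$ is an admissible initial upper iterate at time level $m$; the analogous inequality holds for $\widehat{U}(t_m)$. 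The previous two paragraphs then repeat verbatim at level $m$, and induction on $m$ closes the proof.

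The main obstacle is the second paragraph. In contrast with the quasi-monotone nondecreasing case of Theorem \ref{bp30}, where the iterations for $\overline{U}_\alpha$ and $\underline{U}_\alpha$ decouple species-by-species and the sandwich follows from the scalar monotonicity of $\Gamma$ in Lemma \ref{bp23}, here the equations (\ref{bp94a})--(\ref{bp95b}) are \emph{cross-coupled}: $\overline{U}_1$ is driven by $\underline{U}_2$ and $\underline{U}_2$ by $\overline{U}_1$, with the analogous pairing for (\ref{bp95b}). Establishing $\overline{U}^{(n)}\geq\underline{U}^{(n)}$ therefore requires the cross-monotonicity $\Gamma_\alpha(U_1,V_2)\geq\Gamma_\alpha(V_1,U_2)$ of Lemma \ref{bp43}, and its hypotheses force me to keep careful track of which arguments are upper and which are lower throughout both the $n$- and $m$-inductions.
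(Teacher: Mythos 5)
Your proposal is correct and follows essentially the same route as the paper: the $M$-matrix induction on $i$ to fix the signs of $\overline{Z}^{(1)}_{\alpha,i,1}$ and $\underline{Z}^{(1)}_{\alpha,i,1}$, the cross-monotonicity $\Gamma_{\alpha}(U_1,V_2)\geq\Gamma_{\alpha}(V_1,U_2)$ of Lemma \ref{bp43} to order the cross-coupled iterates, the mean-value identity (\ref{bp29}) to verify that $(\overline{U}^{(1)}_{1},\underline{U}^{(1)}_{2})$ and $(\underline{U}^{(1)}_{1},\overline{U}^{(1)}_{2})$ remain upper/lower solutions (\ref{bp40b}), and the $-\tau^{-1}$ monotonicity in the $t_{m-1}$ argument to restart the induction on each new time level. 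Your closing remark correctly identifies the one genuinely new ingredient relative to Theorem \ref{bp30}, namely that the cross-coupling forces Lemma \ref{bp43} in place of Lemma \ref{bp23}.
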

\begin{proof}
 We consider the case of Gauss-Seidel method $\eta=1$, and the case of the  Jacobi method can be proved by a similar manner.
On first time level $m=1$,
since $\overline{U}^{(0)}$ and $\underline{U}^{(0)}$ are ordered  upper and lower  solution (\ref{bp40}) with respect to $U_{\alpha}(p,0)=\psi_{\alpha}(p)$, from (\ref{bp94a}) and (\ref{bp96c}), we have
\begin{eqnarray}\label{bp99}
&&(A_{1,i,1}+c_{1,1}I)\overline{Z}_{1,i,1}^{(1)}\leq  L_{1,i,1}\overline{Z}^{(1)}_{1,i-1,1},\quad i=1,2,\ldots,N_{x}-1,\nonumber\\
&& (A_{2,i,1}+c_{2,1}I)\underline{Z}_{2,i,1}^{(1)}\geq  L_{2,i,1}\underline{Z}^{(1)}_{2,i-1,1},\quad i=1,2,\ldots,N_{x}-1, \nonumber \\
&& \overline{Z}_{1,i,1}^{(1)}\leq\mathbf{0},\quad \underline{Z}_{2,i,1}^{(1)}\geq\mathbf{0},\quad i=0,N_{x},
\end{eqnarray}
where $I$ is the identity matrix. For $i=1$ in (\ref{bp99}), taking into account that $
L_{\alpha,i,1}\geq\emph{O}$, $i=1,2,\ldots,N_{x}-1$, and $\overline{Z}^{(1)}_{1,0,1}\leq\mathbf{0}$,
$\underline{Z}^{(1)}_{2,0,1}\geq\mathbf{0}$, we have   $\left( A_{1,1,1}+c_{1,1}I\right)\overline{Z}_{1,1,1}^{(1)}\leq\mathbf{0}$,
 $\left( A_{2,1,1}+c_{2,1}I\right)\underline{Z}_{2,1,1}^{(1)}\geq\mathbf{0}$. Taking into account that $d_{\alpha,ij}>0$, $b_{\alpha,ij}$, $t_{\alpha,ij}\geq0$, $\alpha=1,2$, in (\ref{bp17})
and $A_{\alpha,i,1}$ are strictly diagonal dominant matrix, we conclude that  $A_{\alpha,i,1}$, $i=1,2,\ldots,N_{x}-1$, $\alpha=1,2$,
are $M$-matrices  and $A_{\alpha,i,1}^{-1}\geq\emph{O}$ (Corollary 3.20, \cite{Varga2000}),   which leads to $(A_{\alpha,i,1}+c_{\alpha,1}I)^{-1}\geq\emph{O}$, where $\emph{O}$ is the ($N_{y}-1)\times (N_{y}-1$) null  matrix. From here, we obtain that
\begin{equation*}
 \overline{Z}_{1,1,1}^{(1)}\leq \mathbf{0},\quad  \underline{Z}_{2,1,1}^{(1)}\geq \mathbf{0}.
\end{equation*}
From here, for $i=2$ in (\ref{bp99}), in a similar manner, we conclude that
\begin{equation*}
 \overline{Z}_{1,2,1}^{(1)}\leq \mathbf{0},\quad  \underline{Z}_{2,2,1}^{(1)}\geq \mathbf{0}.
\end{equation*}
By induction on $i$, we can prove that
\begin{equation}\label{bp99a}
 \overline{Z}_{1,i,1}^{(1)}\leq \mathbf{0},\quad  \underline{Z}_{2,i,1}^{(1)}\geq \mathbf{0},\quad i=0,1,\ldots,N_{x}.
\end{equation}
From (\ref{bp95b}) and (\ref{bp96c}), by a similar manner,  we prove that
\begin{equation}\label{bp99b}
 \underline{Z}_{1,i,1}^{(1)}\geq \mathbf{0},\quad  \overline{Z}_{2,i,1}^{(1)}\leq \mathbf{0},\quad i=0,1,\ldots,N_{x}.
\end{equation}
We now prove that $\overline{U}^{(1)}_{\alpha,i,1}$ and  $\underline{U}^{(1)}_{\alpha,i,1}$, $i=0,1,\ldots,N_{x}$, $\alpha=1,2$, satisfy  (\ref{bp40a})
 with respect to the  column vector  $U_{\alpha,i,0}=\psi_{\alpha,i}$, $i=0,1,\ldots,N_{x}$. Let $W^{(1)}_{\alpha,i,1}=\overline{U}^{(1)}_{\alpha,i,1}-\underline{U}^{(1)}_{\alpha,i,1}$, $i=0,1,\ldots,N_{x}$, $\alpha=1,2$, from (\ref{bp94}), we have
 \begin{eqnarray*}
 && (A_{\alpha,i,1}+c_{\alpha,1}I)W_{\alpha,i,1}^{(1)}=L_{\alpha,i,1}W_{\alpha,i-1,1}^{(1)}+R_{\alpha,i,1}W_{\alpha,i+1,1}^{(0)}\\ && +c_{\alpha,1}\overline{U}_{\alpha,i,1}^{(0)}-F_{\alpha,i,1}(\overline{U}_{\alpha,i,1}^{(0)},\underline{U}_{\alpha^{\prime},i,1}^{(0)})\\
&&-\left[c_{\alpha,1}\underline{U}_{\alpha,i,1}^{(0)} - F_{\alpha,i,1}(\underline{U}_{\alpha,i,1}^{(0)},\overline{U}_{\alpha^{\prime},i,1}^{(0)})\right], \\
 && i=1,2, \ldots, N_{x}-1,\quad  W_{\alpha,i,1}^{(1)}=\mathbf{0},\quad i=0, N_{x},\\
 && W_{\alpha,i,0}=\mathbf{0},\quad i=0,1,\ldots,N_{x},\quad \alpha^{\prime}\neq\alpha,\quad \alpha, \alpha^{\prime}=1,2.
\end{eqnarray*}
Using  notation (\ref{bp22}) with  $(U_{1},U_{2})=(\overline{U}_{1,i,1}^{(0)},\underline{U}_{2,i,1}^{(0)})$ and
$(V_{1},V_{2})=(\underline{U}_{1,i,1}^{(0)}, \\ \overline{U}_{2,i,1}^{(0)})$, we present the above problem in the form
\begin{eqnarray*}
 && (A_{\alpha,i,1}+c_{\alpha,1}I)W_{\alpha,i,1}^{(1)}=L_{\alpha,i,1}W_{\alpha,i-1,1}^{(1)}+R_{\alpha,i,1}W_{\alpha,i+1,1}^{(0)}\\ && +\Gamma_{\alpha,i,1}(\overline{U}_{\alpha,i,1}^{(0)},\underline{U}_{\alpha^{\prime},i,1}^{(0)})- \Gamma_{\alpha,i,1}(\underline{U}_{\alpha,i,1}^{(0)},\overline{U}_{\alpha^{\prime},i,1}^{(0)}), \\
 && i=1,2, \ldots, N_{x}-1,\quad  W_{\alpha,i,1}^{(1)}=\mathbf{0},\quad i=0, N_{x},\\
 && W_{\alpha,i,0}=\mathbf{0},\quad i=0,1,\ldots,N_{x},\quad \alpha^{\prime}\neq\alpha,\quad \alpha, \alpha^{\prime}=1,2.
\end{eqnarray*}
From (\ref{bp44}), taking into account that $ R_{\alpha,i,1}\geq \emph{O} $,
$i=1,2,\ldots,N_{x}-1$,  and
$ W_{\alpha,i,1}^{(0)}\geq\mathbf{0}$, $i=0,1,\ldots,N_{x}$, $\alpha=1,2$,  we conclude that
\begin{eqnarray}\label{bp100}
 && (A_{\alpha,i,1}+c_{\alpha,1}I)W_{\alpha,i,1}^{(1)}\geq L_{\alpha,i,1}W_{\alpha,i-1,1}^{(1)},\quad  i=1,2, \ldots, N_{x}-1, \\
 && W_{\alpha,i,1}^{(1)}=\mathbf{0},\quad i=0, N_{x},\quad  W_{\alpha,i,0}=\mathbf{0},\quad i=0,1,\ldots,N_{x},\quad \alpha=1,2.\nonumber
\end{eqnarray}
For $i=1$ in (\ref{bp100}), taking into account that $L_{\alpha,i,1}\geq \emph{O}$, $i=1,2,\ldots,N_{x}-1$,
$W_{\alpha,0,1}^{(1)}=\mathbf{0}$, 
 and $(A_{\alpha,i,1}+c_{\alpha,1}I)^{-1}\geq\emph{O}$, $i=1,2,\ldots,N_{x}-1$, $\alpha=1,2$,
 (Corollary 3.20, \cite{Varga2000}), we have
 \begin{equation*}
   W_{\alpha,1,1}^{(1)}\geq\mathbf{0},\quad \alpha=1,2.
 \end{equation*}
 For $i=2$ in (\ref{bp100}), taking into account that $ W_{\alpha,1,1}^{(1)}\geq\mathbf{0}$, by a similar manner,  we obtain
  \begin{equation*}
   W_{\alpha,2,1}^{(1)}\geq\mathbf{0},\quad \alpha=1,2.
 \end{equation*}
 By induction on $i$, we can prove that
  \begin{equation*}
   W_{\alpha,i,1}^{(1)}\geq\mathbf{0},\quad i=0,1,\ldots,N_{x},\quad \alpha=1,2.
 \end{equation*}
 Now, by induction on $n$, we can prove that
 \begin{equation*}
  W_{\alpha,i,1}^{(n)}\geq\mathbf{0},\quad i=0,1,\ldots,N_{x},\quad \alpha=1,2.
 \end{equation*}
 Thus, we prove (\ref{bp40a}) on the first time level $m=1$. We now prove (\ref{bp40b}).
 From (\ref{bp94a}) and using (\ref{bp29}), we obtain
\begin{align}\label{bp101}
&\mathcal{G}_{1,i,1}\left(\overline{U}_{1,i,1}^{(1)}, \psi_{1,i}, \underline{U}_{2,i,1}^{(1)}\right)=-\left(c_{1,1}-\frac{\partial F_{1,i,1}(\overline{E}^{(1)}_{1,i,1},\underline{U}^{(1)}_{2,i,1})}{\partial u_{1}}\right) \overline{Z}_{1,i,1}^{(1)}\nonumber\\&+\frac{\partial F_{1,i,1}(\overline{U}^{(0)}_{1,i,1},\underline{E}^{(1)}_{2,i,1})}{\partial u_{2}}\underline{Z}_{2,i,1}^{(1)}
-R_{1,i,1}\overline{Z}_{1,i+1,1}^{(1)},\quad i=1,2, \ldots, N_{x}-1,
\end{align}
where
 \begin{equation*}
\overline{U}^{(1)}_{1,i,1}\leq\overline{E}^{(1)}_{1,i,1}\leq \overline{U}^{(0)}_{1,i,1},\quad \underline{U}^{(0)}_{2,i,1}\leq\underline{E}^{(1)}_{2,i,1}\leq \underline{U}^{(1)}_{2,i,1} , \quad i=0,1,\ldots,N_{x}.
\end{equation*}
From  (\ref{bp99a}), (\ref{bp99b}) and taking into account that $W^{(1)}_{\alpha,i,1}\geq0$, $i=0,1,\ldots,N_{x}$, $\alpha=1,2$, it follows
that the partial derivatives in (\ref{bp101}) satisfy (\ref{bp20}) and (\ref{bp42}). From (\ref{bp20}), (\ref{bp42}), (\ref{bp99a}), (\ref{bp99b}), (\ref{bp101})
and taking into account that $R_{1,i,1}\geq\emph{O}$, $ i=1,2, \ldots, N_{x}-1$, we conclude that
\begin{equation}\label{bp102}
  \mathcal{G}_{1,i,1}\left(\overline{U}_{1,i,1}^{(1)}, \psi_{1,i}, \underline{U}_{2,i,1}^{(1)}\right)\geq \mathbf{0},\quad i=1,2,\ldots,N_{x}.
\end{equation}
Similarly, we conclude that
 \begin{equation}\label{bp103}
  \mathcal{G}_{2,i,1}\left( \underline{U}_{2,i,1}^{(1)}, \psi_{1,i},\overline{U}_{1,i,1}^{(1)}\right)\leq\mathbf{0},\quad i=1,2,\ldots,N_{x}.
\end{equation}
By a similar argument, from (\ref{bp95b}), we prove that
\begin{eqnarray}\label{bp104}
 && \mathcal{G}_{1,i,1}\left(\underline{U}_{1,i,1}^{(1)}, \psi_{1,i}, \overline{U}_{2,i,1}^{(1)}\right)\leq\mathbf{0},\quad
 \mathcal{G}_{2,i,1}\left(\overline{U}_{2,i,1}^{(1)}, \psi_{1,i}, \underline{U}_{1,i,1}^{(1)} \right)\geq\mathbf{0},\\
 &&\quad i=1,2,\ldots,N_{x}-1.\nonumber
\end{eqnarray}
Thus, from (\ref{bp102})--(\ref{bp104}), it follows  (\ref{bp40b}) on the first time level $m=1$.
By induction on $n$, we can prove (\ref{bp98}) on the first time level $m=1$.

On the second time level $m=2$, from (\ref{bp94a}) and (\ref{bp98}), we have $\overline{U}_{1,i,1}\leq
\widetilde{U}_{1,i,1} $, $i=0,1,\ldots,N_{x}$. Thus, it follows that 
\begin{eqnarray*}
&&  \mathcal{G}_{1,i,2}\left(\widetilde{U}_{1,i,2} ,\overline{U}_{1,i,1}, \widehat{U}_{2,i,2}\right)\geq \mathcal{G}_{1,i,2}\left(\widetilde{U}_{1, i,2},\widetilde{U}_{1,i,1}, \widehat{U}_{2,i,2}\right )\geq\mathbf{0},\\
&& \mathcal{G}_{2,i,2}\left(\widehat{U}_{2,i,2} ,\underline{U}_{1,i,1}, \widetilde{U}_{1,i,2}\right)\leq \mathcal{G}_{2,i,2}\left(\widehat{U}_{2, i,2},\widehat{U}_{1,i,1}, \widetilde{U}_{1,i,2}\right )\leq\mathbf{0} \\
&&  i=1,2\ldots,N_{x},
\end{eqnarray*}
which means that  $\widetilde{U}_{1,i,2}$ and $\widehat{U}_{2,i,2}$,
  $i=0,1,\ldots,N_{x}$, are, respectively, upper and lower solutions  with respect to $\overline{U}_{1,i,1}$ and $\underline{U}_{1,i,1}$,
   $i=0,1,\ldots,N_{x}$, $\alpha=1,2$.

   Similarly, we can obtain that
\begin{eqnarray*}
 && \mathcal{G}_{1,i,2}\left(\widehat{U}_{1,i,2},\underline{U}_{1,i,1}, \widetilde{U}_{2,i,2}\right )\leq\mathbf{0},\quad
  \mathcal{G}_{2,i,2}\left(\widetilde{U}_{2,i,2},\overline{U}_{2,i,1}, \widehat{U}_{1,i,2}\right )\geq\mathbf{0}, \\
 &&  i=1,2\ldots,N_{x}-1,
\end{eqnarray*}
which means that  $\widehat{U}_{1,i,2}$ and $\widetilde{U}_{2,i,2}$,
  $i=0,1,\ldots,N_{x}$, are, respectively, lower and upper solutions  with respect to $\underline{U}_{1,i,1}$ and $\overline{U}_{2,i,1}$,
   $i=0,1,\ldots,N_{x}$.

From  (\ref{bp94a}) and  (\ref{bp96c}), we have
\begin{eqnarray}\label{bp105}
&&(A_{1,i,2}+c_{1,2}I)\overline{Z}_{1,i,2}^{(1)}\leq  L_{1,i,2}\overline{Z}^{(1)}_{1,i-1,2},\quad i=1,2,\ldots,N_{x}-1,\nonumber\\
&& (A_{2,i,2}+c_{2,2}I)\underline{Z}_{2,i,2}^{(1)}\geq  L_{2,i,2}\underline{Z}^{(1)}_{2,i-1,2},\quad i=1,2,\ldots,N_{x}-1, \nonumber \\
&& \overline{Z}_{1,i,2}^{(1)}\leq\mathbf{0},\quad \underline{Z}_{2,i,2}^{(1)}\geq\mathbf{0},\quad i=0,N_{x},
\end{eqnarray}
where $I$ is the identity matrix. For $i=1$ in (\ref{bp105}), taking into account that $
L_{\alpha,i,2}\geq\emph{O}$, $i=1,\ldots,N_{x}-1$, and $\overline{Z}^{(1)}_{1,0,2}\leq\mathbf{0}$,
$\underline{Z}^{(1)}_{2,0,2}\geq\mathbf{0}$, we have   $\left( A_{1,1,2}+c_{1,2}I\right)\overline{Z}_{1,1,2}^{(1)}\leq\mathbf{0}$,
 $\left( A_{2,1,2}+c_{2,2}I\right)\underline{Z}_{2,1,2}^{(1)}\geq\mathbf{0}$. Taking into account that $d_{\alpha,ij}>0$, $b_{\alpha,ij}$, $t_{\alpha,ij}\geq0$, $\alpha=1,2$, in (\ref{bp17})
and $A_{\alpha,i,2}$ are strictly diagonal dominant matrix, we conclude that  $A_{\alpha,i,2}$, $i=1,2,\ldots,N_{x}-1$, $\alpha=1,2$,
are $M$-matrices  and $A_{\alpha,i,2}^{-1}\geq\emph{O}$ (Corollary 3.20, \cite{Varga2000}),   which leads to $(A_{\alpha,i,2}+c_{\alpha,2}I)^{-1}\geq\emph{O}$, where $\emph{O}$ is the ($N_{y}-1)\times (N_{y}-1$) null  matrix. From here, we obtain that
\begin{equation*}
 \overline{Z}_{1,1,2}^{(1)}\leq \mathbf{0},\quad  \underline{Z}_{2,1,2}^{(1)}\geq \mathbf{0}.
\end{equation*}
From here, for $i=2$ in (\ref{bp105}), in a similar manner, we conclude that
\begin{equation*}
 \overline{Z}_{1,2,2}^{(1)}\leq \mathbf{0},\quad  \underline{Z}_{2,2,2}^{(1)}\geq \mathbf{0}.
\end{equation*}
By induction on $i$, we can prove that
\begin{equation*}
 \overline{Z}_{1,i,2}^{(1)}\leq \mathbf{0},\quad  \underline{Z}_{2,i,2}^{(1)}\geq \mathbf{0},\quad i=0,1,\ldots,N_{x}.
\end{equation*}
By a similar argument, for  $\{\underline{U}^{(n)}_{1,i,2},\overline{U}^{(n)}_{2,i,2}\} $, from (\ref{bp95b}) and (\ref{bp96c}),  we can prove that
\begin{equation*}
 \underline{Z}_{1,i,2}^{(1)}\geq \mathbf{0},\quad  \overline{Z}_{2,i,2}^{(1)}\leq \mathbf{0},\quad i=0,1,\ldots,N_{x}.
\end{equation*}
The proof that $\overline{U}^{(1)}_{\alpha,i,2}$ and  $\underline{U}^{(1)}_{\alpha,i,2}$, $\alpha=1,2$,
are ordered upper and lower solutions (\ref{bp40})
 repeats  the proof on the first
 time level $m=1$. By induction on $n$, we can prove (\ref{bp98}) on the second time level $m=2$. By induction on $m$, we can prove (\ref{bp98}) for  $m\geq1$.
\end{proof}
\subsection{Existence and uniqueness of  a solution to the nonlinear difference   scheme (\ref{bp18})}
In the following theorem, we prove the existence of a solution to  (\ref{bp18}) based on Theorem \ref{bp97}.
\begin{theorem}\label{bp106}
 Let  $f(p,t_{m},U)$ satisfy (\ref{bp20}), where $\widetilde{U}_{\alpha,i,m}$ and $\widehat{U}_{\alpha,i,m}$,
  $i=0,1\ldots,N_{x}$, $\alpha=1,2$,
   $m\geq1$, be ordered upper and lower solutions (\ref{bp40}) to (\ref{bp18}).
  Then a solution of the nonlinear implicit difference  scheme (\ref{bp18})
  exists in $\langle\widehat{U}(t_{m}),\widetilde{U}(t_{m})\rangle$, $m\geq1$.
  \end{theorem}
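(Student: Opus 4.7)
The plan is to mirror the proof of Theorem \ref{bp55}, substituting Theorem \ref{bp97} for Theorem \ref{bp30}, and handling the additional bookkeeping caused by the fact that in the nonincreasing case the upper/lower sequences are updated in coupled pairs $\{\overline{U}_{1}^{(n)},\underline{U}_{2}^{(n)}\}$ and $\{\underline{U}_{1}^{(n)},\overline{U}_{2}^{(n)}\}$ via (\ref{bp94}).

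First I would fix $m=1$ and set $\overline{U}^{(0)}(p,t_{1})=\widetilde{U}(p,t_{1})$, $\underline{U}^{(0)}(p,t_{1})=\widehat{U}(p,t_{1})$. Theorem \ref{bp97} gives (\ref{bp98}), so the sequences are monotone and order-bounded by $\widehat{U}$ and $\widetilde{U}$; hence the componentwise limits
\begin{equation*}
\overline{V}_{\alpha,i,1}=\lim_{n\to\infty}\overline{U}^{(n)}_{\alpha,i,1},\qquad \underline{V}_{\alpha,i,1}=\lim_{n\to\infty}\underline{U}^{(n)}_{\alpha,i,1},\quad i=0,\ldots,N_{x},\ \alpha=1,2,
\end{equation*}
exist and satisfy $\widehat{U}_{\alpha,i,1}\le \underline{V}_{\alpha,i,1}\le\overline{V}_{\alpha,i,1}\le\widetilde{U}_{\alpha,i,1}$. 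In particular $\overline{Z}^{(n)}_{\alpha,i,1},\underline{Z}^{(n)}_{\alpha,i,1}\to\mathbf{0}$.

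Second, I would pass to the limit in (\ref{bp94a}) and (\ref{bp95b}). Rewriting the left-hand sides in the form analogous to (\ref{bp57}), the terms multiplying $\overline{Z}^{(n)}$ and $\underline{Z}^{(n)}$ remain bounded (by the two-sided derivative bound (\ref{bp20}) and by $c_{\alpha,m}$ and the entries of $L_{\alpha,i,1}$, $R_{\alpha,i,1}$), so taking $n\to\infty$ and using continuity of $F_{\alpha,i,1}$ on the compact sector yields
\begin{equation*}
\mathcal{G}_{1,i,1}\bigl(\overline{V}_{1,i,1},\psi_{1,i},\underline{V}_{2,i,1}\bigr)=\mathbf{0},\qquad \mathcal{G}_{2,i,1}\bigl(\underline{V}_{2,i,1},\psi_{2,i},\overline{V}_{1,i,1}\bigr)=\mathbf{0},
\end{equation*}
together with the twin identities for $(\underline{V}_{1,i,1},\overline{V}_{2,i,1})$ from (\ref{bp95b}). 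This exhibits solutions of (\ref{bp18}) on $t_{1}$ lying in $\langle\widehat{U}(t_{1}),\widetilde{U}(t_{1})\rangle$.

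Third, I would advance by induction on $m$. The key lemma to reprove at each step is that $\widetilde{U}(p,t_{m+1})$ and $\widehat{U}(p,t_{m+1})$ remain ordered upper and lower solutions in the sense of (\ref{bp40}) with respect to the computed values $\overline{V}_{\alpha,i,m}$, $\underline{V}_{\alpha,i,m}$, rather than with respect to $\widetilde{U}_{\alpha,i,m}$, $\widehat{U}_{\alpha,i,m}$. This uses only that $\mathcal{G}_{\alpha,i,m+1}$ depends on the previous-level value through the term $-\tau^{-1}U_{\alpha,i,m}$, so the inequalities $\overline{V}_{\alpha,i,m}\le\widetilde{U}_{\alpha,i,m}$ and $\underline{V}_{\alpha,i,m}\ge\widehat{U}_{\alpha,i,m}$ give, exactly as in the estimate displayed right after (\ref{bp56}) in the proof of Theorem \ref{bp55},
\begin{equation*}
\mathcal{G}_{\alpha,i,m+1}\bigl(\widetilde{U}_{\alpha,i,m+1},\overline{V}_{\alpha,i,m},\widehat{U}_{\alpha^{\prime},i,m+1}\bigr)\ge \mathcal{G}_{\alpha,i,m+1}\bigl(\widetilde{U}_{\alpha,i,m+1},\widetilde{U}_{\alpha,i,m},\widehat{U}_{\alpha^{\prime},i,m+1}\bigr)\ge\mathbf{0},
\end{equation*}
and the dual inequality for $\widehat{U}$. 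The hypotheses of Theorem \ref{bp97} then hold on $t_{m+1}$, and the limit argument of the first two steps produces $\overline{V}_{\alpha,i,m+1}$, $\underline{V}_{\alpha,i,m+1}$ solving (\ref{bp18}).

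The principal obstacle, as I see it, is not analytic but structural: in the nonincreasing setting the upper/lower sequences are intertwined, so one must keep track of which component $(\overline{U}_{1},\underline{U}_{2})$ or $(\underline{U}_{1},\overline{U}_{2})$ one is limiting in and verify that \emph{both} pairs of limits independently satisfy (\ref{bp18}). This is handled by applying the same passage-to-the-limit argument separately to (\ref{bp94a}) and (\ref{bp95b}), and the cross-coupling terms $\underline{U}^{(n-1)}_{\alpha^{\prime},i,m}$, $\overline{U}^{(n-1)}_{\alpha^{\prime},i,m}$ cause no trouble since they also converge, by Theorem \ref{bp97}, to the appropriate limits in the respective residuals.
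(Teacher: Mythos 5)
Your proposal is correct and follows essentially the same route as the paper's proof: monotone bounded sequences from Theorem \ref{bp97} yield limits, passing to the limit in the residual identities (the analogue of (\ref{bp57}) here is (\ref{bp108})) shows both coupled pairs $(\overline{V}_{1},\underline{V}_{2})$ and $(\underline{V}_{1},\overline{V}_{2})$ solve (\ref{bp18}), and the induction on $m$ hinges on exactly the observation you make, that $\widetilde{U}(t_{m+1})$, $\widehat{U}(t_{m+1})$ remain ordered upper and lower solutions with respect to the computed limits via the $-\tau^{-1}U_{\alpha,i,m}$ term. No substantive differences.
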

  \begin{proof}
  We consider  the  Gauss--Seidel method $(\eta=1)$ in (\ref{bp94}).
 On the first time level $t_{1}$, from (\ref{bp98}), we conclude that
  $\lim \overline{U}^{(n)}_{\alpha,i,1}=\overline{V}_{\alpha,i,1}$,
  $\lim \underline{U}^{(n)}_{\alpha,i,1}=\underline{V}_{\alpha,i,1}$, $i=0,1,\ldots,N_{x}$, $\alpha=1,2$
  as $n\rightarrow \infty$ exist, and
  \begin{eqnarray}\label{bp107}
&&\widehat{U}_{\alpha,i,1} \leq   \underline{U}^{(n-1)}_{\alpha,i,1}\leq
\underline{U}^{(n)}_{\alpha,i,1}\leq\underline{V}_{\alpha,i,1},\quad \overline{V}_{\alpha,i,1}\leq   \overline{U}^{(n)}_{\alpha,i,1}\leq\overline{U}^{(n-1)}_{\alpha,i,1}\leq \widetilde{U}_{\alpha,i,1},\nonumber\\
&& \lim_{n \rightarrow \infty} \overline{Z}^{(n)}_{\alpha,i,1}=\mathbf{0},\quad \lim_{n \rightarrow \infty} \underline{Z}^{(n)}_{\alpha,i,1}=\mathbf{0},\quad i=0,1,\ldots,N_{x},\quad \alpha=1,2,
  \end{eqnarray}
  where $\overline{U}^{(0)}_{\alpha,i,1}=\widetilde{U}_{\alpha,i,1}$,
  $\underline{U}^{(0)}_{\alpha,i,1}=\widehat{U}_{\alpha,i,1}$. Similar to (\ref{bp101}), we have
  \begin{align}\label{bp108}
&\mathcal{G}_{1,i,1}\left(\overline{U}_{1,i,1}^{(n)}, \psi_{1,i}, \underline{U}_{2,i,1}^{(n)}\right)=-\left(c_{1,1}-\frac{\partial F_{1,i,1}(\overline{E}^{(n)}_{1,i,1},\underline{U}^{(n)}_{2,i,1})}{\partial u_{1}}\right) \overline{Z}_{1,i,1}^{(n)}\nonumber\\&+\frac{\partial F_{1,i,1}(\overline{U}^{(n-1)}_{1,i,1},\underline{E}^{(n)}_{2,i,1})}{\partial u_{2}}\underline{Z}_{2,i,1}^{(n)}
-R_{1,i,1}\overline{Z}_{1,i+1,1}^{(n)},\quad i=1,2, \ldots, N_{x}-1, \nonumber\\
&\overline{U}^{(n)}_{1,i,1}\leq\overline{E}^{(n)}_{1,i,1}\leq \overline{U}^{(n-1)}_{1,i,1},\quad \underline{U}^{(n-1)}_{2,i,1}\leq\underline{E}^{(n)}_{2,i,1}\leq \underline{U}^{(n)}_{2,i,1} , \quad i=0,1,\ldots,N_{x}.
\end{align}
By taking the limit of both side of (\ref{bp108}) and using (\ref{bp107}), we conclude that
\begin{equation}\label{bp109}
  \mathcal{G}_{1,i,1}\left(\overline{V}_{1,i,1}, \psi_{1,i}, \underline{V}_{2,i,1}\right)=\mathbf{0},\quad i=1,2,\ldots,N_{x}-1.
\end{equation}
Similarly, we have
\begin{equation}\label{bp110}
  \mathcal{G}_{2,i,1}\left(\underline{V}_{2,i,1}, \psi_{1,i},\overline{V}_{1,i,1} \right)=\mathbf{0},\quad i=1,2,\ldots,N_{x}-1.
\end{equation}
In a similar manner, we can prove that
\begin{eqnarray}\label{bp111}
&&  \mathcal{G}_{1,i,1}\left(\underline{V}_{1,i,1}, \psi_{1,i}, \overline{V}_{2,i,1}\right)=\mathbf{0},\quad \mathcal{G}_{2,i,1}\left(\overline{V}_{2,i,1}, \psi_{1,i}, \underline{V}_{1,i,1}\right)=\mathbf{0},\nonumber  \\
&& i=1,2,\ldots,N_{x}-1.
\end{eqnarray}
From (\ref{bp109})--(\ref{bp111}), we conclude that $\overline{V}_{1,i,1}$,  $\underline{V}_{2,i,1}$ and
$\underline{V}_{1,i,1}$,  $\overline{V}_{2,i,1}$,  $i=0,1,\ldots, \\ N_{x}$,  solve
(\ref{bp18}).

By the assumption of the theorem that $\widetilde{U}_{\alpha,i,2}$, $ \widehat{U}_{\alpha,i,2} $ $i=0,1,\ldots,N_{x}$, $\alpha=1,2$,
are ordered  upper and lower  solutions and from (\ref{bp107}), it follows that $\widetilde{U}_{\alpha,i,2}$ and  $\widehat{U}_{\alpha,i,2}$,
 $i=0,1,\ldots,N_{x}$, $\alpha=1,2$, are upper and lower
solutions with respect to, respectively,  $\overline{V}_{\alpha,i,1}$ and
 $\underline{V}_{\alpha,i,1}$ $i=0,1,\ldots,N_{x}$, $\alpha=1,2$. Indeed from (\ref{bp94a}) and  (\ref{bp107}), we have
 \begin{align*}
&\mathcal{G}_{1,i,2}\left(\widetilde{U}_{1,i,2} ,\overline{V}_{1,i,1}, \widehat{U}_{2,i,2}\right )= \\&A_{1,i,2}\widetilde{U}_{1,i,2}-L_{1,i,2}\widetilde{U}_{1,i-1,2}
-R_{1,i,2}\widetilde{U}_{1,i+1,2}+F_{1,i,2}(\widetilde{U}_{1,i,2}, \widehat{U}_{2,i,2}) \\&-\tau^{-1}\overline{V}_{1,i,1} +G_{1,i,2}^{*}\geq \mathcal{G}_{1,i,2}\left(\widetilde{U}_{1,i,2} ,\widetilde{U}_{1,i,1}, \widehat{U}_{2,i,2}\right )\geq \mathbf{0},\\
&i=1,2\ldots,N_{x}-1,
\end{align*}
\begin{align*}
&\mathcal{G}_{2,i,2}\left(\widehat{U}_{2,i,2} ,\underline{V}_{2,i,1}, \widetilde{U}_{1,i,2}\right )= \\&A_{2,i,2}\widehat{U}_{2,i,2}-L_{2,i,2}\widehat{U}_{2,i-1,2}
-R_{2,i,2}\widehat{U}_{2,i+1,2}+F_{2,i,2}(\widehat{U}_{2,i,2}, \widetilde{U}_{1,i,2}) \\&-\tau^{-1}\underline{V}_{2,i,1} +G_{2,i,2}^{*}\leq \mathcal{G}_{2,i,2}\left(\widehat{U}_{2,i,2} ,\widehat{U}_{2,i,1}, \widetilde{U}_{1,i,2}\right )
\leq \mathbf{0},\\
&i=1,2\ldots,N_{x}-1.
\end{align*}
By a similar manner, from (\ref{bp95b}) and (\ref{bp107}),  we can prove that
\begin{align*}
&\mathcal{G}_{1,i,2}\left(\widehat{U}_{1,i,2} ,\underline{V}_{1,i,1}, \widetilde{U}_{2,i,2}\right )\leq\mathbf{0},\quad
\mathcal{G}_{2,i,2}\left(\widetilde{U}_{2,i,2} ,\overline{V}_{2,i,1}, \widehat{U}_{1,i,2}\right )
\geq \mathbf{0},\\
&i=1,2\ldots,N_{x}-1.
\end{align*}

Using a similar argument as in (\ref{bp107}), we can prove that the limits
\begin{equation*}
  \lim_{n\rightarrow \infty} \overline{U}^{(n)}_{\alpha,i,2}=\overline{V}_{\alpha,i,2},\quad  \lim_{n\rightarrow \infty} \underline{U}^{(n)}_{\alpha,i,2}=\underline{V}_{\alpha,i,2},\quad i=0,1,\ldots,N_{x},\quad \alpha=1,2,
\end{equation*}
 exist and solve (\ref{bp18}) on the second time level $m=2$.

By induction on $m$, $m\geq1$, we can prove that
\begin{eqnarray*}
&& \lim_{n\rightarrow \infty} \overline{U}^{(n)}_{\alpha,i,m}=\overline{V}_{\alpha,i,m} ,\quad
  \lim_{n\rightarrow \infty} \underline{U}^{(n)}_{\alpha,i,m}=\underline{V}_{\alpha,i,m},\quad i=0,1,\ldots,N_{x}.\\
 &&\alpha=1,2,\quad m\geq1.
\end{eqnarray*}
Thus, $(\overline{V}_{1,i,m}, \underline{V}_{2,i,m}) $ and $ (\underline{V}_{1,i,m}, \overline{V}_{2,i,m}) $, $i=0,1,\ldots,N_{x}  $, $m\geq1$,
are solutions of the nonlinear difference scheme (\ref{bp18}).
   \end{proof}
We now assume that the reaction functions $f_{\alpha}$, $\alpha=1,2$, satisfy (\ref{bp61}) and the two-sided constrains
\begin{equation}\label{bp112}
-q_{\alpha}(p,t_{m})\leq - \frac{\partial f_{\alpha}(p,t_{m},U)}{\partial u_{\alpha^{\prime}}}\leq 0,\quad U\in\langle \widehat{U}(t_{m}), \widetilde{U}(t_{m})\rangle, \quad p\in\overline{\Omega}^{h},
\end{equation}
\begin{equation*}
\alpha^{\prime}\neq \alpha,\quad \alpha, \alpha^{\prime}=1,2,
  \quad m\geq1,
\end{equation*}
where  $q_{\alpha}(p,t_{m})$, $\alpha=1,2$,
are  nonnegative  bounded functions. It is assumed that the time step $\tau$ satisfies the assumptions in  (\ref{bp62}).
\begin{theorem}\label{bp113}
 Suppose that functions $f_{\alpha}(p,t_{m},U)$, $\alpha=1,2$, satisfy (\ref{bp61})
 and (\ref{bp112}),
 where $\widetilde{U}(p,t_{m})$ and $\widehat{U}(p,t_{m})$ are ordered
 upper and lower solutions (\ref{bp40}) of (\ref{bp9}). Let assumption
  (\ref{bp62}) on time step $\tau$ be satisfied. Then the  nonlinear
 difference scheme (\ref{bp9}) has a unique solution.
\end{theorem}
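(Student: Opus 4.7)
The plan is to mimic the proof of Theorem \ref{bp63} but adapt it to the nonincreasing setting, where the two candidate solutions produced by Theorem \ref{bp106} are not $(\overline{V}_1,\overline{V}_2)$ and $(\underline{V}_1,\underline{V}_2)$ but rather the ``crossed'' pairs $(\overline{V}_1,\underline{V}_2)$ and $(\underline{V}_1,\overline{V}_2)$. Let $W_\alpha(p,t_m)=\overline{V}_\alpha(p,t_m)-\underline{V}_\alpha(p,t_m)$, which satisfies $W_\alpha\geq0$ on $\overline{\Omega}^h$ and $W_\alpha(p,t_m)=0$ on $\partial\Omega^h$; moreover $W_\alpha(p,0)=0$ since both pairs share the initial data $\psi$. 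Uniqueness will follow once I show $W_\alpha(p,t_m)\equiv 0$ on every time level.

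First I would write the two difference equations these pairs satisfy on $\Omega^h$: for $\alpha=1$, the identities $\mathcal{G}_{1}(\overline{V}_1,\overline{V}_1(t_{m-1}),\underline{V}_2)=0$ and $\mathcal{G}_{1}(\underline{V}_1,\underline{V}_1(t_{m-1}),\overline{V}_2)=0$, and analogously for $\alpha=2$. Subtracting them gives, for each $\alpha$,
\begin{equation*}
(\mathcal{L}_\alpha^h+\tau^{-1})W_\alpha+\big[f_\alpha(\overline{V}_\alpha,\underline{V}_{\alpha'})-f_\alpha(\underline{V}_\alpha,\overline{V}_{\alpha'})\big]-\tau^{-1}W_\alpha(t_{m-1})=0.
\end{equation*}
I would then split the bracketed difference via the intermediate value $f_\alpha(\underline{V}_\alpha,\underline{V}_{\alpha'})$ and apply the mean-value theorem (\ref{bp16}) to each piece. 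Because the second slot jumps \emph{down} from $\overline{V}_{\alpha'}$ to $\underline{V}_{\alpha'}$ (opposite to the first slot), the resulting equation takes the form
\begin{equation*}
\Big(\mathcal{L}_\alpha^h+\big(\tau^{-1}+\tfrac{\partial f_\alpha(H_\alpha,\underline{V}_{\alpha'})}{\partial u_\alpha}\big)\Big)W_\alpha=\tfrac{\partial f_\alpha(\underline{V}_\alpha,H_{\alpha'})}{\partial u_{\alpha'}}W_{\alpha'}+\tau^{-1}W_\alpha(t_{m-1}),
\end{equation*}
with $H_\alpha,H_{\alpha'}$ intermediate between the corresponding $\underline{V},\overline{V}$ values, and boundary data zero.

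The crucial observation is that in the \emph{nonincreasing} case (\ref{bp112}) we have $0\le\partial f_\alpha/\partial u_{\alpha'}\le q_\alpha$, so the coefficient multiplying $W_{\alpha'}$ on the right is nonnegative and bounded by $q_m$; combined with $W_{\alpha'}\geq0$ and $W_\alpha(t_{m-1})\geq 0$, this lets me apply the maximum-principle estimate (\ref{bp13}) to the operator on the left (which has nonnegative ``reaction-like'' coefficient bounded below by $\underline{c}_\alpha$). On the first time level $m=1$, using $W_\alpha(\cdot,0)=0$ and the same case split $\underline{c}_1\geq0$ vs.\ $\underline{c}_1<0$ as in Theorem \ref{bp63}, I obtain either
\begin{equation*}
W(t_1)\le\frac{\tau q_1}{1+\tau\underline{c}_1}W(t_1)\qquad\text{or}\qquad W(t_1)\le\frac{\tau q_1}{1-\tau|\underline{c}_1|}W(t_1),
\end{equation*}
where $W(t_m)=\max_{\alpha}\|W_\alpha(\cdot,t_m)\|_{\overline{\Omega}^h}$. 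In both cases the multiplier is strictly less than $1$ by the hypothesis $\tau<1/\beta_m$ in (\ref{bp62}), forcing $W(t_1)=0$. Induction on $m$, carrying $W(t_{m-1})=0$ into the same inequality with $m$ in place of $1$, yields $W(t_m)=0$ for all $m\geq1$, hence $\overline{V}_\alpha=\underline{V}_\alpha$, which is the claimed uniqueness.

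I do not anticipate a conceptual obstacle, since the structure parallels Theorem \ref{bp63} almost exactly; the only technical point that deserves care is the bookkeeping in the mean-value expansion, ensuring that the sign change in the second slot is tracked correctly so that the coefficient of $W_{\alpha'}$ on the right becomes $+\partial f_\alpha/\partial u_{\alpha'}$ rather than $-\partial f_\alpha/\partial u_{\alpha'}$. Because $|\partial f_\alpha/\partial u_{\alpha'}|\le q_\alpha$ by (\ref{bp112}) in either sign convention, the resulting scalar inequality and the role of $\beta_m$ in (\ref{bp62}) are identical, and the induction closes as before.
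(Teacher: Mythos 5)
Your proposal is correct and follows essentially the same route as the paper: the same crossed pairs $(\overline{V}_{1},\underline{V}_{2})$ and $(\underline{V}_{1},\overline{V}_{2})$ from Theorem \ref{bp106}, the same splitting of the reaction difference through $f_{\alpha}(\underline{V}_{\alpha},\underline{V}_{\alpha^{\prime}})$ leading to the paper's equation (\ref{bp115}), the same application of the estimate (\ref{bp13}) with the case split on the sign of $\underline{c}_{m}$, and the same induction on $m$. No gaps.
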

\begin{proof}
 To prove the uniqueness of a solution to the  nonlinear difference scheme (\ref{bp9}), it suffices to prove that
  \begin{equation*}
    \overline{V}_{\alpha}(p,t_{m})=\underline{V}_{\alpha}(p,t_{m}),\quad p\in \overline{\Omega}^{h},\quad \alpha=1,2,\quad m\geq1,
  \end{equation*}
where $(\overline{V}_{1}(p,t_{m}), \underline{V}_{2}(p,t_{m})) $ and $ (\underline{V}_{1}(p,t_{m}), \overline{V}_{2}(p,t_{m})) $,
$p\in \overline{\Omega}^{h}$, $m\geq1$, are the  solutions  to
  the  nonlinear difference scheme (\ref{bp9}), which are   defined in the proof of  Theorem \ref{bp106}. From (\ref{bp98}) and Theorem \ref{bp106}, we obtain
   \begin{eqnarray}\label{bp114}
 &&  \underline{U}_{\alpha}^{(n)}(p,t_{m})\leq \underline{V}_{\alpha}(p,t_{m})\leq \overline{V}_{\alpha}(p,t_{m})\leq \overline{U}^{(n)}_{\alpha}(p,t_{m}),\ \ p\in\overline{\Omega}^{h},\ \ \alpha=1,2,\nonumber\\
 &&m\geq1.
  \end{eqnarray}
  Letting $W_{\alpha}(p,t_{m})=\overline{V}_{\alpha}(p,t_{m})-\underline{V}_{\alpha}(p,t_{m})$, from (\ref{bp9}), we have
  \begin{eqnarray*}
&&\left(\mathcal{L}_{\alpha}^{h}(p,t_{m})+\tau^{-1}\right)W_{\alpha}(p,t_{m}) +\left[f_{\alpha}(\overline{V}_{\alpha},\underline{V}_{\alpha^{\prime}})- f_{\alpha}(\underline{V}_{\alpha}, \underline{V}_{\alpha^{\prime}})\right]\\
&&+
\left[f_{\alpha}(\underline{V}_{\alpha},\underline{V}_{\alpha^{\prime}})- f_{\alpha}(\underline{V}_{\alpha}, \overline{V}_{\alpha^{\prime}})\right]
-\tau^{-1}W_{\alpha}(p,t_{m-1})=0,\quad p\in \Omega^{h},\\
&& W_{\alpha}(p,t_{m})=0,\quad p\in \partial \Omega^{h},\quad W_{\alpha}(p,0)=0,\quad p\in \overline{\Omega}^{h},\quad \alpha^{\prime}\neq \alpha,\\
&& \alpha, \alpha^{\prime}=1,2,\quad m\geq1.
\end{eqnarray*}
Using the mean-value theorem (\ref{bp16}), we obtain
\begin{eqnarray}\label{bp115}
&&\left(\mathcal{L}_{\alpha}^{h}(p,t_{m})+\left(\tau^{-1}+\frac{\partial f_{\alpha}(p,t_{m},H_{\alpha})}{\partial u_{\alpha}}\right)\right)W_{\alpha}(p,t_{m})
=\\
&&\frac{\partial f_{\alpha}(p,t_{m},H_{\alpha^{\prime}})}{\partial u_{\alpha^{\prime}}}W_{\alpha^{\prime}}(p,t_{m})+\tau^{-1}W_{\alpha}(p,t_{m-1}),
\quad p\in \Omega^{h}, \nonumber\\
&& W_{\alpha}(p,t_{m})=0,\quad p\in \partial \Omega^{h},\quad m\geq1,\quad W_{\alpha}(p,0)=0,\quad p\in \overline{\Omega}^{h},\nonumber\\
&& \underline{V}_{\alpha}(p,t_{m})\leq H_{\alpha}(p,t_{m})\leq \overline{V}_{\alpha}(p,t_{m}),\quad \alpha^{\prime}\neq \alpha,\quad \alpha, \alpha^{\prime}=1,2.\nonumber
\end{eqnarray}
 From here and (\ref{bp114}), it follows that the partial derivatives satisfy (\ref{bp61}) and (\ref{bp112}).
 If $\underline{c}_{1}\geq0$ in (\ref{bp62}), from (\ref{bp115}) for $m=1$,  using (\ref{bp13}),  (\ref{bp61}), (\ref{bp112}) and
 taking into account that $W_{\alpha}(p,0)=0$, we conclude that
 \begin{equation*}
   W(t_{1})\leq\frac{\tau q_{1}}{1+\tau \underline{c}_{1}}W(t_{1}),
 \end{equation*}
where
\begin{eqnarray*}
&&  W(t_{m})=\max_{\alpha=1,2}W_{\alpha}(t_{m}),\quad W_{\alpha}(t_{m})=\|W_{\alpha}(\cdot,t_{m})\|_{\overline{\Omega}^{h}}, \\
&& \|W_{\alpha}(\cdot,t_{m})\|_{\overline{\Omega}^{h}}=\max_{p\in\Omega^{h}}|W_{\alpha}(p,t_{m})|,\quad \alpha=1,2.
\end{eqnarray*}
From here, by the assumption on $\tau$ in (\ref{bp62}) and taking into account that $W(t_{m})\geq0$, we conclude that $W(t_{1})=0$.

If $\underline{c}_{1}<0$ in (\ref{bp62}), from (\ref{bp115}) for $m=1$,  using (\ref{bp13}), (\ref{bp61}) and (\ref{bp112}),  we conclude that
 \begin{equation*}
   W(t_{1})\leq\frac{\tau q_{1}}{1-\tau | \underline{c}_{1}|}W(t_{1}).
 \end{equation*}
 From here, by the assumption on $\tau$ in (\ref{bp62}) and taking into account that $W(t_{m})\geq0$, we conclude that $W(t_{1})=0$.

By induction on $m$, we can prove that $W(t_{m})=0$, $m\geq1$.
Thus, we prove the theorem.
\end{proof}
\subsection{Convergence analysis}
For the sequences $\{\overline{U}^{(n)}_{1,i,m}, \underline{U}^{(n)}_{2,i,m}\}$
and $\{\underline{U}^{(n)}_{1,i,m}, \overline{U}^{(n)}_{2,i,m}\}$ generated by (\ref{bp94}), we introduce the notation
\begin{equation}\label{bp115a}
\mathcal{G}_{1}(t_{m})=\left\{ \begin{array}{ll}
\left\|\mathcal{G}_{1}\left(\overline{U}_{1}^{(n)}(\cdot,t_{m}), \overline{U}_{1}(\cdot,t_{m-1}), \underline{U}_{2}^{(n)}(\cdot,t_{m})\right)\right\|_{p\in \Omega^{h}},\ \
 \mbox{for}\  (\ref{bp94a}), \\ \\
\left\|\mathcal{G}_{1}\left(\underline{U}_{1}^{(n)}(\cdot,t_{m}), \underline{U}_{1}(\cdot,t_{m-1}), \overline{U}_{2}^{(n)}(\cdot,t_{m})\right)\right\|_{p\in \Omega^{h}},
\ \ \mbox{for}\  (\ref{bp95b}),
\end{array}\right.
 \end{equation}
 \begin{equation*}
\mathcal{G}_{2}(t_{m})=\left\{ \begin{array}{ll}
\left\|\mathcal{G}_{2}\left(\underline{U}_{2}^{(n)}(\cdot,t_{m}), \underline{U}_{2}(\cdot,t_{m-1}), \overline{U}_{1}^{(n)}(\cdot,t_{m})\right)\right\|_{p\in \Omega^{h}},
\ \ \mbox{for}\  (\ref{bp94a}), \\ \\
\left\|\mathcal{G}_{2}\left(\overline{U}_{2}^{(n)}(\cdot,t_{m}), \overline{U}_{2}(\cdot,t_{m-1}), \underline{U}_{1}^{(n)}(\cdot,t_{m})\right)\right\|_{p\in \Omega^{h}}, \ \ \mbox{for}\  (\ref{bp95b}),
\end{array}\right.
 \end{equation*}
 where the residuals   $ \mathcal{G}_{\alpha}\left(U_{\alpha}^{(n)}(p,t_{m}), U_{\alpha}(p,t_{m-1}), U_{\alpha^{\prime}}^{(n)}(p,t_{m})\right) $, $\alpha^{\prime}\neq \alpha $,
$\alpha, \alpha^{\prime}\\=1,2$, are
defined in (\ref{bp18a}),  the notation  of the norm  from (\ref{bp13}) is in use.

A stopping test for the block monotone iterative methods (\ref{bp94}) is chosen in the following  form
\begin{equation}\label{bp116}
  \max_{m\geq1}\left[\mathcal{G}_{1}(t_{m}), \mathcal{G}_{2}(t_{m})\right]\leq \delta,
\end{equation}
where  $\mathcal{G}_{\alpha}(t_{m}) $, $\alpha=1,2$, are defined in (\ref{bp115a}),  $\delta$ is a
prescribed accuracy. On each time level $t_{m}$,  $m\geq1$,
we set up $U_{\alpha}(p,t_{m})=U^{(n_{m})}_{\alpha}(p,t_{m})$, $ p \in \Omega^{h}$,  $\alpha=1,2$,
 such that $m_{n}$
is the minimal number of iterations  subject to (\ref{bp116}).
\begin{theorem}\label{bp117}
Let $\widetilde{U}(p,t_{m})$ and $\widehat{U}(p,t_{m})$ be ordered
 upper and lower solutions (\ref{bp40}) of (\ref{bp9}). Suppose that functions $f_{\alpha}(p,t_{m},U)$, $\alpha=1,2$, satisfy (\ref{bp65a})
 and (\ref{bp112}).
  Then for the sequences  of solutions $\{\overline{U}_{1}^{(n)}(p,t_{m}),\underline{U}^{(n)}_{2}(p,t_{m}) \}$ and
 $\{\underline{U}_{1}^{(n)}(p,t_{m}),\overline{U}^{(n)}_{2}(p,t_{m}) \}$   generated
 by (\ref{bp94}), (\ref{bp116}), the following estimate holds
 \begin{equation}\label{bp118}
\max_{m\geq1} \max_{\alpha=1,2}\|U_{\alpha}(\cdot, t_{m})-U_{\alpha}^{*}(\cdot, t_{m})\|_{\overline{\Omega}^{h}}\leq T \delta.
  \end{equation}
where $U_{\alpha}(p,t_{m})=U_{\alpha}^{(n_{m})}(p,t_{m})$, $n_{m}$ is a minimal
  number of iterations subject to (\ref{bp116}), and   $U_{\alpha}^{*}(p,t_{m})$, $\alpha=1,2$, $m\geq1$, are the unique solutions to the  nonlinear difference scheme (\ref{bp9}).
  \end{theorem}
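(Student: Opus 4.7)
The plan is to mirror the proof of Theorem \ref{bp66} (the nondecreasing analogue), with the two components coupled through the ``swapped'' residuals (\ref{bp94a})--(\ref{bp95b}) appropriate to the nonincreasing scheme. I treat the sequence $\{\overline{U}_1^{(n)},\underline{U}_2^{(n)}\}$; the other sequence $\{\underline{U}_1^{(n)},\overline{U}_2^{(n)}\}$ is handled identically using the second line of (\ref{bp115a}). Set $U_\alpha(p,t_m)=U_\alpha^{(n_m)}(p,t_m)$, and let $W_1=\overline{U}_1-U_1^*$, $W_2=\underline{U}_2-U_2^*$. Subtracting (\ref{bp9}) evaluated at the exact solution $U^*$ from the residual identity for the iterate gives, for $\alpha=1$,
\begin{equation*}
(\mathcal{L}_1^h+\tau^{-1})W_1+\bigl[f_1(\overline{U}_1,\underline{U}_2)-f_1(U_1^*,U_2^*)\bigr]-\tau^{-1}W_1(\cdot,t_{m-1})=\mathcal{G}_1(\cdot,t_m),
\end{equation*}
with homogeneous boundary/initial data, and an analogous identity (with $\mathcal{G}_2$) for $W_2$. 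By the stopping test (\ref{bp116}), $\|\mathcal{G}_\alpha(\cdot,t_m)\|_{\Omega^h}\leq\delta$ for both $\alpha$.

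Next, I apply the mean-value theorem (\ref{bp16}) twice to each $f_\alpha$ difference, splitting it into a sum of a term proportional to $W_\alpha$ and one proportional to $W_{\alpha'}$, with partial derivatives evaluated at intermediate points. Moving the self-coupling $(\partial f_\alpha/\partial u_\alpha)W_\alpha$ to the left-hand side turns the principal operator into $\mathcal{L}_\alpha^h+(\tau^{-1}+\partial f_\alpha/\partial u_\alpha)I$. Because Theorem \ref{bp106} places both iterates and $U^*$ in $\langle\widehat{U},\widetilde{U}\rangle$, assumptions (\ref{bp65a}) and (\ref{bp112}) give the pointwise bounds $\partial f_\alpha/\partial u_\alpha\geq q\geq0$ and $|\partial f_\alpha/\partial u_{\alpha'}|\leq q$. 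Invoking Lemma \ref{bp12}(ii) with $k_\alpha=\partial f_\alpha/\partial u_\alpha$ and using the pointwise inequality $1/(k_\alpha+\tau^{-1})\leq 1/(q+\tau^{-1})$ to replace the variable coefficient by its uniform lower bound, I obtain
\begin{equation*}
\|W_\alpha(\cdot,t_m)\|_{\overline{\Omega}^h}\leq \frac{1}{\tau^{-1}+q}\Bigl(q\,\|W_{\alpha'}(\cdot,t_m)\|_{\overline{\Omega}^h}+\delta+\tau^{-1}\|W_\alpha(\cdot,t_{m-1})\|_{\overline{\Omega}^h}\Bigr),
\end{equation*}
for $\alpha=1,2$.

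Setting $W_m=\max_{\alpha=1,2}\|W_\alpha(\cdot,t_m)\|_{\overline{\Omega}^h}$ and taking the maximum of the two inequalities above yields $W_m\leq (qW_m+\delta+\tau^{-1}W_{m-1})/(\tau^{-1}+q)$. Since $1-q/(\tau^{-1}+q)=\tau^{-1}/(\tau^{-1}+q)>0$, this rearranges to the scalar recurrence $W_m\leq \tau\delta+W_{m-1}$. Using $W_0=0$ from the exact initial condition, induction on $m$ gives $W_m\leq \delta\sum_{\rho=1}^m\tau\leq T\delta$, which is (\ref{bp118}). The companion sequence is handled by the same three steps, replacing $\mathcal{G}_1,\mathcal{G}_2$ in (\ref{bp115a}) by their alternative forms.

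The main obstacle is purely bookkeeping: one must verify that, even though $W_2\leq 0$ while $W_1\geq 0$ (reflecting that $\overline{U}_1$ is an upper and $\underline{U}_2$ a lower approximation), the sign structure of the cross derivatives under (\ref{bp112}) is exactly what is needed so that after taking absolute values the estimate closes with the same factor $q$. This works because the mean-value splitting and the maximum principle treat the off-diagonal coupling through its absolute value, so the nonincreasing hypothesis yields the identical scalar inequality as in the nondecreasing case of Theorem \ref{bp66}. No extra smallness condition on $\tau$ beyond what is built into (\ref{bp65a}) enters the argument.
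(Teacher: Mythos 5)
Your proposal is correct and follows essentially the same route as the paper's proof: subtract the exact-solution equation from the residual identity for the pair $\{\overline{U}_1,\underline{U}_2\}$, apply the mean-value theorem, absorb the self-coupling into the operator, invoke Lemma \ref{bp12}(ii) with (\ref{bp65a}) and (\ref{bp112}) to get the coupled norm inequalities, and close the recurrence $W_m\leq\tau\delta+W_{m-1}$ by induction. The only deviation is your sign convention for $W_2$ (the paper takes $U_2^*-\underline{U}_2$ so both differences are nonnegative), which is immaterial since the estimate proceeds entirely through norms.
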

  \begin{proof}
    We consider the case of  the sequence $\{\overline{U}_{1}^{(n)}(p,t_{m}),\underline{U}^{(n)}_{2}(p,t_{m}) \}$. On a time level $t_{m}$, $m\geq1$, from  (\ref{bp9})
for $\overline{U}_{1}(p,t_{m})$, $\underline{U}_{2}(p,t_{m})$  and $U^{*}_{\alpha}(p,t_{m})$, $\alpha=1,2$ , we have
\begin{eqnarray*}
&&\left(\mathcal{L}_{1}^{h}(p,t_{m})+\tau^{-1}\right)\overline{U}_{1}(p,t_{m}) +f_{1}(p,t_{m},\overline{U}_{1}, \underline{U}_{2})-\tau^{-1} \overline{U}_{1}(p,t_{m-1})=\\
&& \mathcal{G}_{1}\left(\overline{U}_{1}(p,t_{m}),\overline{U}_{1}(p,t_{m-1}), \underline{U}_{2}(p,t_{m})\right),\quad  p\in \Omega^{h}, \\
&& \overline{U}_{1}(p,t_{m})=g_{1}(p,t_{m}),\ \  p\in \partial \Omega^{h},\quad m\geq1, \ \  \overline{U}_{1}(p,0)=\psi_{1}(p),\ \ p\in\overline{\Omega}^{h},\nonumber
\end{eqnarray*}
\begin{eqnarray*}
&&\left(\mathcal{L}_{2}^{h}(p,t_{m})+\tau^{-1}\right)\underline{U}_{2}(p,t_{m}) +f_{2}(p,t_{m},\underline{U}_{2}, \overline{U}_{1})-\tau^{-1} \underline{U}_{2}(p,t_{m-1})=\\
&& \mathcal{G}_{2}\left(\underline{U}_{2}(p,t_{m}),\underline{U}_{2}(p,t_{m-1}), \overline{U}_{1}(p,t_{m})\right),\quad  p\in \Omega^{h}, \\
&& \underline{U}_{2}(p,t_{m})=g_{2}(p,t_{m}),\ \ p\in \partial \Omega^{h},\ \  m\geq1,\ \ \underline{U}_{2}(p,0)=\psi_{2}(p),\ \ p\in\overline{\Omega}^{h},\nonumber
\end{eqnarray*}
\begin{eqnarray*}
&&\left(\mathcal{L}_{\alpha}^{h}(p,t_{m})+\tau^{-1}\right)U^{*}_{\alpha}(p,t_{m}) +f_{\alpha}(p,t_{m},U^{*})-\tau^{-1} U_{\alpha}^{*}(p,t_{m-1})=0,\\
&&p\in \Omega^{h},\quad \alpha=1,2,\quad U^{*}(p,t_{m})=g(p,t_{m}),\quad p\in \partial \Omega^{h},\\
&& U^{*}(p,0)=\psi(p),\quad p\in\overline{\Omega}^{h},\quad m\geq1.\nonumber
\end{eqnarray*}
Letting  $\overline{W}_{1}(p,t_{m})=\overline{U}_{1}(p,t_{m})-U_{1}^{*}(p,t_{m})$ and
$\underline{W}_{2}(p,t_{m})=U_{2}^{*}(p,t_{m})-\underline{U}_{2}(p,t_{m})$, $p\in \overline{\Omega}^{h}$,
 $m\geq1$, from here and using the mean-value theorem, we obtain
\begin{eqnarray*}
&&\left(\mathcal{L}_{1}^{h}(p,t_{m})+\left(\tau^{-1}+\frac{\partial f_{1}(p,t_{m},\overline{K}_{1}, \underline{U}_{2})}{\partial u_{1}}\right)I\right)\overline{W}_{1}(p,t_{m})=
 \\
&&+\frac{\partial f_{1}(p,t_{m}, U^{*}_{1},\underline{K}_{2})}{\partial u_{2}}\underline{W}_{2}(p,t_{m})+\tau^{-1} \overline{W}_{1}(p,t_{m-1}) \\
&&+\mathcal{G}_{1}\left(\overline{U}_{1}(p,t_{m}),\overline{U}_{1}(p,t_{m-1}), \underline{U}_{2}(p,t_{m})\right) ,\quad p\in \Omega^{h},\\
&&\overline{W}_{1}(p,t_{m})=0,\quad p\in \partial \Omega^{h},\quad W_{1}(p,0)=0,\quad p\in\overline{\Omega}^{h},\quad  m\geq1,
\end{eqnarray*}
\begin{eqnarray*}
&&\left(\mathcal{L}_{2}^{h}(p,t_{m})+\left(\tau^{-1}+\frac{\partial f_{2}(p,t_{m},\underline{K}_{2}, \overline{U}_{1})}{\partial u_{1}}\right)I\right)\underline{W}_{2}(p,t_{m})=
 \\
&&+\frac{\partial f_{2}(p,t_{m}, U^{*}_{1},\overline{K}_{1})}{\partial u_{2}}\overline{W}_{1}(p,t_{m})+\tau^{-1} \underline{W}_{2}(p,t_{m-1}) \\
&&+\mathcal{G}_{2}\left(\underline{U}_{2}(p,t_{m}),\underline{U}_{2}(p,t_{m-1}), \overline{U}_{1}(p,t_{m})\right) ,\quad p\in \Omega^{h},\\
&&\underline{W}_{2}(p,t_{m})=0,\quad p\in \partial \Omega^{h},\quad \underline{W}_{2}(p,0)=0,\quad p\in\overline{\Omega}^{h},\quad  m\geq1,
\end{eqnarray*}
where
\begin{eqnarray*}
&& U_{1}^{*}(p,t_{m})\leq   \overline{K}_{1}(p,t_{m})\leq  \overline{U}_{1}(p,t_{m}),\\
 &&\underline{U}_{2}(p,t_{m})\leq   \underline{K}_{2}(p,t_{m})\leq  U^{*}_{2}(p,t_{m}),\quad
 m\geq1.
\end{eqnarray*}
From here, (\ref{bp65a}) and  (\ref{bp112}), by using (\ref{bp13}), we obtain that
\begin{eqnarray}\label{bp119a}
&&\|\overline{W}_{1}(\cdot,t_{m})\|_{\overline{\Omega}^{h}} \leq\frac{1}{\tau^{-1}+q} \left( q \| \underline{W}_{2}(\cdot, t_{m})\|_{\Omega^{h}}+ \delta +\tau^{-1} \| \overline{W}_{1}(\cdot, t_{m-1})\|_{\Omega^{h}}   
\right),\nonumber\\
&&\|\underline{W}_{2}(\cdot, t_{m})\|_{\overline{\Omega}^{h}} \leq\frac{1}{\tau^{-1}+q} \left( q \| \overline{W}_{1}(\cdot, t_{m})\|_{\Omega^{h}}+ \delta +\tau^{-1}\| \underline{W}_{2}(\cdot, t_{m-1})\|_{\Omega^{h}}\right),\nonumber\\
\end{eqnarray}
where the notation of the norm from (\ref{bp13}) is in use.
Letting $W(t_{m})=\max \left\{\|\overline{W}_{1}(\cdot,t_{m})\|_{\overline{\Omega}^{h}}, \|\underline{W}_{2}(\cdot, t_{m})\|_{\overline{\Omega}^{h}}\right\}$, from (\ref{bp119a}), we have
\begin{equation*}
W(t_{m})\leq\frac{1}{\tau^{-1}+q} \left( q W(t_{m})+ \delta +\tau^{-1} W(t_{m-1})  
\right),
\end{equation*}
Taking into account that
\begin{equation*}
1-  \frac{q}{\tau^{-1}+q}>0,
\end{equation*}
it follows that
\begin{equation*}
W(t_{m})\leq \tau \delta + W(t_{m-1}).   
\end{equation*}
From here, taking into account that $W(t_{0})=0$, by induction on $m$,  we obtain that
\begin{equation*}
W(t_{m}) \leq \delta \sum_{\rho=1}^{m} \tau \leq \delta T.
\end{equation*}
Thus, we conclude that
\begin{equation*}
\|\overline{W}_{1}(\cdot,t_{m})\|_{\overline{\Omega}^{h}}   \leq \delta T,\quad  \|\underline{W}_{2}(\cdot, t_{m})\|_{\overline{\Omega}^{h}}\leq \delta T.
\end{equation*}
By a similar argument, for the sequence  $\{\underline{U}_{1}^{(n)}(p,t_{m}),\overline{U}^{(n)}_{2}(p,t_{m}) \}$,  we can prove that
\begin{equation*}
\|\underline{W}_{1}(\cdot,t_{m})\|_{\overline{\Omega}^{h}}   \leq \delta T,\quad  \|\overline{W}_{2}(\cdot, t_{m})\|_{\overline{\Omega}^{h}}\leq \delta T.
\end{equation*}
Thus, we prove the theorem.
 \end{proof}
 \begin{theorem}\label{bp191b}
  Let the assumptions in Theorem \ref{bp117} be satisfied.  Then for the sequences   $\{\overline{U}_{1}^{(n)}(p,t_{m}),\underline{U}^{(n)}_{2}(p,t_{m}) \}$ and
 $\{\underline{U}_{1}^{(n)}(p,t_{m}),\overline{U}^{(n)}_{2}(p,t_{m}) \}$   generated
 by (\ref{bp94}), (\ref{bp116}),  the following estimate holds
\begin{eqnarray}\label{bp119b1}
&& \max_{m\geq1}  \max_{\alpha=1,2}\|U_{\alpha}(\cdot, t_{m})-u_{\alpha}^{*}(\cdot, t_{m})\|_{\overline{\Omega}^{h}}\leq T\left(\delta +\max_{m\geq1} E_{m}\right), \\
&& E_{m}=\max_{\alpha=1,2}\|E_{\alpha}(\cdot,t_{m})\|_{\overline{\Omega}^{h}},\quad m\geq1, \nonumber
\end{eqnarray}
where $U_{\alpha}(p,t_{m})=U_{\alpha}^{(n_{m})}(p,t_{m})$,
$\alpha=1,2$, $m\geq1$, $n_{m}$ is the minimal number of iterations subject to the stopping test (\ref{bp116}),    $u_{\alpha}^{*}(x,y,t)$, $\alpha=1,2$, are the exact solutions to (\ref{bp1}), and   $E_{\alpha}(p,t_{m})$,
$\alpha=1,2$, $m\geq1$,
are the truncation errors of the exact solutions  $u_{\alpha}^{*}(x,y,t)$, $\alpha=1,2$,
 on the nonlinear
difference scheme (\ref{bp9}).
\end{theorem}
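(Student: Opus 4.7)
The plan is to mirror the argument of Theorem \ref{bp68}: split the global error through the unique difference solution $U^*$ guaranteed by Theorem \ref{bp113} and control the two pieces separately. Specifically, I will write
\begin{equation*}
  U_\alpha(p,t_m)-u^*_\alpha(p,t_m)=\bigl(U_\alpha(p,t_m)-U^*_\alpha(p,t_m)\bigr)+\bigl(U^*_\alpha(p,t_m)-u^*_\alpha(p,t_m)\bigr),
\end{equation*}
apply the triangle inequality in the discrete maximum norm, and then estimate each summand. The first term is already bounded by $T\delta$ by Theorem \ref{bp117}, so the whole task reduces to estimating $V_\alpha(p,t_m)=u^*_\alpha(p,t_m)-U^*_\alpha(p,t_m)$ in terms of the truncation error $E_\alpha(p,t_m)$.

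To estimate $V_\alpha$, I would subtract the difference scheme (\ref{bp9}) for $U^*_\alpha$ from the consistency identity satisfied by $u^*_\alpha$ (which, by definition of the truncation error, produces $E_\alpha(p,t_m)$ on the right hand side), and then apply the mean-value theorem (\ref{bp16}) to linearize the nonlinear terms $f_\alpha(p,t_m,u^*)-f_\alpha(p,t_m,U^*)$. This yields, in analogy to (\ref{bp64a}),
\begin{align*}
&\left(\mathcal{L}_\alpha^h(p,t_m)+\left(\tau^{-1}+\frac{\partial f_\alpha(p,t_m,Y)}{\partial u_\alpha}\right)I\right)V_\alpha(p,t_m) \\
&\quad +\frac{\partial f_\alpha(p,t_m,Y)}{\partial u_{\alpha'}}V_{\alpha'}(p,t_m)-\tau^{-1}V_\alpha(p,t_m)=E_\alpha(p,t_m),
\end{align*}
with $V_\alpha$ vanishing on $\partial\Omega^h$ and at $t_0$, and with the partial derivatives satisfying (\ref{bp65a}) and (\ref{bp112}) because $Y_\alpha$ lies between $u^*_\alpha$ and $U^*_\alpha$, both of which lie in the sector $\langle\widehat{U},\widetilde{U}\rangle$.

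Applying the maximum principle estimate (\ref{bp13}) together with (\ref{bp65a}) and (\ref{bp112}) as in Theorem \ref{bp68} gives
\begin{equation*}
  \|V_\alpha(\cdot,t_m)\|_{\overline{\Omega}^h}\le\frac{1}{\tau^{-1}+q}\bigl(q\,\|V_{\alpha'}(\cdot,t_m)\|_{\Omega^h}+\tau^{-1}\|V_\alpha(\cdot,t_{m-1})\|_{\Omega^h}+\|E_\alpha(\cdot,t_m)\|_{\Omega^h}\bigr).
\end{equation*}
Setting $V_m=\max_{\alpha=1,2}\|V_\alpha(\cdot,t_m)\|_{\overline{\Omega}^h}$ and using $1-q/(\tau^{-1}+q)>0$, I obtain the one-step recursion $V_m\le V_{m-1}+\tau E_m$. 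Induction on $m$ with $V_0=0$ then gives $V_m\le\tau\sum_{\rho=1}^m E_\rho\le T\max_{\rho\ge1}E_\rho$, and combining this bound with Theorem \ref{bp117} via the triangle inequality yields (\ref{bp119b1}) for both sequences $\{\overline{U}_1^{(n)},\underline{U}_2^{(n)}\}$ and $\{\underline{U}_1^{(n)},\overline{U}_2^{(n)}\}$.

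The main subtle point, and the only real obstacle, will be the sign check in the linearized problem for $V_\alpha$: in the quasi-monotone nonincreasing case $\partial f_\alpha/\partial u_{\alpha'}\le 0$, so the coupling term between $V_1$ and $V_2$ enters with a different sign than in Theorem \ref{bp68}. I will need to take absolute values in the maximum-principle estimate (bounding $|{-}\partial f_\alpha/\partial u_{\alpha'}|$ by $q$ via (\ref{bp112})) so that the coupled scalar inequalities still close up into the single recursion $V_m\le V_{m-1}+\tau E_m$. Once that is arranged the rest is routine induction on $m$, exactly parallel to Theorems \ref{bp68} and \ref{bp117}.
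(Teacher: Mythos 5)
Your proposal is correct and follows essentially the same route as the paper: the same decomposition through the unique discrete solution $U^{*}_{\alpha}$, the same mean-value linearization of $u^{*}_{\alpha}-U^{*}_{\alpha}$ with truncation error $E_{\alpha}$ on the right, the same application of (\ref{bp13}) with (\ref{bp65a}) and (\ref{bp112}) to reach the recursion $V_{m}\leq V_{m-1}+\tau E_{m}$, and the same triangle-inequality combination with Theorem \ref{bp117}. The only blemish is the typo $-\tau^{-1}V_{\alpha}(p,t_{m})$ in your displayed linearized equation, which should read $-\tau^{-1}V_{\alpha}(p,t_{m-1})$, as your subsequent estimate already assumes.
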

\begin{proof}
We denote $V(p,t_{m})=u^{*}(p,t_{m})- U^{*}(p,t_{m})
$, where the mesh vector function  $U^{*}(p,t_{m})$ is the unique solution of the nonlinear difference scheme (\ref{bp9}).
From (\ref{bp9}), by using the mean-value theorem, we obtain that
\begin{eqnarray*}
&&\left(\mathcal{L}_{\alpha}^{h}(p,t_{m})+\left(\tau^{-1}+\frac{\partial f_{\alpha}(p,t_{m},Y)}{\partial u_{\alpha}}\right)I\right)V_{\alpha}(p,t_{m})
-\tau^{-1} V_{\alpha}(p,t_{m-1}) \\
&&+\frac{\partial f_{\alpha}(p,t_{m},Y)}{\partial u_{\alpha^{\prime}}}V_{\alpha^{\prime}}(p,t_{m})=E_{\alpha}(p,t_{m}),\quad p\in \Omega^{h},\quad \alpha^{\prime}\neq \alpha,  \nonumber\\
&&\alpha, \alpha^{\prime}=1,2,\quad  V(p,t_{m})=0,\quad p\in \partial \Omega^{h},\quad V(p,0)=0,\quad p\in\overline{\Omega}^{h},\\
&& m\geq1,\nonumber
\end{eqnarray*}
where $Y_{\alpha}(p,t_{m})$, $\alpha=1,2$ lie between $u^{*}_{\alpha}(p,t_{m}) $ and $U^{*}_{\alpha}(p,t_{m})$, $\alpha=1,2$.
From here, (\ref{bp65a})
 and (\ref{bp112}), by using notation (\ref{bp13}), it follows that
\begin{eqnarray*}
&&\|V_{\alpha}(\cdot, t_{m})\|_{\overline{\Omega}^{h}}\leq \\
&&\frac{1}{\tau^{-1}+q}\left(q \|V_{\alpha^{\prime}}(\cdot,t_{m})\|_{\Omega^{h}}
+ \tau^{-1}\|V_{\alpha}(\cdot,t_{m-1})\|_{\Omega^{h}}+\|E_{\alpha}(\cdot,t_{m})\|_{\Omega^{h}} \right).
\end{eqnarray*}
Letting $V_{m}=\max_{\alpha=1,2}\|V_{\alpha}(\cdot, t_{m})\|_{\overline{\Omega}^{h}}$, $m\geq1$, we have
\begin{equation*}
V_{m}\leq
\frac{1}{\tau^{-1}+q}\left(q V_{m}
+ \tau^{-1}V_{m-1}+E_{m} \right).
\end{equation*}
From here and  taking into account that
\begin{equation*}
  1-\frac{q}{\tau^{-1}+q}>0,
\end{equation*}
we conclude
\begin{equation}\label{bp119b2}
  V_{m}\leq V_{m-1}+ \tau E_{m} .
\end{equation}
Since $V_{0}=0$, for $m=1$ in (\ref{bp119b2}), we have
\begin{equation*}
  V_{1}\leq  \tau E_{1} .
\end{equation*}
For $m=2$ in (\ref{bp119b2}), we obtain
\begin{equation*}
  V_{2}\leq  \tau ( E_{1} + E_{2}),
\end{equation*}
and by induction on $m$, we can prove that
\begin{equation*}
  V_{m}\leq  \tau \sum_{\rho=1}^{m}E_{\rho}=\left(\sum_{\rho=1}^{m}\tau\right) \max_{\rho\geq1}E_{\rho},\quad m\geq1.
\end{equation*}
Since $\sum_{\rho=1}^{m}\tau\leq T$, where $T$ is the final time, we have
\begin{equation}\label{bp119b3}
  V_{m}\leq  T \max_{\rho\geq1}E_{\rho}.
\end{equation}
We estimate the left hand side in (\ref{bp119b1}) as follows
\begin{eqnarray*}
 \|U_{\alpha}^{( n_{m})}(\cdot,t_{m})\pm U_{\alpha}^{*}(\cdot, t_{m}) -u_{\alpha}^{*}( \cdot, t_{m})\|_{\overline{\Omega}^{h}} &\leq& \|U_{\alpha}^{(n_{m})}(\cdot, t_{m})-U_{\alpha}^{*}(\cdot, t_{m})\|_{\overline{\Omega}^{h}} \\
&&+\|U_{\alpha}^{*}(\cdot, t_{m})-u_{\alpha}^{*}(\cdot, t_{m})\|_{\overline{\Omega}^{h}},
\end{eqnarray*}
where $U_{\alpha}^{*}(p,t_{m})$, $\alpha=1,2$, are the exact solutions of (\ref{bp9}).
From here and  (\ref{bp119b3}), we prove (\ref{bp119b1}).
\end{proof}
  \subsection{Construction of upper and lower solutions}
To start  the monotone iterative methods (\ref{bp94}), on each time level $t_{m}$, $m\geq1$,  initial iterations are needed.
In this section, we discuss the construction of initial iterations   $\widetilde{U}_{\alpha}(p, t_{m})$ and $\widehat{U}_{\alpha}(p, t_{m})$, $\alpha=1,2$.
\subsubsection{Bounded $f_{u}$}
Assume that  the functions $f_{\alpha}$, $g_{\alpha}$ and $\psi_{\alpha}$, $\alpha=1,2$, in (\ref{bp1}) satisfy the conditions
\begin{align}\label{bp119}
&f_{\alpha}(x,y,t,0_{\alpha}, u_{\alpha^{\prime}})\leq 0,\quad  f_{\alpha}(x,y,t,u_{\alpha}, 0_{\alpha^{\prime}})\geq -M_{\alpha},\quad u_{\alpha}(x,y,t)\geq0, \nonumber   \\
&(x,y,t)\in \overline{Q}_{T},\quad g_{\alpha}(x,y,t)\geq0,\quad (x,y,t) \partial Q_{T},\quad \psi_{\alpha}(x,y)\geq0,\nonumber \\
&(x,y)\in \overline{\omega},\quad\alpha=1,2,
\end{align}
where $ M_{\alpha}$, $\alpha=1,2$, are positive  constants and $0_{\alpha}$ means $u_{\alpha}(x,y,t)=0$. 
 We introduce the functions
 \begin{equation}\label{bp120}
 \widehat{U}_{\alpha}(p,t_{m})=
 \left\{ \begin{array}{ll}
\psi_{\alpha}(p), \quad m=0,\\
0,\quad\quad\quad m\geq1,
\end{array}\right.
 p\in\overline{\Omega}^{h},\quad \alpha=1,2,
 \end{equation}
 and the linear problems
\begin{align} \label{bp121}
&\left(\mathcal{L}_{\alpha}^{h}(p,t_{m})+\tau^{-1}\right)\widetilde{U}_{\alpha}(p,t_{m})  =\tau^{-1}\widetilde{U}_{\alpha}(p,t_{m-1})+M_{\alpha},\quad p\in \Omega^{h}, \\
& \widetilde{U}_{\alpha}(p,t_{m})=g_{\alpha}(p,t_{m}),\quad p\in \partial \Omega^{h},\quad \widetilde{U}_{\alpha}(p,0)= \psi_{\alpha}(p),\quad p\in \overline{\Omega}^{h}, \nonumber \\
& \alpha=1,2,\quad m\geq1. \nonumber
\end{align}
\begin{theorem}\label{bp122}
  Let the  assumptions in (\ref{bp119}) be satisfied. Then $\widehat{U}_{\alpha}$, $\alpha=1,2$, from  (\ref{bp120})
  and solutions  $\widetilde{U}_{\alpha}$, $\alpha=1,2$, of the linear problems  (\ref{bp121}) are ordered lower and upper
  solutions   (\ref{bp40}) to (\ref{bp9}).
\end{theorem}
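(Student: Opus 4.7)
The plan is to verify all parts of the definition (\ref{bp40}) of ordered upper and lower solutions for the quasi-monotone nonincreasing case, using the linear problems (\ref{bp121}) defining $\widetilde{U}_{\alpha}$, the explicit form (\ref{bp120}) of $\widehat{U}_{\alpha}$, and the sign conditions in (\ref{bp119}). The boundary and initial conditions (\ref{bp40c}) are immediate: on $\partial\Omega^{h}$ we have $\widehat{U}_{\alpha}=0\leq g_{\alpha}=\widetilde{U}_{\alpha}$ by (\ref{bp119}) and (\ref{bp121}), and at $m=0$ both $\widehat{U}_{\alpha}$ and $\widetilde{U}_{\alpha}$ equal $\psi_{\alpha}$, so the initial ordering is trivially satisfied.

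First I would establish the ordering (\ref{bp40a}), i.e.\ $\widetilde{U}_{\alpha}(p,t_{m})\geq0$ for $p\in\overline{\Omega}^{h}$, $m\geq1$, since $\widehat{U}_{\alpha}(p,t_{m})=0$ for $m\geq1$. This is done by induction on $m$ using the maximum principle in Lemma \ref{bp12}: on level $m=1$ the right-hand side of (\ref{bp121}) is $\tau^{-1}\psi_{\alpha}+M_{\alpha}\geq0$ with boundary data $g_{\alpha}\geq0$, so part (i) of Lemma \ref{bp12} gives $\widetilde{U}_{\alpha}(p,t_{1})\geq0$; the induction step repeats this with $\widetilde{U}_{\alpha}(p,t_{m-1})\geq0$ in place of $\psi_{\alpha}$. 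Combined with the trivial inequality $\psi_{\alpha}\geq0$ at $m=0$, this yields (\ref{bp40a}).

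Next I would verify the two differential inequalities in (\ref{bp40b}). For the lower solution, at any $m\geq1$ the function $\widehat{U}_{\alpha}(\cdot,t_{m})\equiv0$ satisfies
\begin{equation*}
\bigl(\mathcal{L}_{\alpha}^{h}(p,t_{m})+\tau^{-1}\bigr)\widehat{U}_{\alpha}(p,t_{m})+f_{\alpha}\bigl(p,t_{m},\widehat{U}_{\alpha},\widetilde{U}_{\alpha^{\prime}}\bigr)-\tau^{-1}\widehat{U}_{\alpha}(p,t_{m-1})=f_{\alpha}(p,t_{m},0_{\alpha},\widetilde{U}_{\alpha^{\prime}})-\tau^{-1}\widehat{U}_{\alpha}(p,t_{m-1}),
\end{equation*}
which is $\leq 0$ because $f_{\alpha}(p,t_{m},0_{\alpha},\widetilde{U}_{\alpha^{\prime}})\leq0$ by (\ref{bp119}) and $\widehat{U}_{\alpha}(p,t_{m-1})\geq0$. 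For the upper solution, substituting (\ref{bp121}) into the residual gives
\begin{equation*}
\bigl(\mathcal{L}_{\alpha}^{h}(p,t_{m})+\tau^{-1}\bigr)\widetilde{U}_{\alpha}(p,t_{m})+f_{\alpha}\bigl(p,t_{m},\widetilde{U}_{\alpha},\widehat{U}_{\alpha^{\prime}}\bigr)-\tau^{-1}\widetilde{U}_{\alpha}(p,t_{m-1})=M_{\alpha}+f_{\alpha}(p,t_{m},\widetilde{U}_{\alpha},0_{\alpha^{\prime}}),
\end{equation*}
since $\widehat{U}_{\alpha^{\prime}}(p,t_{m})=0$ for $m\geq1$; by the second assumption in (\ref{bp119}) and $\widetilde{U}_{\alpha}\geq0$ this quantity is $\geq M_{\alpha}-M_{\alpha}=0$, giving the desired inequality.

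I do not expect any serious obstacle here; the argument is essentially a bookkeeping exercise checking signs. The only subtle point is making sure that the lower solution is tested against $\widetilde{U}_{\alpha^{\prime}}$ (not $\widehat{U}_{\alpha^{\prime}}$) in (\ref{bp40b}) and the upper solution against $\widehat{U}_{\alpha^{\prime}}$, which is why the two one-sided conditions $f_{\alpha}(\cdot,0_{\alpha},u_{\alpha^{\prime}})\leq0$ and $f_{\alpha}(\cdot,u_{\alpha},0_{\alpha^{\prime}})\geq-M_{\alpha}$ in (\ref{bp119}) are tailored precisely to this coupling. Once these sign checks are assembled and the induction from Lemma \ref{bp12} is used to establish $\widetilde{U}_{\alpha}\geq0$, the theorem follows.
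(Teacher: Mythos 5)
Your proposal is correct and follows essentially the same route as the paper: induction on $m$ with the maximum principle of Lemma \ref{bp12} to get $\widetilde{U}_{\alpha}\geq 0$ (hence (\ref{bp40a})), then direct substitution of (\ref{bp120}) and (\ref{bp121}) into the residuals to reduce (\ref{bp40b}) to the sign conditions $f_{\alpha}(\cdot,0_{\alpha},u_{\alpha^{\prime}})\leq 0$ and $f_{\alpha}(\cdot,u_{\alpha},0_{\alpha^{\prime}})\geq -M_{\alpha}$ from (\ref{bp119}), and finally the boundary and initial checks. Your explicit write-out of the lower-solution inequality (which the paper dispatches with ``similarly'') is a welcome bit of extra detail, but the argument is the same.
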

\begin{proof}
From (\ref{bp119}) and (\ref{bp121}) with  $m=1$, by using the maximum principle in Lemma \ref{bp12}, we obtain that
\begin{equation*}
 \widetilde{U}_{\alpha}(p,t_{1})\geq0,\quad p\in \overline{\Omega}^{h},\quad \alpha=1,2.
\end{equation*}
From here and (\ref{bp121})  with $m=2$, by using the maximum principle in Lemma \ref{bp12}, we have
\begin{equation*}
 \widetilde{U}_{\alpha}(p,t_{2})\geq0,\quad p\in \overline{\Omega}^{h},\quad \alpha=1,2.
\end{equation*}
By induction on $m$, we can prove that
\begin{equation*}
 \widetilde{U}_{\alpha}(p,t_{m})\geq0,\quad p\in \overline{\Omega}^{h},\quad \alpha=1,2,\quad m\geq1.
\end{equation*}
 From here and (\ref{bp120}), we prove (\ref{bp40a}).

We now prove (\ref{bp40b})  for $(\widetilde{U}_{1}(p,t_{m}), \widehat{U}_{2}(p,t_{m}))$.
 We present the left hand side of (\ref{bp40b}) in the form
     \begin{align}\label{bp124}
  &  \mathcal{G}_{1}\left(\widetilde{U}_{1}(p,t_{m}), \widetilde{U}_{1}(p,t_{m-1}), \widehat{U}_{2}(p,t_{m})\right)=\\
  &\left(\mathcal{L}_{1}^{h}(p,t_{m})+\tau^{-1}\right)\widetilde{U}_{1}(p,t_{m})+f_{1}(p,t_{m},\widetilde{U}_{1}, \widehat{U}_{2})
  -\tau^{-1}\widetilde{U}_{1}(p,t_{m-1}), \nonumber\\
 & p\in \Omega^{h}, \quad m\geq1. \nonumber
  \end{align}
  Using  (\ref{bp121}) for $m\geq1$, we obtain that
   \begin{eqnarray*}
   &&   \mathcal{G}_{1}\left(\widetilde{U}_{1}(p,t_{m}), \widetilde{U}_{1}(p,t_{m-1}), \widehat{U}_{2}(p,t_{m})\right)= M_{1}+f_{1}(p,t_{m},\widetilde{U}_{1},0_{2}),\\
   && p\in \Omega^{h},\quad m\geq1.
   \end{eqnarray*}
   From here and using (\ref{bp119}), it follows that
  \begin{eqnarray*}
  &&  \mathcal{G}_{1}\left(\widetilde{U}_{1}(p,t_{m}), \widetilde{U}_{1}(p,t_{m-1}), \widehat{U}_{2}(p,t_{m})\right)\geq0,\quad
    p\in \Omega^{h},\quad m\geq1.
  \end{eqnarray*}
 Similarly, we can prove that
  \begin{eqnarray*}
  &&  \mathcal{G}_{2}\left(\widehat{U}_{2}(p,t_{m}), \widehat{U}_{2}(p,t_{m-1}), \widetilde{U}_{1}(p,t_{m})\right)\leq0,\quad
    p\in \Omega^{h},\quad m\geq1.
  \end{eqnarray*}
  Thus, we prove (\ref{bp40b}) for $(\widetilde{U}_{1}(p,t_{m}), \widehat{U}_{2}(p,t_{m}))$.  By following a similar argument, we can prove (\ref{bp40b}) for $(\widehat{U}_{1}(p,t_{m}), \widetilde{U}_{2}(p,t_{m}))$, that is,
    \begin{eqnarray*}
  &&  \mathcal{G}_{1}\left(\widehat{U}_{1}(p,t_{m}), \widehat{U}_{1}(p,t_{m-1}), \widetilde{U}_{2}(p,t_{m})\right)\leq0,\quad
 \\
 &&  \mathcal{G}_{2}\left(\widetilde{U}_{2}(p,t_{m}), \widetilde{U}_{2}(p,t_{m-1}), \widehat{U}_{1}(p,t_{m})\right)\geq0,\quad   p\in \Omega^{h},\quad m\geq1.
  \end{eqnarray*}
 Since $\widetilde{U}_{\alpha}(p,t_{m})$, $\alpha=1,2$,   satisfy the boundary and initial conditions (\ref{bp40c}),  $\widehat{U}_{\alpha}(p,0)$, $\alpha=1,2$, satisfy
  the initial condition and  $\widehat{U}_{\alpha}(p,t_{m})=0\leq g_{\alpha}(p,t_{m})$, $p\in \partial \Omega^{h}$,  $\alpha=1,2$,
   we conclude  that $\widehat{U}_{\alpha}(p,t_{m})$ and $\widetilde{U}_{\alpha}(p,t_{m})$, $\alpha=1,2$,  from, respectively, (\ref{bp120}) and (\ref{bp121}), are ordered lower and upper solutions (\ref{bp40}) to (\ref{bp9}).
\end{proof}
\subsubsection{Constant upper and lower solutions}
Let  the functions $f_{\alpha}$, $g_{\alpha}$ and $\psi_{\alpha}$, $\alpha=1,2$, in (\ref{bp1}) satisfy the conditions
\begin{align}\label{bp125}
&f_{\alpha}(x,y,t,0_{\alpha},u_{\alpha^{\prime}})\leq 0,\quad f_{\alpha}(x,y,t,K_{\alpha},0_{\alpha^{\prime}})\geq0,\quad u_{\alpha}(x,y,t)\geq0,\nonumber\\
&(x,y,t)\in \overline{Q}_{T},\quad 0\leq g_{\alpha}(x,y,t)\leq K_{\alpha},\ \ (x,y,t)\in \partial Q_{T},  \\
&0\leq\psi_{\alpha}(x,y)\leq K_{\alpha},\quad (x,y)\in\overline{\omega},\quad \alpha^{\prime}\neq \alpha,\quad  \alpha, \alpha^{\prime}=1,2, \nonumber
\end{align}
where $K_{1}$, $K_{2}$ are positive
constants. Introduce  the mesh functions
\begin{equation}\label{bp126}
\widetilde{U}_{\alpha}(p,t_{m})= \left\{ \begin{array}{ll}
\psi_{\alpha}(p),\quad p\in \overline{\Omega}^{h},\quad   m=0,\\
K_{\alpha},\quad  m\geq1,
\end{array}\right.
 \alpha=1,2.
\end{equation}
\begin{theorem}\label{bp127}
Suppose that the assumptions in  (\ref{bp125}) are satisfied. Then the mesh functions $\widehat{U}_{\alpha}(p,t_{m})$ and $\widetilde{U}_{\alpha}(p,t_{m})$,
$\alpha=1,2$,  from,
respectively, (\ref{bp120}) and (\ref{bp126}), are ordered lower and upper solutions (\ref{bp40}) to (\ref{bp9}).
\end{theorem}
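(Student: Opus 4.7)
The plan is to verify the three defining conditions of ordered upper and lower solutions in (\ref{bp40}) — namely (\ref{bp40a}), (\ref{bp40b}), and (\ref{bp40c}) — for the pair $(\widehat{U},\widetilde{U})$ given in (\ref{bp120}) and (\ref{bp126}). The proof will closely mirror the argument of Theorem \ref{bp80}, but with the residual applied to the cross-coupled arguments $(U_\alpha, U_{\alpha'})$ appropriate for the quasi-monotone nonincreasing case.

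First, I would verify (\ref{bp40a}): at $m=0$ both functions equal $\psi_\alpha(p)$, and for $m\geq 1$ we have $\widehat{U}_\alpha=0\leq K_\alpha=\widetilde{U}_\alpha$ by (\ref{bp125}). The boundary/initial condition (\ref{bp40c}) is equally immediate: on $\partial\Omega^h$ and $m\geq 1$ one has $0=\widehat{U}_\alpha\leq g_\alpha\leq K_\alpha=\widetilde{U}_\alpha$ by (\ref{bp125}), and at $m=0$ both sides equal $\psi_\alpha$, which sits between itself trivially.

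The substance of the proof lies in checking (\ref{bp40b}) for the four residual expressions. I would use the formula analogous to (\ref{bp76a}) but with the cross-coupled arguments dictated by (\ref{bp40b}). For $\widetilde{U}$ and the pair $(\widetilde{U}_\alpha,\widehat{U}_{\alpha'})$ at $m=1$, I would compute
\begin{equation*}
\mathcal{G}_{\alpha}\!\left(\widetilde{U}_{\alpha}(p,t_{1}),\psi_{\alpha}(p),\widehat{U}_{\alpha'}(p,t_{1})\right)=\mathcal{L}_{\alpha}^{h}(p,t_{1})K_{\alpha}+f_{\alpha}(p,t_{1},K_{\alpha},0_{\alpha'})+\tau^{-1}\bigl(K_{\alpha}-\psi_{\alpha}(p)\bigr),
\end{equation*}
observe that $\mathcal{L}_\alpha^h K_\alpha = 0$ because $K_\alpha$ is constant and all the first/second difference operators annihilate constants, then invoke $f_\alpha(\cdot,K_\alpha,0_{\alpha'})\geq 0$ and $K_\alpha\geq\psi_\alpha\geq 0$ from (\ref{bp125}) to conclude nonnegativity. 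For $m\geq 2$ the difference $\widetilde{U}_\alpha(p,t_m)-\widetilde{U}_\alpha(p,t_{m-1})=K_\alpha-K_\alpha=0$, so the residual reduces to $f_\alpha(p,t_m,K_\alpha,0_{\alpha'})\geq 0$.

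Similarly, for $\widehat{U}$ with the cross-coupled pair $(\widehat{U}_\alpha,\widetilde{U}_{\alpha'})$ at $m=1$, the residual becomes $f_\alpha(p,t_1,0_\alpha,K_{\alpha'})-\tau^{-1}\psi_\alpha(p)$, and for $m\geq 2$ simply $f_\alpha(p,t_m,0_\alpha,K_{\alpha'})$; both are $\leq 0$ by the first inequality in (\ref{bp125}) together with $\psi_\alpha\geq 0$. I will handle the symmetric pairs $(\widetilde{U}_1,\widehat{U}_2)$, $(\widehat{U}_2,\widetilde{U}_1)$, $(\widehat{U}_1,\widetilde{U}_2)$, $(\widetilde{U}_2,\widehat{U}_1)$ by the same short calculation. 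No essential obstacle is expected; the only delicate point is to keep the arguments of the reaction term in the correct cross-coupled order required by (\ref{bp40b}), as opposed to the diagonal order used in Theorem \ref{bp80}. Once all four sign checks are recorded, combining them with the ordering and boundary/initial verifications yields the statement.
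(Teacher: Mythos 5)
Your proposal is correct and follows essentially the same route as the paper: verify (\ref{bp40a}) and (\ref{bp40c}) directly from (\ref{bp120}), (\ref{bp126}), (\ref{bp125}), and reduce the residuals in (\ref{bp40b}) to $f_{\alpha}(p,t_{1},K_{\alpha},0_{\alpha'})+\tau^{-1}(K_{\alpha}-\psi_{\alpha})\geq 0$ and $f_{\alpha}(p,t_{1},0_{\alpha},K_{\alpha'})-\tau^{-1}\psi_{\alpha}\leq 0$ at $m=1$, and to $f_{\alpha}(p,t_{m},K_{\alpha},0_{\alpha'})\geq 0$ and $f_{\alpha}(p,t_{m},0_{\alpha},K_{\alpha'})\leq 0$ for $m\geq 2$. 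Your explicit remark that $\mathcal{L}_{\alpha}^{h}K_{\alpha}=0$ because the difference operators annihilate constants is a small point the paper leaves implicit, but otherwise the two arguments coincide.
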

\begin{proof}
From (\ref{bp120}) and (\ref{bp126}), it is clear that the inequalities in  (\ref{bp40a}) are satisfied.
We now prove (\ref{bp40b}) for $(\widetilde{U}_{1}(p,t_{m}), \widehat{U}_{2}(p,t_{m})) $.
 Using (\ref{bp126}), we write  the left hand side of (\ref{bp40b}) for $m=1$  in the form
     \begin{align*}
  &  \mathcal{G}_{1}\left(\widetilde{U}_{1}(p,t_{1}), \psi_{1}(p), \widehat{U}_{2}(p,t_{1})\right)=f_{1}(p,t_{1},K_{1}, 0_{2})+\tau^{-1}(K_{1}-\psi_{1}(p)), \\
 &p\in \Omega^{h}.
  \end{align*}
From here and  (\ref{bp125}), we conclude that
\begin{equation*}
 \mathcal{G}_{1}\left(\widetilde{U}_{1}(p,t_{1}), \psi_{1}(p), \widehat{U}_{2}(p,t_{1})\right)\geq
 0,\quad p\in \Omega^{h}.
\end{equation*}
From (\ref{bp125}) and (\ref{bp126}), using (\ref{bp40b}) for $m\geq2$,  we have
\begin{equation*}
 \mathcal{G}_{1}\left(\widetilde{U}_{1}(p,t_{m}), \widetilde{U}_{1}(p,t_{m-1}), \widehat{U}_{2}(p,t_{m})\right)= f_{1}(p,t_{m},K_{1}, 0_{2})\geq0,\quad p\in \Omega^{h}.
\end{equation*}
Similarly, we can prove
\begin{equation*}
 \mathcal{G}_{2}\left(\widehat{U}_{2}(p,t_{m}), \widehat{U}_{2}(p,t_{m-1}), \widetilde{U}_{1}(p,t_{m})\right)\leq0,\quad p\in \Omega^{h},\quad m\geq1.
\end{equation*}
Thus, we prove (\ref{bp40b}) for $(\widetilde{U}_{1}(p,t_{m}), \widehat{U}_{2}(p,t_{m}) ) $.

By a similar argument, we can prove (\ref{bp40b}) for $(\widehat{U}_{1}(p,t_{m}), \widetilde{U}_{2}(p,t_{m}))$, that is,
\begin{eqnarray*}
&& \mathcal{G}_{1}\left(\widehat{U}_{1}(p,t_{m}), \widehat{U}_{1}(p,t_{m-1}), \widetilde{U}_{2}(p,t_{m})\right)\leq0,
\\ && \mathcal{G}_{2}\left(\widetilde{U}_{2}(p,t_{m}), \widetilde{U}_{2}(p,t_{m-1}), \widehat{U}_{1}(p,t_{m})\right)\geq0 ,\quad p\in \Omega^{h} \quad m\geq1.
\end{eqnarray*}
Since $\widetilde{U}_{\alpha}(p,t_{0})$,  $\widehat{U}_{\alpha}(p,t_{0})$ , $\alpha=1,2$, satisfy the  initial condition
 and $\widetilde{U}_{\alpha}(p,t_{m})\\ \geq g_{\alpha}(p,t_{m})$, $\widehat{U}_{\alpha}(p,t_{m})\leq g_{\alpha}(p,t_{m})$, $p\in\partial \Omega^{h}$, $\alpha=1,2$, at $m\geq1$,
   we conclude that $\widehat{U}$ and $\widetilde{U}$ from, respectively,
 (\ref{bp120}) and (\ref{bp126}),
are ordered lower and upper solutions (\ref{bp40}) to (\ref{bp9}).
\end{proof}
\subsection{Applications}
\subsection{The Belousov-Zhabotinskii reaction diffusion system}
 The Belousov-Zhabotinskii reaction diffusion  model \cite{P92} includes   the metal-ion-catalyzed oxidation
 by bromate ion of brominated organ materials. the chemical reaction scheme is given by
 \begin{equation*}
  A_{1}+Y \rightarrow X,\quad X+Y\rightarrow P_{1}  , \quad
  A_{2}+X\rightarrow 2 X +Z ,\quad 2X\rightarrow P_{2}, \quad Z\rightarrow \lambda Y,
\end{equation*}
 where $A_{1}$ and $A_{2}$ are constants which represent reactants, $P_{1}$ and $P_{2}$ are products, $\lambda$ is the stoichiometric factor, and $X$, $Y$ and $Z$ are, respectively, the concentrations of the intermediates HBrO2 (bromous acid), B$\mbox{r}^{-}$ (bromide ion) and Ce(IV)(cerium). A simplified system of two
 equations \cite{Fi74} of the above reactant scheme is governed  by  (\ref{bp1}) with $L_{\alpha}u_{\alpha}=\varepsilon_{\alpha}\triangle u_{\alpha}$, $ \alpha=1,2$, where
 $u_{1}$ and $u_{2}$  represent, respectively, the concentrations $X$ and $Y$.
The reaction functions are given by
\begin{equation}\label{bp128}
  f_{1}=-u_{1}(a-bu_{1}-\sigma_{1}u_{2}),\quad f_{2}=\sigma_{2}u_{1}u_{2},
\end{equation}
where $a$, $b$, $\sigma_{\alpha}$, $\alpha=1,2$, are positive constants. 
 It is clear from (\ref{bp128}) that $f_{\alpha}$, $\alpha=1,2$,
are quasi-monotone nonincreasing functions (\ref{bp42}).
The nonlinear difference scheme (\ref{bp9}) is reduced to
\begin{eqnarray}\label{bp129}
 && (\mathcal{L}_{\alpha}^{h}(p,t_{m})+\tau^{-1}) U_{\alpha}(p,t_{m})+f_{\alpha}(U)-\tau^{-1}U_{\alpha}(p,t_{m-1})=0,\quad p \in \Omega^{h},\nonumber \\
&&  U_{\alpha}(p,t_{m})=g_{\alpha}(p,t_{m}),\quad p\in \partial \Omega^{h},\quad m\geq1, \\
&&  U_{\alpha}(p,0)=\psi_{\alpha}(p),\quad p\in \overline{\Omega}^{h},\quad \alpha=1,2,\nonumber \\
&& \mathcal{L}_{\alpha}^{h}(p,t_{m})U_{\alpha}(p,t_{m})=-\varepsilon_{\alpha}\left( D^{2}_{x}U_{\alpha}(p,t_{m})+D^{2}_{y}U_{\alpha}(p,t_{m})\right),\nonumber
  \end{eqnarray}
where $f_{\alpha}$, $\alpha=1,2$, are defined in (\ref{bp128}).  To satisfy  the assumptions in
(\ref{bp125}),  we choose constants    $K_{\alpha}$, $\alpha=1,2$,  in the following form
\begin{eqnarray*}
 && K_{1}\geq \max \left(a/b, \max_{(x,y,t)\in \partial Q_{T}}g_{1}(x,y,t), \max_{(x,y)\in \overline{\omega}}\psi_{1}(x,y)\right), \\
 && K_{2}\geq \max \left( \max_{(x,y,t)\in \partial Q_{T}}g_{2}(x,y,t), \max_{(x,y)\in \overline{\omega}}\psi_{2}(x,y)\right),\nonumber
\end{eqnarray*}
it follows that the mesh functions $\widehat{U}_{\alpha}(p,t_{m})$ and $\widetilde{U}_{\alpha}(p,t_{m})$
 from, respectively, (\ref{bp120}) and (\ref{bp126}) are ordered lower and upper solutions to (\ref{bp129}).

From (\ref{bp128}), in the sector $\langle \widehat{U}(t_{m}), \widetilde{U}(t_{m})\rangle=\langle 0, K_{\alpha}\rangle$, we have
\begin{eqnarray*}
&&\frac{\partial f_{1}}{\partial u_{1}}(U_{1},U_{2})=2 b U_{1}(p,t_{m})+\sigma_{1} U_{2}(p,t_{m})-a\leq 2 b K_{1}+\sigma_{1}K_{2},\ \ p\in \overline{\Omega}^{h},\\
&& \frac{\partial f_{2}}{\partial u_{2}}(U_{1},U_{2})=\sigma_{2}U_{1}(p,t_{m})\leq \sigma_{2}K_{1},\quad p\in \overline{\Omega}^{h},\\
&&-\frac{\partial f_{1}}{\partial u_{2}}(U_{1},U_{2})=-\sigma_{1}U_{1}(p,t_{m})\leq0,\quad p\in \overline{\Omega}^{h},\\
&&-\frac{\partial f_{2}}{\partial u_{1}}(U_{1},U_{2})=-\sigma_{2}U_{2}(p,t_{m})\leq0,\quad p\in \overline{\Omega}^{h},\quad m\geq1,
\end{eqnarray*}
and the assumptions in (\ref{bp20}) and (\ref{bp42}) are satisfied  with
\begin{equation*}
c_{1}=2 b K_{1}+\sigma_{1} K_{2},\quad c_{2}= \sigma_{2}K_{1}.
\end{equation*}
From here, (\ref{bp120}) and (\ref{bp126}), we conclude that Theorem \ref{bp97}
holds  for the Belousov-Zhabotinskii reaction diffusion model  (\ref{bp129}).
\subsection{Enzyme-substrate reaction diffusion   model}
In the enzyme-substrate model \cite{P92}, the chemical reaction scheme
is given by $E+S\rightleftharpoons ES\rightarrow E+P$, where $E$, $S$ and $P$ are, respectively,  enzyme,
substrate and reaction product. Denote by  $u_{1}(x,y,t)$ and $u_{2}(x,y,t)$ the concentrations of $S$ and $E$, respectively.
Then the above reactant scheme is governed by (\ref{bp1}) with $L_{\alpha}u_{\alpha}=\varepsilon_{\alpha}
\triangle u_{\alpha}$, $\alpha=1,2$. The reaction functions are given by  
\begin{equation}\label{bp83}
  f_{1}=a_{1}u_{1}u_{2}-b_{1}(E_{0}-u_{2}),\quad  f_{2}=a_{2}u_{1}u_{2}-b_{2}(E_{0}-u_{2}),
\end{equation}
 where a positive constant $E_{0}$ is the total enzyme,
 $a_{\alpha}>0$, $b_{\alpha}>0$, $\alpha=1,2$, are reaction constants.
It is clear from (\ref{bp83}) that $f_{\alpha}$, $\alpha=1,2$, are quasi-monotone nonincreasing functions (\ref{bp42}).
%
The nonlinear difference scheme (\ref{bp9}) is reduced to
\begin{eqnarray}\label{bp84}
 && (\mathcal{L}_{\alpha}^{h}(p,t_{m})+\tau^{-1}) U_{\alpha}(p,t_{m})+f_{\alpha}(U)-\tau^{-1}U_{\alpha}(p,t_{m-1})=0,\quad p \in \Omega^{h},\nonumber \\
&&  U_{\alpha}(p,t_{m})=g_{\alpha}(p,t_{m})\geq0,\quad p\in \partial \Omega^{h},\quad m\geq1,   \\
&& U_{\alpha}(p,0)=\psi_{\alpha}(p),\quad p\in \overline{\Omega}^{h},\quad \alpha=1,2,\nonumber \\
&& \mathcal{L}_{\alpha}^{h}(p,t_{m})U_{\alpha}(p,t_{m})=-\varepsilon_{\alpha}\left( D^{2}_{x}U_{\alpha}(p,t_{m})+D^{2}_{y}U_{\alpha}(p,t_{m})\right),\nonumber
  \end{eqnarray}
where $f_{\alpha}$, $\alpha=1,2$, are defined in (\ref{bp83}).

Introduce the linear problem
\begin{eqnarray}\label{bp85}
 && (\mathcal{L}_{1}^{h}(p,t_{m})+\tau^{-1}) V(p,t_{m})=\tau^{-1}V(p,t_{m-1})+M_{0},\quad p\in \Omega^{h},\quad m\geq1,\nonumber \\
 && V(p,t_{m})=g_{1}(p,t_{m}),\quad p\in\partial \Omega^{h},\quad V(p,0)=\psi_{1}(p),\quad p\in\overline{\Omega}^{h},\nonumber\\
 && M_{0}=\mbox{const}>0,\quad M_{0}>b_{1}E_{0}.
\end{eqnarray}
We now prove that    $(V(p,t_{m}), E_{0})$ and $(0,0)$  are ordered  upper and lower  solutions (\ref{bp40}) to (\ref{bp84}).
Firstly, we prove that $V(p,t_{m})\geq0$. From (\ref{bp85}), for $m=1$, we obtain that
\begin{eqnarray*}
 && (\mathcal{L}_{1}^{h}(p,t_{1})+\tau^{-1}) V(p,t_{1})=\tau^{-1}\psi_{1}(p)+M_{0},\quad p\in \Omega^{h},\nonumber \\
 && V(p,t_{1})=g_{1}(p,t_{1}),\quad p\in\partial \Omega^{h},\quad V(p,0)=\psi_{1}(p),\quad p\in\overline{\Omega}^{h}.
\end{eqnarray*}
From here and taking into account that $\psi_{1}(p)\geq0$, we have
\begin{eqnarray*}
&& (\mathcal{L}_{1}^{h}(p,t_{1})+\tau^{-1}) V(p,t_{1})\geq0,\quad p\in \Omega^{h}.\\
 && V(p,t_{1})=g_{1}(p,t_{1}),\quad p\in\partial \Omega^{h},\quad V(p,0)=\psi_{1}(p),\quad p\in\overline{\Omega}^{h}.
\end{eqnarray*}
Using the maximum principle in Lemma \ref{bp12}, we conclude that
\begin{equation*}
  V(p,t_{1})\geq0,\quad p\in \overline{\Omega}^{h}.
\end{equation*}
From here and (\ref{bp85}), for $m=2$, by a similar manner, we conclude that
\begin{equation*}
  V(p,t_{2})\geq0,\quad p\in \overline{\Omega}^{h}.
\end{equation*}
By induction on $m$, we can prove that $V(p,t_{m})\geq0$, $p\in\overline{\Omega}^{h}$, $m\geq1$. From here,  taking into account that the total enzyme
$E_{0}>0$ and (\ref{bp120}), it follows that $(V(p,t_{m}),E_{0})$ and $(0,0)$ satisfy (\ref{bp40a}). We now prove (\ref{bp40b}) for $(\widetilde{U}_{1}(p,t_{m}), \widehat{U}_{2}(p,t_{m}))= (V(p,t_{m}),0)$.
From  (\ref{bp40b}),  by using  (\ref{bp85}), we obtain that
\begin{equation*}
   \mathcal{G}_{1}\left(\widetilde{U}_{1}(p,t_{m}), \widetilde{U}_{1}(p,t_{m-1}), \widehat{U}_{2}(p,t_{m})\right)=f_{1}(V, 0)+M_{0},\quad p\in \Omega^{h},\quad m\geq1.
  \end{equation*}
From here, (\ref{bp83}) and (\ref{bp85}), we conclude that
\begin{equation*}
   \mathcal{G}_{1}\left(\widetilde{U}_{1}(p,t_{m}), \widetilde{U}_{1}(p,t_{m-1}), \widehat{U}_{2}(p,t_{m})\right)\geq0,\quad p\in \Omega^{h},\quad m\geq1.
 \end{equation*}
 Similarly, we prove that
 \begin{equation*}
   \mathcal{G}_{2}\left(\widehat{U}_{2}(p,t_{m}), \widehat{U}_{2}(p,t_{m-1}), \widetilde{U}_{1}(p,t_{m})\right)\leq0,\quad p\in \Omega^{h},\quad m\geq1.
 \end{equation*}
 Thus, we prove (\ref{bp40b}) for $(\widetilde{U}_{1}(p,t_{m}),\widehat{U}_{2}(p,t_{m}))=(V(p,t_{m}), 0)$.

Now, from (\ref{bp40b}) for $(\widehat{U}_{1}(p,t_{m}),\widetilde{U}_{2}(p,t_{m}))=(0, E_{0})$,  we have
 \begin{equation*}
   \mathcal{G}_{1}\left(\widehat{U}_{1}(p,t_{m}), \widehat{U}_{1}(p,t_{m-1}), \widetilde{U}_{2}(p,t_{m})\right)=f_{1}(0, E_{0}),\quad p\in \Omega^{h},\quad m\geq1.
  \end{equation*}
 From here and  (\ref{bp83}), we conclude that
 \begin{equation*}
   \mathcal{G}_{1}\left(\widehat{U}_{1}(p,t_{m}), \widehat{U}_{1}(p,t_{m-1}), \widetilde{U}_{2}(p,t_{m})\right)=0,\quad p\in \Omega^{h},\quad m\geq1.
  \end{equation*}
 Similarly, we obtain that
  \begin{equation*}
   \mathcal{G}_{2}\left(\widetilde{U}_{2}(p,t_{m}), \widetilde{U}_{2}(p,t_{m-1}), \widehat{U}_{1}(p,t_{m})\right)=0,\quad p\in \Omega^{h},\quad m\geq1.
  \end{equation*}
 Thus, we prove (\ref{bp40b}) for $(\widehat{U}_{1}(p,t_{m}),\widetilde{U}_{2}(p,t_{m}))=(0, E_{0})$.

Taking into account that the total enzyme $E_{0}$ satisfies  $E_{0}\geq u_{2}$,  we conclude that  $(\widetilde{U}_{1}, \widetilde{U}_{2})= (V,E_{0})$ and
 $(\widehat{U}_{1}, \widehat{U}_{2})= (0,0)$ satisfy (\ref{bp40c}). Thus,  we prove that
$(\widetilde{U}_{1},\widetilde{U}_{2})=(V,E_{0})$ and $(\widehat{U}_{1},\widehat{U}_{2})=(0,0)$   are ordered  upper and lower  solutions (\ref{bp40}) to (\ref{bp84}).
From (\ref{bp83}), in the sector $\langle \widehat{U}(t_{m}), \widetilde{U}(t_{m})\rangle$, $ \widehat{U}=(0,0)$, $ \widetilde{U}=(V,E_{0}) $,  we have
\begin{eqnarray*}
&&\frac{\partial f_{1}}{\partial u_{1}}(U_{1},U_{2})=a_{1} U_{2}(p,t_{m})\leq a_{1}E_{0},\quad p\in \overline{\Omega}^{h},\quad m\geq 1,\\
&& \frac{\partial f_{2}}{\partial u_{2}}(U_{1},U_{2})=a_{2}U_{1}(p,t_{m})+b_{2}\leq a_{2} V(p,t_{m})+b_{2},\quad p\in \overline{\Omega}^{h},\quad m\geq1,\\
&&-\frac{\partial f_{1}}{\partial u_{2}}(U_{1},U_{2})=-(a_{1}U_{1}(p,t_{m})+b_{2})\leq0,\quad p\in \overline{\Omega}^{h},\quad m\geq1,\\
&&-\frac{\partial f_{2}}{\partial u_{1}}(U_{1},U_{2})=-a_{2}U_{2}(p,t_{m})\leq0,\quad p\in \overline{\Omega}^{h},\quad m\geq1.
\end{eqnarray*}
Thus, the assumptions in (\ref{bp20}) and (\ref{bp42}) are satisfied  with
\begin{equation*}
c_{1}=a_{1}E_{0},\quad c_{2}(p,t_{m})= a_{2} V(p,t_{m})+b_{2},\quad p\in \overline{\Omega}^{h},\quad m\geq1.
\end{equation*}
From here, (\ref{bp120}) and (\ref{bp85}), we conclude that Theorem \ref{bp97}
holds  for the enzyme-substrate reaction diffusion   model (\ref{bp84}).
\section{Comparison of the block monotone Jacobi and block monotone Gauss--Seidel methods}
The following theorem shows that the block monotone Gauss--Seidel  method (\ref{bp94}), $(\eta=1)$, converges not slower than the block monotone Jacobi method
(\ref{bp27}), $(\eta=0)$.
\begin{theorem}\label{bp130}
Let $f(p,t_{m},U)$ in (\ref{bp9})  satisfy  (\ref{bp20}) and (\ref{bp42}),
 where    $\widetilde{U}(p,t_{m})=(\widetilde{U}_{1}(p,t_{m}),\widetilde{U}_{2}(p,t_{m}))$ and
 $\widehat{U}(p,t_{m})=(\widehat{U}_{1}(p,t_{m}),\widehat{U}_{2}(p,t_{m}))$
 are ordered upper and lower solutions (\ref{bp40}) of the nonlinear difference scheme (\ref{bp9}).
 Suppose that   $\{(\overline{U}^{(n)}_{1,i,m})_{J},(\underline{U}^{(n)}_{2,i,m})_{J}\}$, $\{(\underline{U}^{(n)}_{1,i,m})_{J},(\overline{U}^{(n)}_{2,i,m})_{J}\}$
  and $\{(\overline{U}^{(n)}_{1,i,m})_{GS},(\underline{U}^{(n)}_{2,i,m})_{GS}\}$, $\{(\underline{U}^{(n)}_{1,i,m})_{GS},(\overline{U}^{(n)}_{2,i,m})_{GS}\}$
 , $i=0,1,\ldots,N_{x}$, $\alpha=1,2$, $m\geq1$,
 are, respectively, the sequences generated by the block monotone Jacobi method (\ref{bp94}), $(\eta=0)$
 and the block monotone Gauss--Seidel method (\ref{bp94}), $(\eta=1)$,
 where $(\overline{U}^{(0)})_{J}=(\overline{U}^{(0)})_{GS}=\widetilde{U}$ and
  $(\underline{U}^{(0)})_{J}=(\underline{U}^{(0)})_{GS}=\widehat{U}$, then
  \begin{eqnarray}\label{bp131}
&&(\underline{U}^{(n)}_{\alpha,i,m})_{J}\leq  (\underline{U}^{(n)}_{\alpha,i,m})_{GS}\leq (\overline{U}^{(n)}_{\alpha,i,m})_{GS} \leq (\overline{U}^{(n)}_{\alpha,i,m})_{J},\quad i=0,1,\ldots,N_{x},\nonumber \\
&&  \alpha=1,2,\quad m\geq1.
  \end{eqnarray}
\end{theorem}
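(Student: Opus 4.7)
The plan is to mirror the proof of Theorem \ref{bp89}, adapting it to the quasi-monotone nonincreasing setting. The essential structural change is that here we must track two coupled pairs of sequences, $\{(\overline{U}^{(n)}_1)_{\bullet},(\underline{U}^{(n)}_2)_{\bullet}\}$ and $\{(\underline{U}^{(n)}_1)_{\bullet},(\overline{U}^{(n)}_2)_{\bullet}\}$ (with $\bullet\in\{J,GS\}$), generated by (\ref{bp94a}) and (\ref{bp95b}) respectively, and the monotone property used to control the nonlinearity must come from Lemma \ref{bp43} (not Lemma \ref{bp23}).

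First, I will write down, side by side, the Jacobi $(\eta=0)$ and Gauss--Seidel $(\eta=1)$ updates from (\ref{bp94a})--(\ref{bp96c}) for each of the four sequence components, and subtract them. Introducing the four difference vectors
\begin{equation*}
\underline{W}^{(n)}_{\alpha,i,m}=\bigl(\underline{U}^{(n)}_{\alpha,i,m}\bigr)_{GS}-\bigl(\underline{U}^{(n)}_{\alpha,i,m}\bigr)_{J},\qquad
\overline{W}^{(n)}_{\alpha,i,m}=\bigl(\overline{U}^{(n)}_{\alpha,i,m}\bigr)_{J}-\bigl(\overline{U}^{(n)}_{\alpha,i,m}\bigr)_{GS},
\end{equation*}
for $\alpha=1,2$, the subtraction produces, for each $\alpha$, an equation of the form $(A_{\alpha,i,m}+c_{\alpha,m}I)\underline{W}^{(n)}_{\alpha,i,m}=L_{\alpha,i,m}(\cdots)+R_{\alpha,i,m}\underline{W}^{(n-1)}_{\alpha,i+1,m}+(\text{difference of }\Gamma\text{-terms})+\tau^{-1}(\cdots)$, analogous to (\ref{bp91}). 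By Theorem \ref{bp97}, the GS sequences are themselves monotone, so the ``shifted'' term $L_{\alpha,i,m}\bigl[(\underline{U}^{(n)}_{\alpha,i-1,m})_{GS}-(\underline{U}^{(n-1)}_{\alpha,i-1,m})_J\bigr]$ can be bounded from below by $L_{\alpha,i,m}\underline{W}^{(n-1)}_{\alpha,i-1,m}$, exactly as in the passage from (\ref{bp91}) to (\ref{bp92}).

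Second, I will rewrite the difference of the nonlinear $\mathcal{G}$-residuals using the $\Gamma_\alpha$ notation (\ref{bp22}), pairing the cross-coupled arguments so that each $\Gamma$-difference takes the form $\Gamma_{\alpha,i,m}(U_1,V_2)-\Gamma_{\alpha,i,m}(V_1,U_2)$ with $U\geq V$ in the relevant ordering. Lemma \ref{bp43} then gives nonnegativity of these $\Gamma$-differences. Combined with $(A_{\alpha,i,m}+c_{\alpha,m}I)^{-1}\geq O$ (Corollary 3.20, \cite{Varga2000}) and $L_{\alpha,i,m},R_{\alpha,i,m}\geq O$, a line-by-line induction on the spatial index $i$ (starting from $W^{(n)}_{\alpha,0,m}=\mathbf{0}$) yields $\underline{W}^{(n)}_{\alpha,i,m}\geq\mathbf{0}$ and $\overline{W}^{(n)}_{\alpha,i,m}\geq\mathbf{0}$ for all $i$. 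An outer induction on $n$ at each time level, followed by an induction on $m$ (using the already-established inequality $(U_{\alpha,i,m-1})_{GS}\geq(U_{\alpha,i,m-1})_J$ to dispose of the $\tau^{-1}$-term on the previous level), then delivers (\ref{bp131}).

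The main obstacle will be the bookkeeping of the cross-coupled sign structure: the $\Gamma$-terms arising in the equation for $\underline{W}^{(n)}_{1,\cdot,m}$ involve both $(\underline{U}_1)_{GS}-(\underline{U}_1)_J$ and $(\overline{U}_2)_J-(\overline{U}_2)_{GS}$, and one must verify that the pair fed into Lemma \ref{bp43} is correctly ordered componentwise before concluding $\Gamma\geq 0$. This forces a joint (rather than separate) induction across the two pairs of sequences and across $\alpha=1,2$. Once the signs line up, the remainder is the standard M-matrix/nonnegative-off-diagonal argument already used in Theorems \ref{bp30}, \ref{bp97}, and \ref{bp89}, so I expect no further difficulty.
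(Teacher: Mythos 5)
Your proposal is correct and follows essentially the same route as the paper: subtract the Jacobi and Gauss--Seidel iterates, use the monotonicity from Theorem \ref{bp97} to absorb the shifted $L_{\alpha,i,m}$ term, invoke Lemma \ref{bp43} with the cross-paired arguments $U=\bigl((\overline{U}^{(n)}_{1})_{J},(\underline{U}^{(n)}_{2})_{GS}\bigr)$, $V=\bigl((\overline{U}^{(n)}_{1})_{GS},(\underline{U}^{(n)}_{2})_{J}\bigr)$, and close with the $M$-matrix inverse-positivity and the triple induction on $i$, $n$, $m$. The only cosmetic difference is your sign convention for $\overline{W}$ (the paper keeps $\overline{W}=GS-J\leq\mathbf{0}$), and the joint induction you flag is needed across the two components within each coupled pair, while the two pairs themselves can be treated separately, exactly as the paper does.
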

\begin{proof}
We consider the case of the sequences  $\{(\overline{U}^{(n)}_{1,i,m})_{J},(\underline{U}^{(n)}_{2,i,m})_{J}\}$ and
   $\{(\overline{U}^{(n)}_{1,i,m})_{GS},(\underline{U}^{(n)}_{2,i,m})_{GS}\}$.  From (\ref{bp27}),  we have
 \begin{eqnarray*}
 &&  A_{1,i,m}(\overline{U}_{1,i,m}^{(n)})_{J}+c_{1,m}(\overline{U}^{(n)}_{1,i,m})_{J} =c_{1,m}(\overline{U}^{(n-1)}_{1,i,m})_{J}+ L_{1,i,m}(\overline{U}_{1,i-1,m}^{(n-1)})_{J} \\
    &&+R_{1,i,m}(\overline{U}_{1,i+1,m}^{(n-1)})_{J} - F_{1,i,m}(\overline{U}_{1,i,m}^{(n-1)}, \underline{U}_{2,i,m}^{(n-1)} )_{J}+
    \tau^{-1} (\overline{U}_{1,i,m-1})_{J}\nonumber \\
    &&-G^{*}_{1,i,m}, \quad  i=1,2,\ldots,N_{x}-1,\quad (\overline{U}^{(n)}_{1,i,m})_{J}=g_{1,i,m}, \\
    &&i=0,N_{x},\quad m\geq1,\quad (U_{1,i,0})_{J}=\psi_{1,i},\quad i=0,1,\ldots,N_{x}.
 \end{eqnarray*}
  \begin{eqnarray*}
 &&  A_{1,i,m}(\overline{U}_{1,i,m}^{(n)})_{GS}+c_{1,m}(\overline{U}^{(n)}_{1,i,m})_{GS} =c_{1,m}(\overline{U}^{(n-1)}_{1,i,m})_{GS} \\
    &&+ L_{1,i,m}(\overline{U}_{1,i-1,m}^{(n)})_{GS} +R_{1,i,m}(\overline{U}_{1,i+1,m}^{(n-1)})_{GS} - F_{1,i,m}(\overline{U}_{1,i,m}^{(n-1)}, \underline{U}_{2,i,m}^{(n-1)} )_{GS}\nonumber \\
    &&+\tau^{-1} (\overline{U}_{1,i,m-1})_{GS}-G^{*}_{1,i,m}, \quad  i=1,2,\ldots,N_{x}-1, \\
    &&(\overline{U}^{(n)}_{1,i,m})_{GS}=g_{1,i,m},\quad i=0,N_{x},\quad m\geq1,\quad (U_{1,i,0})_{GS}=\psi_{1,i},\\
    && i=0,1,\ldots,N_{x}.
 \end{eqnarray*}
 From here, letting $W^{(n)}_{\alpha,i,m}=\left(U^{(n)}_{\alpha,i,m}\right)_{GS}-\left(U^{(n)}_{\alpha,i,m}\right)_{J}$, $i=0,1,\ldots,N_{x}$, $\alpha=1,2$, $m\geq1$, we have
 \begin{eqnarray}\label{bp132}
   &&   A_{1,i,m}\overline{W}^{(n)}_{1,i,m}+c_{1,m}\overline{W}^{(n)}_{1,i,m} = c_{1,m}\overline{W}^{(n-1)}_{1,i,m} \\ 
       &&+L_{1,i,m}\left((\overline{U}^{(n)}_{1,i-1,m})_{GS}-(\overline{U}^{(n-1)}_{1,i-1,m})_{J}\right)+R_{1,i,m}\overline{W}^{(n-1)}_{1,i+1,m} \nonumber\\
       &&- F_{1,i,m}\left(\overline{U}_{1,i,m}^{(n-1)}, \underline{U}_{2,i,m}^{(n-1)} \right)_{GS}+ F_{1,i,m}\left(\overline{U}_{1,i,m}^{(n-1)}, \underline{U}_{2,i,m}^{(n-1)} \right)_{J}\nonumber \\
       && +\tau^{-1} \left((\overline{U}_{1,i,m-1})_{GS}- (\overline{U}_{1,i,m-1})_{J}\right),\quad i=1,2,\ldots,N_{x}-1, \nonumber \\
    &&\overline{W}^{(n)}_{1,i,m}=\mathbf{0},\quad i=0,N_{x},\quad m\geq1.\nonumber
        \end{eqnarray}
By using  Theorem \ref{bp97}, we have $\left(\overline{U}^{(n)}_{1,i,m}\right)_{GS}\leq \left(\overline{U}^{(n-1)}_{1,i,m}\right)_{GS}$, $i=0,1,\ldots,N_{x}$, $m\geq1$. From here
and (\ref{bp132}), we obtain
 \begin{eqnarray}\label{bp133}
   &&   A_{1,i,m}\overline{W}^{(n)}_{1,i,m}+c_{1,m}\overline{W}^{(n)}_{1,i,m} \leq c_{1,m}\overline{W}^{(n-1)}_{1,i,m} +L_{1,i,m}\overline{W}^{(n-1)}_{1,i,m}\\ 
       &&+R_{1,i,m}\overline{W}^{(n-1)}_{1,i+1,m}- F_{1,i,m}\left(\overline{U}_{1,i,m}^{(n-1)}, \underline{U}_{2,i,m}^{(n-1)} \right)_{GS} \nonumber\\
       &&+ F_{1,i,m}\left(\overline{U}_{1,i,m}^{(n-1)}, \underline{U}_{2,i,m}^{(n-1)} \right)_{J}+\tau^{-1} \left((\overline{U}_{1,i,m-1})_{GS} - (\overline{U}_{1,i,m-1})_{J}\right),\nonumber\\
       && i=1,2,\ldots,N_{x}-1,\quad\overline{W}^{(n)}_{1,i,m}=\mathbf{0},\quad i=0,N_{x},\quad m\geq1.\nonumber
        \end{eqnarray}
Taking into account that $(A_{1,i,m}+c_{1,m}I)^{-1}\geq \emph{O}$,
$L_{1,i,m}\geq\emph{O}$, $R_{1,i,m}\geq\emph{O}$, $i=1,2,\ldots,N_{x}-1$,
 $m\geq1$, for $n=1$ in (\ref{bp133}), on the first time level $m=1$, in view
of $(\overline{U}^{(0)}_{1,i,m})_{GS}=(\overline{U}^{(0)}_{1,i,m})_{J}$
and $\overline{W}^{(0)}_{1,i,m}=\mathbf{0}$,  we conclude that
\begin{equation*}
  \overline{W}^{(1)}_{1,i,1}\leq\mathbf{0},\quad i=0,1,\ldots,N_{x}.
\end{equation*}
For $n=2$ in (\ref{bp133}) and using  notation (\ref{bp22}), we obtain
 \begin{eqnarray*}
   &&   \left(A_{1,i,1}+c_{1,1}\right)\overline{W}^{(2)}_{1,i,1} \leq
   L_{1,i,1}\overline{W}^{(1)}_{1,i,1}+R_{1,i,1}\overline{W}^{(1)}_{1,i+1,1} \\
   &&+\Gamma_{1,i,1}\left((\overline{U}^{(1)}_{1,i,1},\underline{U}^{(1)}_{2,i,1} )_{GS}\right)-
   \Gamma_{1,i,1}\left((\overline{U}^{(1)}_{1,i,1},\underline{U}^{(1)}_{2,i,1} )_{J}\right) \nonumber \\
   && \quad i=1,2,\ldots,N_{x}-1,\quad W^{(2)}_{1,i,1}=0,\quad i=0,N_{x}.
        \end{eqnarray*}
Taking into account that $(A_{1,i,1}+c_{1,1}I)^{-1}\geq \emph{O}$,
$L_{1,i,1}\geq\emph{O}$, $R_{1,i,1}\geq\emph{O}$, $i=1,2,\ldots,N_{x}-1$,
 and $ \overline{W}^{(1)}_{1,i,1}\leq\mathbf{0}$, by using (\ref{bp44}), we have
\begin{equation*}
  \overline{W}^{(2)}_{1,i,1}\leq\mathbf{0},\quad i=0,1,\ldots,N_{x},
\end{equation*}
where $U$ and $V$ in (\ref{bp44}) are taken in the form
\begin{equation}\label{bp134}
  U=\left((\overline{U}^{(1)}_{1,i,1})_{J}, (\underline{U}^{(1)}_{2,i,1})_{GS} \right),\quad V=\left( (\overline{U}^{(1)}_{1,i,1})_{GS}, (\underline{U}^{(1)}_{2,i,1})_{J}\right).
\end{equation}
By induction on $n$, we can prove that
\begin{equation*}
  \overline{W}^{(n)}_{1,i,1}\leq\mathbf{0},\quad i=0,1,\ldots,N_{x},\quad n\geq1 .
\end{equation*}
Similarly, by using the property $\left(\underline{U}^{(n-1)}_{2,i,m}\right)_{GS}\leq \left(\underline{U}^{(n)}_{2,i,m}\right)_{GS}$
in Theorem \ref{bp97}, we prove that
\begin{equation*}
  \underline{W}^{(n)}_{2,i,1}\geq\mathbf{0},\quad i=0,1,\ldots,N_{x},\quad n\geq1 .
\end{equation*}

On the second time level $m=2$, taking into account that $(A_{1,i,2}+c_{1,2}I)^{-1}\geq \emph{O}$,
$L_{1,i,2}\geq\emph{O}$, $R_{1,i,2}\geq\emph{O}$, $i=1,2,\ldots,N_{x}-1$,
 $  \overline{W}^{(0)}_{1,i,2}=\mathbf{0}$ and $\overline{W}_{1,i,1}\leq\mathbf{0}$,  from (\ref{bp133}), we have
\begin{equation*}
  \overline{W}^{(1)}_{1,i,2}\leq\mathbf{0},\quad i=0,1,\ldots,N_{x}.
\end{equation*}
For $n=2$ in (\ref{bp133}) and using  notation (\ref{bp22}), we obtain
 \begin{eqnarray*}
   &&   \left(A_{1,i,2}+c_{1,2}\right)\overline{W}^{(2)}_{1,i,2} \leq
   L_{1,i,2}\overline{W}^{(1)}_{1,i,2}+R_{1,i,2}\overline{W}^{(1)}_{1,i+1,2} \\
   &&+\Gamma_{1,i,2}\left((\overline{U}^{(1)}_{1,i,2},\underline{U}^{(1)}_{2,i,2} )_{GS}\right)-
   \Gamma_{1,i,2}\left((\overline{U}^{(1)}_{1,i,2},\underline{U}^{(1)}_{2,i,2} )_{J}\right) \nonumber \\
 &&+\tau^{-1} \left((\overline{U}_{1,i,1})_{GS} - (\overline{U}_{1,i,1})_{J}\right),\quad i=1,2,\ldots,N_{x}-1,\\
  &&W^{(2)}_{1,i,1}=0,\quad i=0,N_{x}.
        \end{eqnarray*}
Taking into account that $(A_{1,i,2}+c_{1,2}I)^{-1}\geq \emph{O}$,
$L_{1,i,2}\geq\emph{O}$, $R_{1,i,2}\geq\emph{O}$, $i=1,2,\ldots,N_{x}-1$,
  $ \overline{W}^{(1)}_{1,i,2}\leq\mathbf{0}$ and $\overline{W}_{1,i,1}\leq\mathbf{0}$, by using (\ref{bp44}), we have
\begin{equation*}
  \overline{W}^{(2)}_{1,i,2}\leq\mathbf{0},\quad i=0,1,\ldots,N_{x},
\end{equation*}
where $U$ and $V$ in (\ref{bp44}) are taken similar to (\ref{bp134}) with $m=2$.

By induction on $n$, we  can prove that
\begin{equation*}
  \overline{W}^{(n)}_{1,i,2}\leq\mathbf{0},\quad i=0,1,\ldots,N_{x}.
\end{equation*}
By induction on $m$, we can prove that
\begin{equation*}
  \overline{W}^{(n)}_{1,i,m}\leq\mathbf{0},\quad i=0,1,\ldots,N_{x},\quad m\geq1.
\end{equation*}
By a similar argument, we can prove that
\begin{equation*}
 \underline{W}^{(n)}_{2,i,m}\geq\mathbf{0},\quad i=0,1,\ldots,N_{x},\quad m\geq1.
\end{equation*}
Thus, we prove (\ref{bp131}) for $\{(\overline{U}^{(n)}_{1,i,m})_{J},(\underline{U}^{(n)}_{2,i,m})_{J}\}$ 
  and $\{(\overline{U}^{(n)}_{1,i,m})_{GS},(\underline{U}^{(n)}_{2,i,m})_{GS}\}$. 
By a similar manner,  we can prove (\ref{bp131}) for  $\{(\underline{U}^{(n)}_{1,i,m})_{J},(\overline{U}^{(n)}_{2,i,m})_{J}\}$ and \\
  $\{(\underline{U}^{(n)}_{1,i,m})_{GS},(\overline{U}^{(n)}_{2,i,m})_{GS}\}$.
\end{proof}

\end{document}